\theoremstyle{plain}
\newtheorem{theorem}{Theorem}[section]
\newtheorem{corollary}[theorem]{Corollary}
\newtheorem{proposition}[theorem]{Proposition}
\newtheorem{lemma}[theorem]{Lemma}
\theoremstyle{definition}
\newtheorem{definition}[theorem]{Definition}
\newtheorem{remark}[theorem]{Remark}
\numberwithin{equation}{subsection}
\let\oldgamma\gamma
\renewcommand{\gamma}{\raisebox{\depth}{$\oldgamma$}}
\newcommand{\amod}{{}_{\mathcal{A}}^{\mathrm{pf}}\mathbf{Mod}}
\newcommand{\samod}{{}_{\mathcal{A}}^{\mathrm{ds}}\mathbf{Mod}}
\newcommand{\cfs}{\mathrm{cfs}}
\newcommand{\Supp}{\mathrm{Supp}}
\newcommand{\Blocks}{\mathrm{Blocks}}
\newcommand{\Fact}{\mathrm{Fact}}
\newcommand{\End}{\mathrm{End}}
\newcommand{\Hom}{\mathrm{Hom}}
\newcommand{\Ext}{\mathrm{Ext}^1_\Gamma}
\newcommand{\HC}{\mathrm{HC}}
\newcommand{\SHC}{\mathrm{SHC}}
\newcommand{\fwords}[1]{#1^{\Pi}}
\newcommand{\cfsclasses}{\mathrm{cfs}(\Gamma)/{\sim}}
\title{On the Category of Harish-Chandra Block Modules}
\author{Dylan Fillmore }
\date{\today}
\begin{document}

\maketitle

\begin{abstract}

If $\Gamma$ is a subalgebra of $A$, then an $A$-module is called a \emph{Harish-Chandra module} if it is the direct sum of its generalized weight spaces with respect to $\Gamma$. Drozd, Futorny, and Ovsienko \cite{drozd1994harish} defined a generalization of a central subalgebra called a \emph{Harish-Chandra subalgebra} and showed that when $\Gamma$ is a Harish-Chandra subalgebra of $A$ the structure of Harish-Chandra $A$-modules can be described using information about the relationship between $A$ and the cofinite maximal ideals of $\Gamma$.

We extend the results of \cite{drozd1994harish} by dropping the assumption that $\Gamma$ is quasicommutative. We facilitate this by introducing an equivalence relation $\sim$ on the set $\cfs(\Gamma)$ of cofinite maximal ideals of $\Gamma$. We define \emph{Harish-Chandra block modules} with respect to $\sim$ to be $A$-modules that are the direct sum of so called block spaces corresponding to the equivalence classes $\cfsclasses$. If $\Gamma$ is a \emph{Harish-Chandra block subalgebra} of $A$ with respect to $\sim$, then the structure of Harish-Chandra block modules can be described based on the relationship between $A$ and $\cfsclasses$. In particular, we give a decomposition of the category of Harish-Chandra block modules and the collection of isomorphism classes of irreducible Harish-Chandra block modules. Furthermore, we define a category $\mathcal{A}$ on $\cfsclasses$, and show the category of profinite $\mathcal{A}$-modules is equivalent to the category of Harish-Chandra block modules. Taking $\Gamma$ to be noetherian and quasicommutative, and $\sim$ to be the equality relation, we recover (in fact, a slight refinement of) results from \cite{drozd1994harish}. Lastly, we provide a sufficient condition for when there are a finite number of isoclasses of simple Harish-Chandra block modules with a given support.
\end{abstract}

\tableofcontents

\section{Introduction}
Given an associative algebra $A$, one can approach the study of $A$-modules by restricting to a subcategory of modules which are more manageable with respect to a subalgebra $\Gamma$. A classical example in representation theory is given by taking a Lie algebra $\frak{k}$ which is reductive in a finite-dimensional Lie algebra $\frak{g}$. Letting $A = U(\frak{g})$ and $\Gamma = U(\frak{k})$, a Harish-Chandra module \cite{dixmier1996enveloping} is an $A$-module $V$ which is the sum of its finite-dimensional simple $\Gamma$-submodules. Equivalently, if $\cfs(\Gamma)$ denotes the set of maximal ideals of $\Gamma$ with finite codimension, then $V$ is the sum of the $\Gamma$-weight spaces $V^\frak{m} = \{v \in V: \frak{m}v = 0 \}$ where $\frak{m}$ ranges across $\cfs(\Gamma)$.

When $\frak{g}$ is a reductive Lie algebra with Cartan subalgebra $\frak{h}$, Fernando \cite{fernando1990lie} and Mathieu \cite{mathieu2000classification} studied the $\frak{g}$-modules which decomposed into a direct sum of finite-dimensional weight spaces with respect to $\frak{h}$. They classified the irreducible weight modules of this form. This result has been reproduced in the context of Drinfeld-Jimbo quantum groups $U_q(\mathfrak{g})$ \cite{pedersen2015irreducible1} \cite{pedersen2015irreducible2}.

When $\Gamma$ is an algebra and $A$ is a twisted generalized Weyl algebra with base algebra $\Gamma$, the simple weight modules without inner breaks have been classified in \cite{hartwig2006locally} (see also \cite{hartwig2018noncommutative}).

Work by Lepowsky and McCollum \cite{lepowsky1973determination} encompasses the preceding examples with the framework of an arbitrary algebra $A$ and subalgebra $\Gamma$. If $V$ is an $A$-module, then they relate the action of $A$ on $V^\frak{m}$ to the $\Gamma$-module $A/A\frak{m}$. In particular, if $A/A\frak{m}$ is a $\Gamma$-weight module, then $A V^\frak{m} \subseteq \bigoplus_{\frak{n} \in \cfs(\Gamma)} V^\frak{n}$. Additionally, they give a correspondence between irreducible $A$-modules $V$ with $V^\frak{m} \neq 0$ and irreducible $\big(A/A\frak{m}\big) ^\frak{m}$-modules. (Here $\big(A/A\frak{m}\big) ^\frak{m}$ coincides with the quotient of the normalizer $N_A(A\frak{m})$ by $A\frak{m}$.)

Another motivating example is $A=U(\frak{gl}_n)$ and $\Gamma$ the Gelfand-Tsetlin subalgebra, by definition the subalgebra generated by the centers $Z(\frak{gl}_1), \ldots, Z(\frak{gl}_n)$. Then Gelfand-Tsetlin modules are defined as $\frak{gl}_n$-modules $V$ which are locally finite with respect to $\Gamma$ \cite{drozd1994harish}. Equivalently (since $\Gamma$ is noetherian), $V$ is the direct sum of its generalized $\Gamma$-weight spaces $V(\frak{m}) = \{v \in V: \frak{m}^k v = 0 \text{ for some } k \geq 0\}$. Gelfand-Tsetlin modules have been thoroughly studied over the past decades and generalized to the setting of Galois algebras (see \cite{futorny2018representations} and references therein). Webster \cite{webster2019gelfand} introduced the framework of principal flag orders and used it along with methods of geometric representation theory to give a classification of the irreducible Gelfand-Tsetlin modules.

Drozd, Futorny, and Ovsienko \cite{drozd1994harish} unified these examples under the name Harish-Chandra modules. A Harish-Chandra module is an $A$-module $V$ which is the direct sum of its generalized $\Gamma$-weight spaces. They required that $\Gamma$ is a so called Harish-Chandra subalgebra of $A$, and showed that the action of $a \in A$ on $V(\frak{m})$ depends on the composition factors of $\frac{\Gamma a \Gamma}{\Gamma a \frak{m}}$. Consequently, the category of Harish-Chandra modules and the collection of isomorphism classes of irreducible Harish-Chandra modules can be decomposed. Additionally, the authors of \cite{drozd1994harish} defined a category $\mathcal{A}$ with objects $\cfs(\Gamma)$ and morphisms $\mathcal{A}(\frak{m}, \frak{n}) = \varprojlim \frac{A}{\frak{n}^n A + A \frak{m}^m}$, and established an equivalence between the category of Harish-Chandra modules and the category of discrete $\mathcal{A}$-modules.

We drop the assumption of \cite{drozd1994harish} that $\Gamma$ is quasicommutative. That is, $\Ext$ may be nonzero between simple modules corresponding to distinct ideals in $\cfs(\Gamma)$. Consequently, finite-dimensional $\Gamma$-modules may no longer be the direct sum of their generalized weight spaces. In general, however, if $\sim$ is the strongest equivalence relation on $\cfs(\Gamma)$ such that $\Ext$ is zero between simple modules corresponding to ideals in distinct equivalence classes of $\sim$, then every finite-dimensional $\Gamma$-module $V$ is the direct sum of the block spaces $V(B) = \{v \in V: \frak{m}_1 \cdots \frak{m}_k v = 0 \text{ for some } \frak{m}_1, \ldots, \frak{m}_k \in B\}$ where $B$ ranges across $\cfsclasses$. (In particular, the category of finite-dimensional $\Gamma$-modules decomposes into blocks corresponding to the equivalence classes of $\sim$. See, for example, \cite{humphreys2008representations}). We call an $A$-module $V$ a \emph{Harish-Chandra block module} (with respect to an equivalence relation $\sim$) if $V = \bigoplus_{B \in \cfsclasses} V(B)$, and denote by $\Supp(V)$ the set $\{B \in \cfsclasses: V(B) \neq 0\}$. We denote the full subcategory of Harish-Chandra block modules by $\HC(A; \Gamma, \sim)$ and the collection of isoclasses of irreducible Harish-Chandra block modules by $\mathrm{Irr}(A; \Gamma, \sim)$.

We extend the work of \cite{drozd1994harish} to Harish-Chandra block modules, under the assumption that $\Gamma$ is a \emph{Harish-Chandra block subalgebra} of $A$ (a modification of the notion of Harish-Chandra subalgebra from \cite{drozd1994harish}, see Section \ref{subsec:dfosubalgebras} for details). In this situation, if $\frak{m}_1, \ldots, \frak{m}_k \in B$ and $\frak{m}_1 \cdots \frak{m}_k v = 0$, then $Av \subseteq \bigoplus_{C \in \Supp(A/A\frak{m}_1 \cdots \frak{m}_k)} V(C)$. This suggests a preorder $\prec$ on the blocks of $\cfs(\Gamma)$. We define the equivalence relation $\Delta$ generated by $\prec$ and the equivalence relation $\nabla$ induced by $\prec$. If $\mathcal{D} \subseteq \cfsclasses$, then we let $\HC(A; \Gamma, \sim; \mathcal{D})$ denote the subcategory of Harish-Chandra block modules $V$ with $\Supp \subseteq \mathcal{D}$ and $\mathrm{Irr}(A; \Gamma, \sim; \mathcal{D})$ denote the collection of isoclasses of such simple Harish-Chandra block modules. We prove the following theorem in Section \ref{sec:decomp}, analogous to Corollary 16 of \cite{drozd1994harish}.

\begin{theorem}
Suppose $\Gamma$ is a Harish-Chandra block subalgebra of $A$.
\begin{enumerate}[{\rm (i) }]
    \item $\HC(A; \Gamma, \sim) = \bigoplus_{\mathcal{D} \in (\cfsclasses)/\Delta} \HC(A; \Gamma, \sim; \mathcal{D})$
    \item $\mathrm{Irr}(A; \Gamma, \sim) = \bigsqcup_{\mathcal{D} \in (\cfsclasses)/\nabla} \mathrm{Irr}(A; \Gamma, \sim; \mathcal{D})$
\end{enumerate}
\end{theorem}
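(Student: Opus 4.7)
The plan is to reduce both statements to the propagation result established earlier in the excerpt: if $v \in V(B)$ is annihilated by $\mathfrak{m}_1 \cdots \mathfrak{m}_k$ for some $\mathfrak{m}_i \in B$, then $Av \subseteq \bigoplus_{C \in \Supp(A/A\mathfrak{m}_1 \cdots \mathfrak{m}_k)} V(C)$. By the definition of the preorder $\prec$ on $\cfsclasses$, this is the single containment $A \cdot V(B) \subseteq \bigoplus_{C \prec B} V(C)$, and it drives both (i) and (ii).

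For part (i), fix a $\Delta$-class $\mathcal{D}$ and, for each $V \in \HC(A; \Gamma, \sim)$, set $V_{\mathcal{D}} := \bigoplus_{B \in \mathcal{D}} V(B)$. Since $C \prec B$ forces $B$ and $C$ to lie in the same $\Delta$-class, the propagation containment yields $A \cdot V_{\mathcal{D}} \subseteq V_{\mathcal{D}}$; hence $V_{\mathcal{D}}$ is an $A$-submodule with $\Supp(V_\mathcal{D}) \subseteq \mathcal{D}$, and so it lives in $\HC(A; \Gamma, \sim; \mathcal{D})$. The vector-space identity $V = \bigoplus_{\mathcal{D}} V_{\mathcal{D}}$ is built into the definition of a Harish-Chandra block module. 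To promote this to a decomposition of the full category, I observe that any $A$-linear map $f \colon V \to W$ is in particular $\Gamma$-linear and therefore respects block spaces, $f(V(B)) \subseteq W(B)$; thus $f$ restricts to a map $V_{\mathcal{D}} \to W_{\mathcal{D}}$ for every $\mathcal{D}$ and vanishes on all cross-terms.

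For part (ii), let $V$ be a simple Harish-Chandra block module, choose $B \in \Supp(V)$, and pick a nonzero $v \in V(B)$. Simplicity gives $V = Av \subseteq \bigoplus_{C \prec B} V(C)$, so every $C \in \Supp(V)$ satisfies $C \prec B$. Swapping the roles of $B$ and $C$ in this argument (with a nonzero vector in $V(C)$) shows $B \prec C$ as well, so every pair of blocks in $\Supp(V)$ is mutually $\prec$-related and hence lies in a single $\nabla$-class $\mathcal{D}$. Therefore $V$ belongs to $\mathrm{Irr}(A; \Gamma, \sim; \mathcal{D})$; disjointness of the union is automatic because $\Supp(V) \neq \emptyset$ and the $\nabla$-classes partition $\cfsclasses$.

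The main obstacle, which I expect lives upstream of this theorem rather than inside it, is verifying the propagation containment $A \cdot V(B) \subseteq \bigoplus_{C \prec B} V(C)$ in the Harish-Chandra block setting, which will require unpacking the definition of a Harish-Chandra block subalgebra and relating the action of a product of ideals in $B$ to the block-supports of quotients of the form $A/A\mathfrak{m}_1 \cdots \mathfrak{m}_k$. Once that is in hand, the theorem itself is a largely formal consequence of the contrast between the equivalence closure $\Delta$ of $\prec$, which tracks how different block spaces are glued by the $A$-action and so governs module-level decompositions, and the symmetric part $\nabla$ of $\prec$, which records that a simple module is cyclically generated from any of its block spaces and so governs the support pattern of irreducibles.
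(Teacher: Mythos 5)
Your argument is correct and follows essentially the same route as the paper: you invoke Proposition \ref{prop:blockspan} (the ``propagation'' containment you identify at the start), show that $\Delta$-classes give $A$-submodule decompositions while $\Gamma$-linearity of morphisms makes this a decomposition of the category, and for irreducibles you use cyclicity to show any two blocks in the support are mutually $\prec$-related and hence $\nabla$-equivalent --- which is precisely Corollary 3.8(ii) and (iv) in the paper. The only cosmetic discrepancy is the orientation of $\prec$ and the fact that the paper's generating relations for $\prec$ use single ideals $\mathfrak{m} \in B$ (with the union $\bigcup_i \Supp(A/A\mathfrak{m}_i)$) rather than the product $\Supp(A/A\mathfrak{m}_1 \cdots \mathfrak{m}_k)$, but this does not affect $\Delta$ or $\nabla$ and hence not the theorem.
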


If for each $B \in \cfsclasses$, there are $\frak{m}_1, \ldots, \frak{m}_k \in B$ so that $\frak{m}_1 \cdots \frak{m}_k V(B) = 0$, then we call $V$ a \emph{strong Harish-Chandra block module}. If we assume that $\Gamma$ is a \emph{strong Harish-Chandra block subalgebra} of $A$ (see Section \ref{sec:strong} for definition), then we may define a category $\mathcal{A}$ with objects $\cfsclasses$ and morphisms $\mathcal{A}(B,C) = \varprojlim \frac{A}{\frak{n}_1 \ldots \frak{n}_n A + A \frak{m}_1 \ldots \frak{m}_m}$. We define the categories $\amod$ and $\samod$ of profinite and discrete $\mathcal{A}$-modules, respectively. In Section \ref{sec:category}, we prove the following two equivalences:

\begin{theorem}\label{thm:main2}
If $\Gamma$ is a strong Harish-Chandra block subalgebra of $A$ with respect to $\sim$, then:
\begin{enumerate}[{\rm (i) }]
    \item $\amod \cong \HC(A; \Gamma, \sim)$ \label{thm:main2i}
    \item $\samod \cong \SHC(A; \Gamma, \sim)$ \label{thm:main2ii}
\end{enumerate}
\end{theorem}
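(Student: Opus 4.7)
The plan is to construct functors $F: \HC(A; \Gamma, \sim) \to \amod$ and $G: \amod \to \HC(A; \Gamma, \sim)$ that are quasi-inverse, adapting the construction of \cite{drozd1994harish} from the ideal setting to the block setting. Part (ii) should then follow by restricting $F$ and $G$, after verifying that strongness on the module side corresponds exactly to discreteness on the $\mathcal{A}$-module side.

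On objects, set $F(V)(B) = V(B)$ and $G(M) = \bigoplus_{B} M(B)$. For $F$, given $v \in V(B)$ with $\frak{m}_1 \cdots \frak{m}_m v = 0$ and any $a \in A$, the strong Harish-Chandra block subalgebra hypothesis combined with the results of Section \ref{sec:decomp} should ensure that $av$ has finite block support and that, for each $C$ in this support, there exist $\frak{n}_1, \ldots, \frak{n}_n \in C$ with $\frak{n}_1 \cdots \frak{n}_n (av)_C = 0$. This yields, for each pair $(B, C)$, a compatible family of linear maps $A/(\frak{n}_1 \cdots \frak{n}_n A + A \frak{m}_1 \cdots \frak{m}_m) \to \Hom(V(B), V(C))$ which pass through the inverse limit to define the action of $\mathcal{A}(B, C)$, making $F(V)$ profinite by construction. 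For $G$, the $A$-action on $v \in M(B)$ is assembled as $a \cdot v = \sum_{C} \pi_{BC}(a) \cdot v$, where $\pi_{BC}: A \to \mathcal{A}(B, C)$ is the canonical projection; profiniteness of $M(B)$ ensures only finitely many summands are nonzero, so $G(M)$ lies in $\HC(A; \Gamma, \sim)$, with associativity of the $A$-action following from functoriality of composition in $\mathcal{A}$.

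For the natural isomorphisms, $G \circ F \cong \mathrm{id}$ is the canonical block decomposition enjoyed by any Harish-Chandra block module, while $F \circ G \cong \mathrm{id}$ is immediate from the object-level definitions and the fact that the $\mathcal{A}$-action on $M$ is recoverable from the $A$-action on $G(M)$ via $\pi_{BC}$. For (ii), $V$ is strong exactly when each $V(B)$ is killed by a single product $\frak{m}_1 \cdots \frak{m}_k$ with $\frak{m}_i \in B$; this corresponds on the $\mathcal{A}$-side to each element of $F(V)(B)$ being annihilated by an open substructure uniformly across the inverse system, which is precisely the discreteness condition defining $\samod$.

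I expect the main obstacle to be the well-definedness of the $\mathcal{A}$-action in $F$: one must verify that the maps arising from different annihilator depths $(m, n)$ are compatible with the inverse system defining $\mathcal{A}(B, C)$, and that composition in $\mathcal{A}$ matches multiplication in $A$. These compatibilities should follow from the two-sided quotient structure built into the definition of a strong Harish-Chandra block subalgebra, which is designed precisely so that left multiplication by $A$ descends coherently to quotients by ideals on both sides, and from a careful cofinality argument comparing annihilating products of different lengths in the inverse limits defining morphisms of $\mathcal{A}$.
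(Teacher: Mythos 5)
Your outline matches the paper's architecture: both directions proceed through the functor $\mathcal{H}(F) = \bigoplus_B F(B)$ with $A$-action $a.v = \sum_C F_{B,C}(a)v$, and a construction $V \mapsto \mathcal{V}$ with $\mathcal{V}(B) = V(B)$ to establish essential surjectivity, with (i) and (ii) handled together by tracking the two enrichment topologies.

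However, your treatment of the direction $G : \amod \to \HC(A;\Gamma,\sim)$ is both vaguer than it should be and slightly wrong at the key step, and you misplace the announced ``main obstacle'' onto $F$ rather than here. You assert that ``profiniteness of $M(B)$ ensures only finitely many summands are nonzero,'' but profiniteness alone does not give this. For a fixed $v \in F(B)$, profiniteness only supplies, for each \emph{pair} $(B,C)$, ideals $\frak{m} = \frak{m}(C) \in \fwords{B}$ and $\frak{n} \in \fwords{C}$ through which $F_{B,C}$ factors on $\Bbbk v$; the $\frak{m}$ is a priori allowed to deepen with $C$, leaving no control over which $F_{B,C}(a)v$ vanish. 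The paper's crucial move is to observe that $\Hom_\Bbbk(\Gamma v, F(C))$ is a $(\Gamma,\Gamma)$-bimodule annihilated on the right by a single $\frak{m}' \in \fwords{B}$ with $\frak{m}'v = 0$, so the factorization for \emph{every} $C$ can be upgraded to pass through $A/(\frak{n}A + A\frak{m}')$ with $\frak{m}'$ independent of $C$; only after this uniformization does the strong Harish-Chandra block subalgebra hypothesis (that $A/A\frak{m}'$ is a block module) deliver finiteness of $\{C : F_{B,C}(a)v \neq 0\}$. Without this step your sum $a.v = \sum_C \pi_{BC}(a).v$ is not visibly finite and $G$ is not visibly well-defined. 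Likewise, associativity of the $A$-action is not ``immediate from functoriality of composition'': it requires matching the composition of $\mathcal{A}$ (which is defined by choosing representatives lying in specific block components on each side) against the literal product $ba$ in $A$, decomposing $a + A\frak{m}$ and $b + \frak{l}A$ into their block supports and checking that the cross-terms $b_j a_i$ with $i \neq j$ vanish modulo $\frak{l}A + A\frak{m}$ --- a computation, not a formality.
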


As a special case of our work, we recover the corresponding results concerning the Harish-Chandra modules of \cite{drozd1994harish}. Additionally, our notion of strong Harish-Chandra block modules allows us to clarify ambiguities of \cite{drozd1994harish} raised by the possibility of infinite-dimensional generalized weight spaces. In particular, through Theorem \ref{thm:main2}(\ref{thm:main2ii}) we amend the category equivalence given in \cite{drozd1994harish} by replacing the category of Harish-Chandra modules with the category of strong Harish-Chandra modules. Theorem \ref{thm:main2}(\ref{thm:main2i}) instead states that the category of Harish-Chandra modules (with possibly infinite-dimensional generalized weight spaces) is equivalent to the category of profinite $\mathcal{A}$-modules.

In Section \ref{sec:irr}, we prove the following theorem, analogous to Corollary 19 of \cite{drozd1994harish}.

\begin{theorem}\label{thm:irreducibles}
    Let $\Gamma$ be a strong Harish-Chandra block subalgebra of $A$ with respect to $\sim$, and let $B \in \cfsclasses$. Suppose $B$ is finite, $\Gamma$ is noetherian, and $\mathcal{A}(B,B)$ is finitely generated as a left and right $\hat{\Gamma}_B$-module. Then:
    \begin{enumerate}[{\rm (i) }]
        \item There are finitely many isoclasses of simple Harish-Chandra block modules $V$ with $B \in \Supp(V)$.
        \item If $V$ is a simple Harish-Chandra block module $V$, then $V(B)$ is finite-dimensional.
    \end{enumerate}
\end{theorem}

Throughout, we illustrate with an upper triangular algebra of two copies of $U(\frak{sl}_2)$ glued along a one-dimensional bimodule.

\section{Preliminaries}

Let $\Bbbk$ be a field and $\Gamma$ a $\Bbbk$-algebra. We denote by $\cfs(\Gamma)$ the set of maximal two-sided ideals $\frak{m}$ of $\Gamma$ such that $\Gamma / \frak{m}$ is finite-dimensional. For $\frak{m} \in \cfs(\Gamma)$, $\Gamma / \frak{m}$ is isomorphic to a matrix algebra over a finite-dimensional division algebra, by the Artin-Wedderburn Theorem. In particular, there is a unique simple $\Gamma / \frak{m}$-module denoted $S_\frak{m}$. Given a finite-dimensional $\Gamma$-module $V$, we denote by $\Fact(V)$, the set of composition factors of $V$. By the following lemma, $\Fact(V)$ corresponds to a subset of $\cfs(\Gamma)$.

\begin{lemma}\label{Jacobson}
If $V$ is a finite-dimensional $\Gamma$-module, then $\Fact(V) \subseteq \{S_\frak{m}: \frak{m} \in \cfs(\Gamma)\}$.
\end{lemma}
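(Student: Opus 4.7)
The plan is to reduce the statement to a single simple module and then identify a suitable maximal two-sided ideal using the Jacobson density theorem (as the label of the lemma hints). Given a composition factor $S$ of $V$, we have that $S$ is a simple subquotient of the finite-dimensional module $V$, hence $S$ itself is finite-dimensional. So it suffices to show that any finite-dimensional simple $\Gamma$-module $S$ is isomorphic to $S_\frak{m}$ for some $\frak{m} \in \cfs(\Gamma)$.

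The natural candidate for $\frak{m}$ is the annihilator $\frak{m} := \mathrm{Ann}_\Gamma(S)$, which is a two-sided ideal of $\Gamma$. First I would observe that the action of $\Gamma$ on $S$ descends to a faithful action of $\Gamma/\frak{m}$, giving an algebra embedding $\Gamma/\frak{m} \hookrightarrow \End_\Bbbk(S)$. Since $S$ is finite-dimensional, $\End_\Bbbk(S)$ is finite-dimensional, and therefore so is $\Gamma/\frak{m}$. This already yields that $\frak{m}$ has finite codimension, once I know it is maximal.

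Next I would verify that $\frak{m}$ is a maximal two-sided ideal. By the Jacobson density theorem, since $\Gamma/\frak{m}$ acts faithfully on the simple module $S$ and $\End_{\Gamma}(S)$ is a finite-dimensional division algebra $D$ over $\Bbbk$ (this is Schur's lemma together with finite-dimensionality, which ensures the endomorphisms are algebraic over $\Bbbk$), the image of $\Gamma/\frak{m}$ in $\End_D(S)$ is dense. Since $\End_D(S)$ is finite-dimensional, density forces $\Gamma/\frak{m} \cong \End_D(S)$, a matrix algebra over $D$. In particular $\Gamma/\frak{m}$ is simple Artinian, so $\frak{m}$ is a maximal two-sided ideal, and we conclude $\frak{m} \in \cfs(\Gamma)$.

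Finally, because $\Gamma/\frak{m}$ is a simple Artinian algebra, it has (up to isomorphism) a unique simple module, which by definition is $S_\frak{m}$. Since $S$ is a simple $\Gamma/\frak{m}$-module, this forces $S \cong S_\frak{m}$, completing the proof. I do not anticipate a genuine obstacle here; the only subtle point is invoking the Jacobson density theorem correctly, which requires identifying $\End_\Gamma(S)$ as a finite-dimensional division algebra over $\Bbbk$ so that density in $\End_D(S)$ is equivalent to surjectivity.
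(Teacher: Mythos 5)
Your proposal is correct and follows essentially the same route as the paper: take $\frak{m} = \mathrm{Ann}_\Gamma(S)$, invoke the Jacobson density theorem to identify $\Gamma/\frak{m}$ with $\End_D(S)$ for $D = \End_\Gamma(S)$, and conclude that $\frak{m} \in \cfs(\Gamma)$ with $S \cong S_\frak{m}$. The only difference is that you spell out the maximality of $\frak{m}$ and the finite-dimensionality of $D$ a bit more explicitly, which the paper leaves implicit.
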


\begin{proof}
If $S$ is a simple finite-dimensional $\Gamma$-module, then the Jacobson Density Theorem implies that $\Gamma \rightarrow \End_D(S)$ is surjective (where $D = \End_\Gamma(S)$). So $\Gamma / \mathrm{Ann}(S)$ is isomorphic to the simple $\Bbbk$-algebra $\End_D(S)$ which is finite-dimensional since it is a subalgebra of the finite-dimensional algebra $\End_\Bbbk(S)$. Consequently, $\mathrm{Ann}(S) \in \cfs(\Gamma)$ with $\mathrm{Ann}(S)S = 0$. So $S$ is isomorphic to the unique simple $\Gamma / \mathrm{Ann}(S)$-module.

Now, of course, any composition factor of $V$ is a simple finite-dimensional $\Gamma$-module.
\end{proof}

\subsection{Ext}
In this section, we review some facts about $\mathrm{Ext}$ groups.

Let $U, W$ be $\Gamma$-modules. Recall that $\Ext(U, W) = 0$ if and only if every short exact sequence $0 \rightarrow W \rightarrow V \rightarrow U \rightarrow 0$ of $\Gamma$-modules splits.

\begin{lemma} \label{remainderlemma}
    Let $\frak{m}, \frak{n} \in \cfs(\Gamma)$ be distinct ideals and let $W$ be a $\Gamma / \frak{m}$-module and $U$ be a $\Gamma / \frak{n}$-module. If $\frak{m}\frak{n} = \frak{m} \cap \frak{n}$, then $\Ext(U, W) = 0$.
\end{lemma}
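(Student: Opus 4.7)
The plan is to fix a short exact sequence $0 \to W \to V \to U \to 0$ of $\Gamma$-modules and construct a splitting directly from the hypothesis $\frak{m}\frak{n} = \frak{m}\cap\frak{n}$ together with the Chinese Remainder Theorem. The first step is the observation that, since $\frak{n}$ kills $U = V/W$, we have $\frak{n}V \subseteq W$; combined with $\frak{m}W = 0$ this gives $\frak{m}\frak{n}V = 0$. By hypothesis then $(\frak{m}\cap\frak{n})V = 0$, so $V$ is a module over $\bar\Gamma := \Gamma/(\frak{m}\cap\frak{n})$.

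Next, because $\frak{m}$ and $\frak{n}$ are distinct maximal two-sided ideals, maximality forces $\frak{m}+\frak{n} = \Gamma$, and the noncommutative Chinese Remainder Theorem yields an isomorphism of $\Bbbk$-algebras $\bar\Gamma \cong \Gamma/\frak{m} \times \Gamma/\frak{n}$. Let $e,f \in \bar\Gamma$ denote the orthogonal central idempotents corresponding to $(1,0)$ and $(0,1)$. Any lift of $e$ to $\Gamma$ lies in $\frak{n}$, and any lift of $f$ lies in $\frak{m}$. Using the decomposition $1 = e + f$ I obtain $V = eV \oplus fV$ as $\bar\Gamma$-modules.

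The final step is to identify $W = eV$ and check that $fV \to U$ is an isomorphism. Since $fW = 0$ (as $f$ lifts to $\frak{m}$), $W \subseteq eV$; since $e$ acts as zero on $U$, the image of $eV$ in $U$ vanishes, so $eV \subseteq W$. Hence $W = eV$. Then the restriction of the quotient map to $fV$ is injective (because $fV \cap W = fV \cap eV = 0$) and surjective (for any $\bar v \in U$ we have $\bar v = f\bar v$, so any lift $v$ satisfies $\overline{fv} = \bar v$), producing the desired direct-sum decomposition $V = W \oplus fV$.

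I do not expect a serious obstacle here: the content is essentially Chinese Remainder applied once one has reduced to a module over $\bar\Gamma$. The only mild subtlety is keeping track of which idempotent kills which side, i.e.\ verifying that the summand annihilated by $\frak{n}$ (namely $eV$) is the one that equals $W$, rather than the reverse; this is resolved by lifting $e \in \frak{n}$ and $f \in \frak{m}$ as above.
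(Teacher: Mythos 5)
Your proof is correct and follows essentially the same route as the paper's: reduce to a module over $\Gamma/(\frak{m}\cap\frak{n})$, apply the Chinese Remainder Theorem to split $V$ into its $\Gamma/\frak{m}$- and $\Gamma/\frak{n}$-parts, and identify $W$ with the $\Gamma/\frak{m}$-summand. The only cosmetic difference is that you phrase the decomposition via the central idempotents $e,f$ rather than via the direct-sum projections $\pi_1,\pi_2$; the underlying argument is identical.
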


\begin{proof}
    Let $0 \rightarrow W \xrightarrow{f} V \xrightarrow{g} U \rightarrow 0$ be a short exact sequence. Then $g(\frak{n}V) = \frak{n}g(V) = 0$, so $\frak{n}V \subseteq f(W)$. Hence $\frak{m}\frak{n}V \subseteq \frak{m}f(W) = f(\frak{m}W) = 0$. So $V$ is a $\Gamma / \frak{m}\frak{n}$-module. By assumption $\Gamma / \frak{m} \frak{n} = \Gamma / \frak{m} \cap \frak{n}$ which is isomorphic to $\Gamma/ \frak{m} \times \Gamma / \frak{n}$ by the remainder theorem. So $V = V_1 \oplus V_2$ where $\frak{m} V_1 = 0$ and $\frak{n} V_2 = 0$.

    Now $V_1 = \frak{m}V_1 + \frak{n}V_1 = \frak{n}V_1$, so $V_1 \subseteq \ker(g) = f(W)$. Similarly, $W = \frak{n}W$ gives $\pi_2(f(W)) = \frak{n}\pi_2(f(W)) = 0$. So $V_1 = f(W)$. The sequence is split by the retraction $h^{-1} \circ \pi_1$ where $h: W \rightarrow V_1$ such that $f = \iota_1 \circ h$.
\end{proof}

The following result is well-known, but we provide a proof for convenience.

\begin{lemma}\label{extlemma}
Let $M, N$ be finite-dimensional $\Gamma$-modules with $\Ext(S , T) = 0$ for all $S \in \Fact(M)$ and $T \in \Fact(N)$. Then $\Ext(M, N) = 0$.
\end{lemma}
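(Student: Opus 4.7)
The plan is to prove the lemma by induction on the composition lengths of $M$ and $N$, using the long exact sequence of $\mathrm{Ext}$ to reduce to the case where both modules are simple, where the conclusion is immediate from the hypothesis.

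First I would fix $M$ and induct on the length $\ell(N)$. If $\ell(N) = 1$, then $N$ is simple, so $N \in \Fact(N)$; I then induct on $\ell(M)$. If $\ell(M) = 1$, then $M$ is simple with $M \in \Fact(M)$, and the hypothesis gives $\Ext(M,N) = 0$ directly. Otherwise, pick any simple submodule $S \subseteq M$ (necessarily $S \in \Fact(M)$), and form the short exact sequence $0 \to S \to M \to M/S \to 0$. The associated long exact sequence in $\mathrm{Ext}^\bullet(-,N)$ contains the piece
\[
\Ext(M/S, N) \to \Ext(M, N) \to \Ext(S, N).
\]
Since $\Fact(M/S) \subseteq \Fact(M)$, the inductive hypothesis applied to both $M/S$ and $S$ (of strictly smaller lengths) yields vanishing of the outer terms, hence $\Ext(M,N) = 0$.

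For the inductive step on $N$ (with $M$ now arbitrary but fixed), pick any simple quotient $T$ of $N$, giving $0 \to N' \to N \to T \to 0$ with $T \in \Fact(N)$ and $\Fact(N') \subseteq \Fact(N)$. The long exact sequence in $\mathrm{Ext}^\bullet(M,-)$ contains
\[
\Ext(M, N') \to \Ext(M, N) \to \Ext(M, T).
\]
The right term vanishes by the case $\ell(N) = 1$ already handled, and the left term vanishes by the inductive hypothesis on $\ell(N')$, so $\Ext(M,N) = 0$.

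There is no real obstacle here; the only thing to verify carefully is that $\Fact(M/S) \subseteq \Fact(M)$ and $\Fact(N') \subseteq \Fact(N)$ (immediate from concatenating composition series), and that the displayed three-term sequences are exact, which is a standard consequence of the long exact sequence of $\mathrm{Ext}$ applied to a short exact sequence in either variable.
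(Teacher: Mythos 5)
Your argument is correct and takes essentially the same approach as the paper: a double induction on composition lengths, applying the long exact sequence of $\Ext$ to a short exact sequence involving a simple subquotient to make both lengths drop. The only cosmetic differences are that you peel off a simple quotient of $N$ where the paper uses a simple submodule, and you nest the two inductions in the opposite order; neither change affects the substance.
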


\begin{proof}
We prove the lemma by induction on the lengths of $M$ and $N$. Suppose first that $M$ is simple. Let $N_1$ be a simple submodule of $N$. The short exact sequence $0 \rightarrow N_1 \rightarrow N \rightarrow N/N_1 \rightarrow 0$ yields a long exact sequence. In particular, we have that $\Ext(M, N_1) \rightarrow \Ext(M,N) \rightarrow \Ext(M, N/N_1)$ is exact. By assumption, $\Ext(M, N_1) = 0$. By induction, $\Ext(M, N/N_1) = 0$. Hence, $\Ext(M,N) = 0$.

Now if $M$ is not simple, let $M_1$ be a simple submodule of $M$. The short exact sequence $0 \rightarrow M_1 \rightarrow M \rightarrow M/M_1 \rightarrow 0$ yields a long exact sequence. In particular, we have that $\Ext(M/M_1, N) \rightarrow \Ext(M, N) \rightarrow \Ext(M_1, N)$ is exact. By the previous paragraph, $\Ext(M_1, N) = 0$.  By induction, $\Ext(M/M_1, N) = 0$. Hence $\Ext(M,N) = 0$.
\end{proof}

\subsection{Block Modules}
In this section, we extend the notion of generalized weight modules to a situation which allows for a not necessarily quasicommutative $\Gamma$.

Let $\sim$ be an equivalence relation on $\cfs(\Gamma)$. We denote the set of equivalence classes of $\sim$ by $\cfsclasses$ and the equivalence class of $\frak{m} \in \cfs(\Gamma)$ by $[\frak{m}]_\sim$. It is sufficient for our purposes to take $\sim$ to be an arbitrary equivalence relation on $\cfs(\Gamma)$, but Proposition \ref{prop:extmodule} shows it is natural to take $\sim$ to be the equivalence relation generated by $\frak{m} \sim \frak{n}$ when $\Ext(S_\frak{m}, S_\frak{n}) \neq 0$. We will refer to this relation as the $\mathrm{Ext}$ relation.

Given $B \in \cfsclasses$, let \begin{equation}\fwords{B} = \{\frak{m}_1 \ldots \frak{m}_k: k \geq 0 \text{ and } \frak{m}_1, \ldots, \frak{m}_k \in B\}.\end{equation} If $V$ is a $\Gamma$-module and $B \in \cfsclasses$, then the corresponding \emph{block space} is the $\Gamma$-submodule defined by \begin{equation}V(B) = \{v \in V: \frak{m} v = 0 \text{ for some } \frak{m} \in \fwords{B}\}.\end{equation} In fact, the sum $\bigoplus_{B \in \cfsclasses} V(B)$ is direct. We define the \emph{support} of $V$ to be the set \begin{equation}\Supp(V) = \{B \in \cfsclasses : V(B) \neq 0\}.\end{equation}

\begin{definition}
Let $V$ be a $\Gamma$-module. If $V = \bigoplus_{B \in \cfsclasses} V(B)$, then we call $V$ a \emph{block module} (with respect to $\sim$). We denote the projections and inclusions by $\pi_B$ and $\iota_B$, respectively. Let $\mathrm{Bl}(\Gamma, \sim)$ be the full subcategory of all block modules.
\end{definition}

We will discuss maps between Harish-Chandra block modules later, so for now we make note of the following proposition.

\begin{proposition}\label{prop:blockmaps}
If $V$ is a block module, any $\Gamma$-module map $f: V \rightarrow W$ maps into the submodule $\bigoplus_{B \in \Supp(V)} W(B)$. Furthermore, if $W$ is also a block module, then $f: V \rightarrow W$ is of the form $\bigoplus_{B \in \cfsclasses} f_B$ where $f_B: V(B) \rightarrow W(B)$ is $\pi_B \circ f \circ \iota_B$.
\end{proposition}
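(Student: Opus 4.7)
The proof is essentially an annihilator-chasing argument, keyed on the fact that each block space $V(B)$ is defined purely by the annihilation condition $\frak{m}v = 0$ for some $\frak{m} \in \fwords{B}$.

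My plan for the first assertion is to show the pointwise statement $f(V(B)) \subseteq W(B)$ for every $B \in \cfsclasses$. Given $v \in V(B)$, choose $\frak{m}_1, \ldots, \frak{m}_k \in B$ with $\frak{m}_1 \cdots \frak{m}_k v = 0$. Since $f$ is a $\Gamma$-module map, $\frak{m}_1 \cdots \frak{m}_k f(v) = f(\frak{m}_1 \cdots \frak{m}_k v) = 0$, so $f(v) \in W(B)$ by definition of the block space. Because $V$ is a block module, $V = \bigoplus_{B \in \Supp(V)} V(B)$, and applying $f$ termwise yields $f(V) \subseteq \sum_{B \in \Supp(V)} W(B)$. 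The sum on the right is direct by the parenthetical remark preceding the definition of block modules (the sum of all block spaces of any $\Gamma$-module is direct), which finishes the first claim.

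For the second assertion, I assume additionally that $W$ is a block module, so $W = \bigoplus_{B \in \cfsclasses} W(B)$ and the projections $\pi_B : W \to W(B)$ and inclusions $\iota_B : W(B) \to W$ are well-defined on all of $W$. Decompose an arbitrary $v \in V$ as $v = \sum_B \iota_B(v_B)$ with $v_B \in V(B)$. By the first part, $f(\iota_B(v_B)) \in W(B)$, so $\pi_C(f(\iota_B(v_B))) = \delta_{BC} f(\iota_B(v_B))$. Then
\begin{equation*}
f(v) = \sum_B f(\iota_B(v_B)) = \sum_B \iota_B\bigl(\pi_B(f(\iota_B(v_B)))\bigr) = \sum_B \iota_B(f_B(v_B)),
\end{equation*}
which is exactly the formula $f = \bigoplus_B f_B$.

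There is no real obstacle here; the only subtle point is to ensure that the block-space-preserving property $f(V(B)) \subseteq W(B)$ is valid even when $W$ itself is not a block module, which is why the formulation of the first sentence speaks of the sum $\bigoplus_{B \in \Supp(V)} W(B)$ rather than of $W$ itself. Once this is handled, the decomposition in the second sentence is a direct consequence of the uniqueness of block decompositions in $W$.
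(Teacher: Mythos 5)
Your proof is correct and takes essentially the same route as the paper's: the key step is observing that if $\frak{m} \in \fwords{B}$ kills $v$, then $\Gamma$-linearity forces $\frak{m}$ to kill $f(v)$, so $f(v) \in W(B)$. The paper simply states "The rest is clear" for the second assertion, which you have written out in full; the arguments are otherwise identical.
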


\begin{proof}
    Suppose $V$ is a block module, and $f:V \rightarrow W$ is a $\Gamma$-module map. If $v \in V(B)$, then there is $\frak{m} \in \fwords{B}$ such that $\frak{m} v = 0$. So $\frak{m} f(v) = f(\frak{m} v) = f(0) = 0$. So $f(v) \in W(B)$. The rest is clear.
\end{proof}

Throughout this paper, we will often need to refer to the following proposition, which shows that submodules and quotients of block modules are also block modules


\begin{proposition}\label{SES}
If $0 \rightarrow U \rightarrow V \rightarrow W \rightarrow 0$ is a short exact sequence of $\Gamma$-modules, and $V$ is a block module, then $U$ and $W$ are block modules with $\Supp(V) = \Supp(U) \cup \Supp(W)$.
\end{proposition}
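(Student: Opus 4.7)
The plan is to handle the submodule $U$ via a Chinese Remainder Theorem argument, and then deduce the quotient case from that.

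The key preliminary fact is that for distinct blocks $B \neq B'$ in $\cfsclasses$, any $\mathfrak{m} \in \fwords{B}$ and $\mathfrak{m}' \in \fwords{B'}$ are coprime, i.e.\ $\mathfrak{m} + \mathfrak{m}' = \Gamma$. Writing $\mathfrak{m} = \mathfrak{n}_1 \cdots \mathfrak{n}_k$ and $\mathfrak{m}' = \mathfrak{n}'_1 \cdots \mathfrak{n}'_\ell$, the factors $\mathfrak{n}_i$ and $\mathfrak{n}'_j$ are distinct maximal ideals (they lie in different equivalence classes), hence coprime by maximality, and coprimality of two-sided ideals is preserved under products via $(I+J)(I+K) \subseteq I + JK$. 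A byproduct is that any element annihilated simultaneously by some $\mathfrak{m}$ and $\mathfrak{m}'$ must be zero (write $1 = \mu + \mu'$), so block spaces for distinct blocks intersect trivially in any $\Gamma$-module; in particular $U \cap V(B) = U(B)$, and the sum $\sum_B U(B)$ inside $U$ is direct.

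For the submodule $U$: given $u \in U$, write $u = \sum_{i=1}^n v_{B_i}$ with $v_{B_i} \in V(B_i)$ for distinct $B_i$, and pick $\mathfrak{m}_i \in \fwords{B_i}$ annihilating $v_{B_i}$. These ideals are pairwise coprime, so by CRT the map $\Gamma \to \prod_i \Gamma/\mathfrak{m}_i$ is surjective, and we may choose $e_i \in \Gamma$ with $e_i \equiv 1 \pmod{\mathfrak{m}_i}$ and $e_i \equiv 0 \pmod{\mathfrak{m}_j}$ for $j \neq i$. Then $e_i u = v_{B_i}$, exhibiting each $v_{B_i}$ as an element of $U$, and hence $U = \bigoplus_B U(B)$.

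For the quotient $W = V/U$: since $U = \bigoplus_B U(B)$ sits componentwise in $V = \bigoplus_B V(B)$, we have $W \cong \bigoplus_B V(B)/U(B)$, with each summand contained in $W(B)$. Conversely, lift $\bar w \in W(B)$ to $w = \sum_{B'} v_{B'} \in V$; the condition $\mathfrak{m} \bar w = 0$ for some $\mathfrak{m} \in \fwords{B}$ gives $\mathfrak{m} w \subseteq U$, and by the block property of $U$ each component $\mathfrak{m} v_{B'}$ lies in $U(B')$. For $B' \neq B$, writing $1 = \mu + \mu'$ with $\mu \in \mathfrak{m}$ and $\mu'$ in an annihilator $\mathfrak{m}' \in \fwords{B'}$ of $v_{B'}$ yields $v_{B'} = \mu v_{B'} \in U$. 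Hence $w \equiv v_B \pmod U$, so $\bar w \in V(B)/U(B)$, which establishes $W(B) = V(B)/U(B)$ and the block decomposition of $W$. The support equality $\Supp(V) = \Supp(U) \cup \Supp(W)$ is then immediate from the fact that $V(B) \neq 0$ iff $U(B) \neq 0$ or $V(B)/U(B) \neq 0$. The main conceptual step is getting the CRT/coprimality argument right for two-sided ideals in the possibly noncommutative $\Gamma$; after that, the rest is a routine chase of components.
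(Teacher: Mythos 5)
Your proof is correct and rests on the same central idea as the paper's: for $u \in U$, coprimality of products of ideals coming from distinct blocks yields elements $e_j \in \Gamma$ with $e_j u = u_j$, showing that each component of $u$ lies in $U$ (the paper's $\mu_j$, taken from $\mathfrak{m}_j + \prod_{i \neq j}\mathfrak{m}_i = \Gamma$, is exactly your $e_j$). The one point of departure is the quotient: the paper disposes of $W$ in a single line by applying Proposition \ref{prop:blockmaps} to the surjection $V \twoheadrightarrow W$, whereas you rederive it by a component chase using the block structure you just established for $U$. That route is slightly longer, but it has the side benefit of explicitly exhibiting $W(B) = V(B)/U(B)$, which cleanly yields the full equality $\Supp(V) = \Supp(U) \cup \Supp(W)$; the paper only records the inclusions $\Supp(U), \Supp(W) \subseteq \Supp(V)$ and leaves the reverse inclusion implicit. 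You are also more explicit than the paper in justifying that distinct blocks give coprime elements of $\fwords{B}$ and $\fwords{B'}$ via $(I+J)(I+K) \subseteq I + JK$, a fact the paper uses without comment.
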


\begin{proof}
Let $0 \rightarrow U \rightarrow V \rightarrow W \rightarrow 0$ be a short exact sequence of $\Gamma$-modules, where $V$ is a block module. The surjectivity of $V \twoheadrightarrow W$ implies that $W$ is a block module with $\Supp(W) \subseteq \Supp(V)$, by Proposition \ref{prop:blockmaps}.

Let $u \in U$ (we assume that $U$ is a submodule of $V$). Then $u = u_1 + \ldots + u_k$ with $u_i \in V(B_i)$ for some $B_i \in \cfsclasses$. Let $\frak{m}_i \in \fwords{B_i}$ such that $\frak{m}_i u_i = 0$. Now given $j \in [k]$, $\frak{m}_j + \prod_{i \neq j} \frak{m}_i = \Gamma$, so there is $\gamma_j \in \frak{m}_j$ and $\mu_j \in \prod_{i \neq j} \frak{m}_i$ so that $\gamma_j + \mu_j = 1$. So $u_j = \mu_j u_j = \mu_j (u - \sum_{i \neq j} u_i) = \mu_j u \in U$. So each $u_i \in U$, and $U$ is a block module with $\Supp(U) \subseteq \Supp(V)$.
\end{proof}

We now have the tools required to prove Proposition \ref{prop:extmodule}, which indicates that a natural choice of $\sim$ is the $\mathrm{Ext}$ relation.

\begin{proposition}\label{prop:extmodule}
    Let $\sim$ be the equivalence relation generated by $\frak{m} \sim \frak{n}$ when $\Ext(S_\frak{m}, S_\frak{n}) \neq 0$. Let $V$ be a finite-dimensional $\Gamma$-module. Then $V$ is a block module with respect to $\sim$.
\end{proposition}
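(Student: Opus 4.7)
The plan is to induct on the composition length of $V$. The base case is immediate: if $V$ is simple and finite-dimensional, then Lemma \ref{Jacobson} gives $V \cong S_{\frak{m}}$ for some $\frak{m} \in \cfs(\Gamma)$, hence $\frak{m}V = 0$ and $V = V([\frak{m}]_\sim)$.

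For the inductive step, I would pick a simple submodule $U \subseteq V$, which by Lemma \ref{Jacobson} satisfies $U \cong S_{\frak{m}}$ for some $\frak{m} \in \cfs(\Gamma)$. Let $\pi: V \twoheadrightarrow W := V/U$ be the quotient; by inductive hypothesis $W$ is a block module, and I split off the $[\frak{m}]_\sim$-block: $W = W([\frak{m}]_\sim) \oplus W^c$ where $W^c = \bigoplus_{B \neq [\frak{m}]_\sim} W(B)$. Set $V_0 = \pi^{-1}(W([\frak{m}]_\sim))$ and $V' = \pi^{-1}(W^c)$, so $V = V_0 + V'$ with $V_0 \cap V' = U$.

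The heart of the argument, and the step I expect to be the main obstacle, is showing that the short exact sequence $0 \to U \to V' \to W^c \to 0$ splits. By Proposition \ref{SES} and Lemma \ref{Jacobson}, every composition factor of $W^c$ is of the form $S_{\frak{n}}$ for some $\frak{n} \in \cfs(\Gamma)$ lying in a block different from $[\frak{m}]_\sim$, hence $\frak{n} \not\sim \frak{m}$. By the very definition of $\sim$ as the equivalence relation generated by nonvanishing $\Ext$, this forces $\Ext(S_{\frak{n}}, S_{\frak{m}}) = 0$ for all such $\frak{n}$. Lemma \ref{extlemma} then yields $\Ext(W^c, U) = 0$, so the sequence splits and $V' = U \oplus V_1$ with $V_1 \cong W^c$.

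It remains to assemble the decomposition. A routine check gives $V_0 \cap V_1 = 0$ (since $V_1 \cap U = 0$ and $V_0 \cap V' = U$) and $V_0 + V_1 = V$, so $V = V_0 \oplus V_1$. For $V_0$, the composition factors are $S_{\frak{m}}$ together with those of $W([\frak{m}]_\sim)$, all with annihilator in $[\frak{m}]_\sim$; taking a composition series $0 = V_0^0 \subset \cdots \subset V_0^k = V_0$ with $V_0^i/V_0^{i-1} \cong S_{\frak{m}_i}$, $\frak{m}_i \in [\frak{m}]_\sim$, we get $\frak{m}_1 \cdots \frak{m}_k V_0 = 0$, so $V_0 \subseteq V([\frak{m}]_\sim)$. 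Since $V_1 \cong W^c$ is a block module with $\Supp(V_1) \subseteq \Supp(W^c)$, and annihilation of $v \in V_1(B)$ by some $\frak{m} \in \fwords{B}$ persists inside $V$, we get $V_1(B) \subseteq V(B)$ for each $B \neq [\frak{m}]_\sim$. Combining, $V = V_0 + V_1 \subseteq \bigoplus_B V(B) \subseteq V$, and since the sum $\bigoplus_B V(B)$ is direct, $V$ is a block module, completing the induction.
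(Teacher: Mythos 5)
Your proof is correct, but it takes a somewhat different decomposition route than the paper. The paper inducts on $\dim V$, picks a block $B \in \Supp(V)$, passes to $W = V/V(B)$ (which by induction is a block module with $W(B) = 0$), and then needs only a single splitting: $\Ext(W, V(B)) = 0$ gives $V \cong V(B) \oplus W$ directly. You instead induct on composition length, peel off one simple submodule $U \cong S_\frak{m}$, decompose $W = V/U$ into $W([\frak{m}]_\sim) \oplus W^c$, and split only the subextension $0 \to U \to V' \to W^c \to 0$, after which the bookkeeping with $V_0 = \pi^{-1}(W([\frak{m}]_\sim))$ and $V_1$ reassembles the block decomposition. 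Both proofs hinge on exactly the same mechanism — Lemma \ref{extlemma} applied after observing that the relevant composition factors live in distinct blocks, so $\Ext$ between them vanishes by definition of the Ext relation — and your reassembly step, while a bit more involved than the paper's, is a routine and correct verification. One small point: where you cite ``Proposition \ref{SES} and Lemma \ref{Jacobson}'' to conclude that every composition factor of $W^c$ is $S_\frak{n}$ for some $\frak{n}$ in a block other than $[\frak{m}]_\sim$, you are essentially rederiving Lemma \ref{blockfactors} (applying \ref{SES} along a composition series of $W^c$ and noting that a simple block module $S_\frak{n}$ has support $\{[\frak{n}]_\sim\}$); citing Lemma \ref{blockfactors} directly, as the paper does, would be more economical. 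The paper's version is the cleaner of the two because it avoids the $V_0$, $V'$, $V_1$ re-sorting, splitting off an entire block space in one step rather than one composition factor at a time.
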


We will first prove the following lemma.

\begin{lemma}\label{blockfactors}
If $V$ is a finite-dimensional block module, then $\Fact(V) \subseteq \bigcup_{B \in \Supp(V)} \{S_\frak{m}: \frak{m} \in B\}$.
\end{lemma}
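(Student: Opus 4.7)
The plan is to induct on the length of $V$ and reduce to the case of a simple finite-dimensional block module. If $V$ has length zero the claim is vacuous. For the inductive step, pick a simple submodule $S \subseteq V$; the short exact sequence $0 \to S \to V \to V/S \to 0$ together with Proposition \ref{SES} makes both $S$ and $V/S$ into block modules whose supports sit inside $\Supp(V)$, and the induction hypothesis applied to $V/S$ handles all of its composition factors. Thus it suffices to show: if $S$ is a simple finite-dimensional block module with $\Supp(S) = \{C\}$, and $S \cong S_\frak{m}$ by Lemma \ref{Jacobson}, then $\frak{m} \in C$.

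For this core step I would choose any nonzero $v \in S$ and, using $S = S(C)$, produce $\frak{m}_1, \ldots, \frak{m}_k \in C$ with $\frak{m}_1 \cdots \frak{m}_k v = 0$. The key observation is that $\frak{m}_1 \cdots \frak{m}_k$ is a two-sided ideal of $\Gamma$, so $\frak{m}_1 \cdots \frak{m}_k \cdot \Gamma v = \Gamma \cdot \frak{m}_1 \cdots \frak{m}_k v = 0$. Since $S$ is simple and $v$ is nonzero, $S = \Gamma v$, and therefore $\frak{m}_1 \cdots \frak{m}_k \subseteq \mathrm{Ann}(S) = \frak{m}$.

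To close the argument I would invoke primality of $\frak{m}$: by the Artin-Wedderburn description in the preliminaries, $\Gamma / \frak{m}$ is a matrix algebra over a finite-dimensional division algebra, hence a simple and in particular prime ring, so $\frak{m}$ is a prime ideal. Then $\frak{m}_1 \cdots \frak{m}_k \subseteq \frak{m}$ forces $\frak{m}_i \subseteq \frak{m}$ for some $i$, and maximality of both yields $\frak{m} = \frak{m}_i \in C$. The main obstacle I anticipate is exactly this last step: one has to notice that the elements of $\fwords{C}$ are two-sided ideals (so they kill the entire simple module, not just the chosen generator) and that a cofinite maximal ideal in a possibly noncommutative $\Gamma$ is still prime, which is where the primality of simple Artinian rings enters essentially.
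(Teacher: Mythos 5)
Your proof is correct, and it takes a genuinely different route from the paper's. The paper argues directly on each block space: it chooses a single word $\frak{m}_1 \cdots \frak{m}_m \in \fwords{B_i}$ that annihilates all of $V(B_i)$ (possible since $V$ is finite-dimensional), builds the filtration $0 \subseteq V^{\frak{m}_1} \subseteq V^{\frak{m}_1\frak{m}_2} \subseteq \cdots \subseteq V(B_i)$, and observes that each subquotient is a $\Gamma/\frak{m}_j$-module, so refining to a composition series puts every composition factor among the $S_{\frak{m}_j}$. Your argument instead reduces to the case of a simple module via Proposition \ref{SES} and then uses ring-theoretic structure: because each element of $\fwords{C}$ is a two-sided ideal, killing one nonzero vector forces it to kill the whole cyclic simple module, so $\frak{m}_1\cdots\frak{m}_k \subseteq \mathrm{Ann}(S) = \frak{m}$; primality of $\frak{m}$ (valid because $\Gamma/\frak{m}$ is simple Artinian, hence a prime ring) extracts a single $\frak{m}_i \subseteq \frak{m}$, and maximality gives equality. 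The paper's version is shorter and stays elementary, avoiding primality in noncommutative rings; yours isolates a cleaner and reusable intermediate fact (a simple block module supported on $\{C\}$ has annihilator lying in $C$) at the cost of importing that simple rings are prime and that prime two-sided ideals absorb factors of products. One small notational point: the displayed identity $\frak{m}_1\cdots\frak{m}_k \cdot \Gamma v = \Gamma \cdot \frak{m}_1\cdots\frak{m}_k v$ isn't literally an equality of sets; what you actually use is $I\Gamma = I$ for a two-sided ideal $I$, giving $I\Gamma v = Iv = 0$, which is the correct justification.
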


\begin{proof}
We have $V = \bigoplus_{i=1}^k V(B_i)$ for some $B_i \in \cfsclasses$. So $\Fact(V) = \bigcup_{i=1}^k \Fact(V(B_i))$. Since $\Supp(V) = \{B_1,...,B_k\}$, it is sufficient to show $\Fact(V(B_i)) \subseteq \{S_\frak{m}: \frak{m} \in B_i\}$.

Given $\frak{m} \in \fwords{B_i}$, let $V^\frak{m} = \{v \in V: \frak{m} v = 0 \}$. Since $V$ is finite-dimensional, there are $\frak{m}_1, \ldots, \frak{m}_m \in B_i$ such that $V(B_i) = V^{\frak{m}_1 \ldots \frak{m}_m}$. Then $0 \subseteq V^{\frak{m}_1} \subseteq \cdots \subseteq V^{\frak{m}_1 \ldots \frak{m}_m} = V(B_i)$ is a sequence of $\Gamma$-modules, which can be refined to a composition series, where each composition factor is a simple finite-dimensional $\Gamma / \frak{m}_j$-module for some $j$.
\end{proof}

\begin{proof}[Proof of Proposition \ref{prop:extmodule}]
We show that every finite-dimensional $\Gamma$-module $V$ is a block module, by induction on the dimension of $V$. If $V = 0$, then $V$ is trivially a block module. Otherwise, for any $B \in \Supp(V)$, $W := V / V(B)$ is a block module (with $W(B) = 0$), by induction. So it will be sufficient to show that $\Ext(W, V(B))$ is zero. Note \begin{equation}\Ext(W, V(B)) = \Ext(\bigoplus_{C \neq B} W(C), V(B)) = \prod_{C \neq B} \Ext(W(C), V(B)).\end{equation} Lemma \ref{blockfactors} implies that $\Fact(W(C)) \subseteq \{S_\frak{m}: \frak{m} \in C\}$ and $\Fact(V(B)) \subseteq \{S_\frak{m}: \frak{m} \in B\}$. Since $B,C$ are distinct blocks, $\Ext(W(C), V(B)) = 0$ by Lemma \ref{extlemma}.
\end{proof}

\subsection{Examples}
\subsubsection{Weight Modules}
Here we show how block modules extend the idea of generalized weight modules.

\begin{definition}
    If $\sim$ is the equality relation, then a block module is called a \emph{generalized weight module}.
\end{definition}

In \cite{drozd1994harish}, the following generalization of a commutative algebra was defined, which is called a quasicommutative algebra. Generalized weight modules were explored in \cite{drozd1994harish} under the assumption that $\Gamma$ is quasicommutative.

\begin{definition}[\cite{drozd1994harish}]
We say $\Gamma$ is \emph{quasicommutative} if for all distinct $\frak{m}, \frak{n} \in \cfs(\Gamma)$, $\Ext(S_{\frak{m}}, S_{\frak{n}}) = 0$.
\end{definition}

In other words, $\Gamma$ is quasicommutative if the $\mathrm{Ext}$ relation is the equality relation. We provide an alternative characterization of quasicommutative algebras that we found more informative.

\begin{proposition}
$\Gamma$ is quasicommutative if and only if every finite-dimensional $\Gamma$-module is a generalized weight module.
\end{proposition}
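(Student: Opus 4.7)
The plan is to prove the two implications separately. The forward direction will follow immediately from Proposition \ref{prop:extmodule}, and the backward direction will be handled by the contrapositive: if quasicommutativity fails, I will construct a non-split extension whose total space cannot decompose as a direct sum of generalized weight spaces.

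For $(\Rightarrow)$, if $\Gamma$ is quasicommutative, then by definition the $\mathrm{Ext}$ relation on $\cfs(\Gamma)$ is the equality relation. Proposition \ref{prop:extmodule} then asserts that every finite-dimensional $\Gamma$-module is a block module with respect to this relation, which is precisely the definition of a generalized weight module.

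For $(\Leftarrow)$, I would argue contrapositively. Suppose there exist distinct $\frak{m}, \frak{n} \in \cfs(\Gamma)$ with $\Ext(S_\frak{m}, S_\frak{n}) \neq 0$, so one can pick a non-split short exact sequence $0 \to S_\frak{n} \xrightarrow{f} V \xrightarrow{g} S_\frak{m} \to 0$ with $V$ finite-dimensional; the aim is to show $V$ is not a generalized weight module. If it were, then Proposition \ref{SES} applied to the sequence gives $\Supp(V) = \Supp(S_\frak{n}) \cup \Supp(S_\frak{m}) = \{\frak{m}, \frak{n}\}$, hence $V = V(\frak{n}) \oplus V(\frak{m})$. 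Since $\frak{n}$ annihilates $f(S_\frak{n})$, the submodule $f(S_\frak{n})$ lies in $V(\frak{n})$, so the composition $V(\frak{m}) \hookrightarrow V \xrightarrow{g} S_\frak{m}$ has kernel $V(\frak{m}) \cap f(S_\frak{n}) \subseteq V(\frak{m}) \cap V(\frak{n}) = 0$. Because $V(\frak{m})$ is nonzero (as $\frak{m} \in \Supp(V)$) and $S_\frak{m}$ is simple, this composition is an isomorphism, and its inverse gives a section of $g$, contradicting non-splitness. I do not anticipate any serious obstacle here; the argument is essentially the standard observation that a direct-sum decomposition into generalized weight spaces forces the splitting of any extension whose outer terms lie in distinct weight blocks.
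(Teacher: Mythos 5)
Your proof is correct and takes essentially the same approach as the paper's: both use the block decomposition of $V$ to force the extension to split. The only difference is cosmetic — you produce a section $S_\frak{m} \to V$ by showing $g\vert_{V(\frak{m})}$ is an isomorphism, whereas the paper produces a retraction $V \to S_\frak{n}$ by showing $f(S_\frak{n}) = V(\frak{n})$ and composing with $\pi_\frak{n}$; these are mirror-image routes to the same conclusion.
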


\begin{proof}
Suppose every finite-dimensional $\Gamma$-module is a generalized weight module. Let $\frak{m}, \frak{n} \in \cfs(\Gamma)$ be distinct and let $0 \rightarrow S_{\frak{n}} \xrightarrow{f} V \xrightarrow{g} S_{\frak{m}} \rightarrow 0$ be a short exact sequence of $\Gamma$-modules. Then $V$ is finite-dimensional, so by assumption $V$ is a generalized weight module. So this is a short exact sequence of generalized weight modules. In particular, we have:
\[
\begin{tikzcd}
S_\frak{n} \arrow[r, "f"] & V \arrow[r, "g"] & S_\frak{m} \\
S_\frak{n} \arrow[r, "f_\frak{n}"] \arrow[u, equals] & V(\frak{n}) \arrow[r, "g_\frak{n}"] \arrow[u, "\iota_\frak{n}"] & 0 \arrow[u]
\end{tikzcd}
\]
so $f_\frak{n}$ is surjective, since $V(\frak{n}) \subseteq \ker(g)$. So $f$ splits via the map $f_\frak{n}^{-1} \circ \pi_\frak{n}$. Thus $\Ext(S_{\frak{m}}, S_{\frak{n}}) = 0$ for all distinct $\frak{m}, \frak{n} \in \cfs(\Gamma)$.

The forward direction is given by Proposition \ref{prop:extmodule}.
\end{proof}

\subsubsection{Upper Triangular Algebras}
Here we construct a block module which is not a generalized weight module. Let $\Gamma_0 = \mathbb{C}[h, c]$. In Section \ref{subsec:hctri}, $\Gamma_0$ will play the role of the Gelfand-Tsetlin subalgebra of $U(\frak{sl}_2)$. Let $\frak{z}$ denote the ideal $(h,c) \in \cfs(\Gamma_0)$. Take $\mathbb{C}x$ to be the one dimensional $(\Gamma_0,\Gamma_0)$-bimodule where $h,c$ act by $0$. Then $$\Gamma = \begin{bmatrix} \Gamma_0 & \mathbb{C}x \\ 0 & \Gamma_0 \end{bmatrix}$$ is an algebra under addition and multiplication of matrices.
Given a two-sided ideal $I$ of $\Gamma_0$, define \begin{equation}\overline{I} := \begin{bmatrix} I & \mathbb{C}x \\ 0 & \Gamma_0 \end{bmatrix} \qquad \text{ and } \qquad \underline{I} := \begin{bmatrix} \Gamma_0 & \mathbb{C}x \\ 0 & I \end{bmatrix}.\end{equation} Then every ideal of $\cfs(\Gamma)$ is of the form $\overline{\frak{m}}$ or $\underline{\frak{m}}$ for some $\frak{m} \in \cfs(\Gamma_0)$.

We will take $\sim$ to be the $\mathrm{Ext}$ relation, that is, the equivalence relation on $\cfs(\Gamma)$ generated by $\frak{m} \sim \frak{n}$ when $\Ext(S_\frak{m}, S_\frak{n}) \neq 0$.
Note that for any two-sided ideals $I,J$ of $\Gamma_0$, we have $\overline{I}\ \overline{J} = \overline{IJ} = \overline{I \cap J} = \overline{I} \cap \overline{J}$ and $\underline{I}\ \underline{J} = \underline{IJ} = \underline{I \cap J} = \underline{I} \cap \underline{J}$. So by Lemma \ref{remainderlemma}, $\Ext(S_{\overline{\frak{m}}}, S_{\overline{\frak{n}}}) = 0$ and $\Ext(S_{\underline{\frak{m}}}, S_{\underline{\frak{n}}}) = 0$ for $\frak{m}, \frak{n} \in \cfs(\Gamma)$. Moreover, for any two-sided ideals $I,J$ of $\Gamma_0$, we have $\underline{I}\overline{J} = \underline{I} \cap \overline{J}$, so again $\Ext(S_{\overline{\frak{m}}}, S_{\underline{\frak{n}}}) = 0$ for $\frak{m}, \frak{n} \in \cfs(\Gamma)$. Lastly, for any two-sided ideals $I,J$ of $\Gamma_0$, we have $$\overline{I}\underline{J} = \begin{bmatrix} I & I.\mathbb{C}x + \mathbb{C}x.J \\ 0 & J \end{bmatrix}.$$ Since $\mathbb{C}x$ is simple as both a left and right $\Gamma_0$-module, when $\frak{m}, \frak{n} \in \cfs(\Gamma_0)$, we have $\frak{n}.\mathbb{C}x + \mathbb{C}x.\frak{m} = 0$ if $\frak{m} = \frak{n} = \frak{z}$ and $\frak{n}.\mathbb{C}x + \mathbb{C}x.\frak{m} = \mathbb{C}x$ otherwise. Consequently, if $\frak{m}, \frak{n} \in \cfs(\Gamma_0)$ are not both equal to $\frak{z}$, then $\overline{\frak{n}}\underline{\frak{m}} = \overline{\frak{n}} \cap \underline{\frak{m}}$ and $\Ext(S_{\underline{\frak{m}}}, S_{\overline{\frak{n}}}) = 0$. We next demonstrate that $\Ext(S_{\underline{\frak{z}}}, S_{\overline{\frak{z}}}) \neq 0$, which shows that \begin{equation}\cfsclasses = \{ \{\overline{\frak{z}}, \underline{\frak{z}}\} \} \cup \{ \{\overline{\frak{m}}\}\}_{\frak{m} \in \cfs(\Gamma_0)\setminus \{\frak{z}\}} \cup \{\{\underline{\frak{m}}\} \}_{\frak{m} \in \cfs(\Gamma_0)\setminus \{\frak{z}\}}.\end{equation} Geometrically, $\cfs(\Gamma_0)$ is $\mathbb{C}^2$ and $\cfsclasses$ is two copies of $\mathbb{C}^2$ with their origins identified.

Since $\Gamma$ acts on its own columns, we have the $\Gamma$-module $\begin{bmatrix} \mathbb{C}x \\ \Gamma_0 \end{bmatrix}$ with submodule $\begin{bmatrix} 0 \\ \frak{z} \end{bmatrix}$. Consider the quotient module $\begin{bmatrix} \mathbb{C}x \\ \Gamma_0/\frak{z} \end{bmatrix}$. The short exact sequence of $\Gamma$-modules $$0 \rightarrow \mathbb{C}x \rightarrow \begin{bmatrix} \mathbb{C}x \\ \Gamma_0/\frak{z} \end{bmatrix} \rightarrow \Gamma_0 / \frak{z} \rightarrow 0$$ does not split, as $\begin{bmatrix} \mathbb{C}x \\ 0 \end{bmatrix}$ is the only nonzero proper submodule of $\begin{bmatrix} \mathbb{C}x \\ \Gamma_0 / \frak{z} \end{bmatrix}$.
Now $\mathbb{C}x$ is the simple $1$-dimensional $\Gamma / \overline{\frak{z}}$-module, and $\Gamma_0 / \frak{z}$ is the simple $1$-dimensional $\Gamma / \underline{\frak{z}}$-module. So $\Ext(S_{\underline{\frak{z}}}, S_{\overline{\frak{z}}}) \neq 0$.

This upper triangular algebra $\Gamma$ demonstrates that the $\mathrm{Ext}$ relation may be nontrivial. Furthermore, the $\Gamma$-module $\begin{bmatrix} \mathbb{C}x \\ \Gamma_0/\frak{z} \end{bmatrix}$ is a block module with respect to $\mathrm{Ext}$, but not a generalized weight module.

\section{Decomposition of $\HC(A; \Gamma, \sim)$}\label{sec:decomp}
In this section, we prove the first main theorem of this paper.
\subsection{Harish-Chandra Block Modules}

Let $A$ be a $\Bbbk$-algebra containing $\Gamma$. The authors of \cite{drozd1994harish} defined a Harish-Chandra module as an $A$-module which is a generalized weight module under the action of $\Gamma$. We extend this notion to block modules with respect to $\sim$.

\begin{definition}
If $V$ is an $A$-module which is a block module with respect to $\Gamma$ and $\sim$, we call $V$ a \emph{Harish-Chandra block module} (with respect to $\sim$). Let $\HC(A; \Gamma, \sim)$ be the full subcategory of $_A\mathbf{Mod}$ of all Harish-Chandra block modules with respect to $\sim$.
\end{definition}

If $V,W \in \HC(A; \Gamma, \sim)$ and $f: V \rightarrow W$ is a map of Harish-Chandra block modules, then we saw in Proposition \ref{prop:blockmaps} that $f$ splits into a direct sum of $\Gamma$-module (and thus linear) maps $f_B: V(B) \rightarrow W(B)$. The following proposition characterizes maps of Harish-Chandra block modules, and foreshadows the equivalence of these maps to natural transformations between certain functors.

\begin{proposition}\label{naturaltransformation}
Let $V, W \in \HC(A; \Gamma, \sim)$. Then $f: V \rightarrow W$ is a map of Harish-Chandra block modules if and only if it splits into a direct sum of linear maps $f_B: V(B) \rightarrow W(B)$ such that for all $a \in A$ and $B, C \in \cfsclasses$, the following diagram commutes:
\[
\begin{tikzcd}
V(B) \arrow[r, "f_B"] \arrow[d, "\pi_C(a.-)" left]& W(B) \arrow[d, "\pi_C(a.-)"]\\
V(C) \arrow[r, "f_C"] & W(C)
\end{tikzcd}
\]
where $\pi_B$ is the appropriate projection onto the block space corresponding to $B$.
\end{proposition}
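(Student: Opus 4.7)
The plan is to verify both directions by unpacking what it means for $f$ to commute with the $A$-action once $f$ is broken into its block components via Proposition \ref{prop:blockmaps}, together with the fact that for any $w \in W$ the sum $w = \sum_{C} \pi_C(w)$ is finite and $A$-equivariant termwise after projection.

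For the forward direction, I would assume $f : V \to W$ is an $A$-module map. By Proposition \ref{prop:blockmaps}, $f$ is already a direct sum of $\Gamma$-module maps $f_B : V(B) \to W(B)$, so the only content left is the commutativity of the square. Fixing $B, C \in \cfsclasses$, $a \in A$, and $v \in V(B)$, I would compute $\pi_C(a \cdot f(v)) = \pi_C(f(a \cdot v))$ using $A$-linearity, and then recognize the right-hand side as $f_C(\pi_C(a \cdot v))$ because $f$ preserves the block decomposition of $W$: if $a \cdot v = \sum_{C'} w_{C'}$ with $w_{C'} \in V(C')$, then $f(a \cdot v) = \sum_{C'} f_{C'}(w_{C'})$ and the $C$-component of this sum is exactly $f_C(\pi_C(a \cdot v))$. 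On the other hand $f(v) = f_B(v) \in W(B)$, so the left-hand side is $\pi_C(a \cdot f_B(v))$, giving the claimed equality.

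For the backward direction, I would start by showing that the hypothesis already forces each $f_B$ to be a $\Gamma$-module map, so that $f = \bigoplus_B f_B$ is at least $\Gamma$-linear. This is immediate by specializing $a$ to $\gamma \in \Gamma$ and $C = B$: since $V(B)$ and $W(B)$ are $\Gamma$-submodules, $\pi_B(\gamma \cdot -)$ restricted to $V(B)$ (resp. $W(B)$) is just the $\Gamma$-action, and the diagram reads $f_B(\gamma \cdot v) = \gamma \cdot f_B(v)$. Next, to promote $\Gamma$-linearity to $A$-linearity, I would take an arbitrary $v \in V$, decompose $v = \sum_{B} v_B$ as a finite sum with $v_B \in V(B)$, and compute
\[
f(a \cdot v) = \sum_{B} f(a \cdot v_B) = \sum_{B} \sum_{C} f_C\bigl(\pi_C(a \cdot v_B)\bigr) = \sum_{B} \sum_{C} \pi_C\bigl(a \cdot f_B(v_B)\bigr) = \sum_{B} a \cdot f_B(v_B) = a \cdot f(v),
\]
where the first equality uses additivity, the second expands $a \cdot v_B$ using the block decomposition of $V$ together with the fact that $f$ is the direct sum of the $f_C$, the third invokes the commutative square, the fourth reassembles the block decomposition of $a \cdot f_B(v_B)$ in $W$, and the last uses $f(v) = \sum_B f_B(v_B)$.

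There is no real obstacle here beyond bookkeeping; the statement is essentially a reformulation of $A$-linearity in terms of the block decomposition. The only subtle point to flag is that the $f_B$ are only asserted to be linear in the hypothesis, so one must remember to extract $\Gamma$-linearity from the commutative square before completing the $A$-linearity argument.
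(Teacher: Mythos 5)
Your proof is correct and takes essentially the same route as the paper: both directions reduce to comparing the block components of $f(a.v)$ and $a.f(v)$ after expanding via the block decomposition, with the forward splitting supplied by Proposition \ref{prop:blockmaps}. One minor remark: the intermediate step extracting $\Gamma$-linearity of each $f_B$ in the backward direction is superfluous, since your subsequent chain of equalities only ever uses linearity of the $f_B$ and the block decomposition of $V$ and $W$.
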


\begin{proof}
Let $v \in V(B)$. Note that $a.v = \sum_{C \in \cfsclasses} \pi_C(a.v)$. Now let $f$ be a direct sum of linear maps $f_B: V(B) \rightarrow W(B)$. Then $f(a.v) = f(\sum_C \pi_C(a.v)) = \sum_C f_C(\pi_C(a.v))$ and $a.f(v) = a.f_B(v) = \sum_C \pi_C(a.f_B(v))$. So $f$ is a Harish-Chandra block module map if and only if we have $f_C(\pi_C(a.v)) = \pi_C(a.f_B(v))$ for all $B,C \in \cfsclasses$ and $a \in A$.
\end{proof}

\subsection{Action of $A$ on a Block Module}
We will discuss Harish-Chandra block modules under the assumption that $\Gamma$ is a Harish-Chandra block subalgebra of $A$ with respect to $\sim$.

\begin{definition}
If $A/A\frak{m}$ is a Harish-Chandra block module with respect to $\sim$ for all $B \in \cfsclasses$ and $\frak{m} \in \fwords{B}$, then we say $\Gamma$ is a \emph{Harish-Chandra block subalgebra} of $A$ (with respect to $\sim$).
\end{definition}

The reader should note that a Harish-Chandra block module with respect to $=$ is the same as a Harish-Chandra module in the sense of \cite{drozd1994harish}, but a Harish-Chandra block subalgebra with respect to $=$ is not the same as a Harish-Chandra subalgebra in the sense of \cite{drozd1994harish}. Instead, the definition of a Harish-Chandra block subalgebra with respect to $\sim$ may be compared to the assumption of \cite[\S~4]{lepowsky1973determination}.

The definition of a Harish-Chandra block subalgebra with respect to $\sim$ is motivated by the following proposition, which is reminiscent of 
\cite[Prop.~14]{drozd1994harish}.


\begin{proposition} \label{prop:blockspan}
Let $\Gamma$ be a Harish-Chandra block subalgebra of $A$ with respect to $\sim$. If $V$ is an $A$-module and $v \in V$ such that $\frak{m}_1\cdots\frak{m}_k v = 0$ where $B \in \cfsclasses$ and $\frak{m}_1, \ldots \frak{m}_k \in B$, then \begin{equation}Av \subseteq \bigoplus_{C \in \bigcup_i \Supp(A/A\frak{m}_i)} V(C).\end{equation}
\end{proposition}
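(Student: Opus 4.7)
The plan is first to realize $Av$ as a quotient of the $\Gamma$-module $A/A\frak{m}$, where $\frak{m} := \frak{m}_1\cdots\frak{m}_k$, and then to bound $\Supp(A/A\frak{m})$ in terms of the $\Supp(A/A\frak{m}_i)$. Since each $\frak{m}_i \in B$, one has $\frak{m} \in \fwords{B}$, so the Harish-Chandra block subalgebra hypothesis provides that $A/A\frak{m}$ is itself a Harish-Chandra block module. The assumption $\frak{m}_1\cdots\frak{m}_k v = 0$ reads as $\frak{m} v = 0$, so the evaluation map $A \to V$, $a \mapsto av$, annihilates the left ideal $A\frak{m}$ and descends to a $\Gamma$-module surjection $A/A\frak{m} \twoheadrightarrow Av$. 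Proposition \ref{SES} then upgrades $Av$ to a block module with $\Supp(Av) \subseteq \Supp(A/A\frak{m})$, reducing the proposition to the containment $\Supp(A/A\frak{m}) \subseteq \bigcup_{i=1}^{k} \Supp(A/A\frak{m}_i)$.

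Next I would exhibit a filtration of $A$ by left ideals whose successive quotients are dominated by the individual $A/A\frak{m}_i$. The correct choice is to build the product $\frak{m}$ from right to left:
\[
A = F_0 \supseteq F_1 \supseteq \cdots \supseteq F_k = A\frak{m}, \qquad F_i := A\frak{m}_{k-i+1}\cdots\frak{m}_k.
\]
For each $y \in \frak{m}_{k-i+2}\cdots\frak{m}_k$, the left $A$-module map $A \to F_{i-1}/F_i$ sending $a \mapsto ay + F_i$ descends to a map $A/A\frak{m}_{k-i+1} \to F_{i-1}/F_i$, because for any $\gamma \in \frak{m}_{k-i+1}$ the element $\gamma y$ lies in $\frak{m}_{k-i+1}\frak{m}_{k-i+2}\cdots\frak{m}_k$, so $a\gamma y \in F_i$. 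Summing these maps over a generating set for $\frak{m}_{k-i+2}\cdots\frak{m}_k$ yields a left $A$-module surjection $\bigoplus_y A/A\frak{m}_{k-i+1} \twoheadrightarrow F_{i-1}/F_i$. Since block modules are closed under direct sums and, by Proposition \ref{SES}, under quotients, this forces $\Supp(F_{i-1}/F_i) \subseteq \Supp(A/A\frak{m}_{k-i+1})$. Applying Proposition \ref{SES} iteratively to the induced filtration of the block module $A/A\frak{m}$ then yields $\Supp(A/A\frak{m}) = \bigcup_i \Supp(F_{i-1}/F_i) \subseteq \bigcup_{j=1}^k \Supp(A/A\frak{m}_j)$, as needed.

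The main obstacle is picking the correct orientation for the filtration. The naive left-to-right analogue $F_i' := A\frak{m}_1\cdots\frak{m}_i$ fails at the factor-through step: to descend the map $a \mapsto ay$ with $y \in \frak{m}_1\cdots\frak{m}_{i-1}$ through $A/A\frak{m}_i$, one would need $\gamma y \in \frak{m}_1\cdots\frak{m}_i$ for $\gamma \in \frak{m}_i$, but $\gamma y$ only lies a priori in the wrong-order product $\frak{m}_i\frak{m}_1\cdots\frak{m}_{i-1}$. Building $\frak{m}$ up from the right places each new factor on the left, which is exactly the side where $\Gamma$ acts on $A$ by left multiplication, making the factor-through property automatic.
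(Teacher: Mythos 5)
Your proof is correct and reaches the same conclusion by a genuinely related but not identical route. Both you and the paper reduce to showing that $\Supp(A/A\frak{m}) \subseteq \bigcup_i \Supp(A/A\frak{m}_i)$ (where $\frak{m} = \frak{m}_1\cdots\frak{m}_k$), and both do so by peeling off one factor $\frak{m}_i$ at a time, working from the right. The difference is in the bookkeeping device. The paper works with $A \otimes_\Gamma \Gamma v$ (more generally with $A \otimes_\Gamma M$ for any $M$ killed by $\frak{m}_1\cdots\frak{m}_k$) and runs an induction on $k$ using the short exact sequence $0 \to \frak{m}_k M \to M \to M/\frak{m}_k M \to 0$ and right-exactness of $A \otimes_\Gamma -$; the base case uses the decomposition $A \otimes_\Gamma M \cong \bigoplus_C (A/A\frak{m}_1)(C)\otimes_\Gamma M$. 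You instead map $A/A\frak{m}$ onto $Av$ directly and filter $A/A\frak{m}$ by the left ideals $F_i = A\frak{m}_{k-i+1}\cdots\frak{m}_k$, showing each successive quotient $F_{i-1}/F_i$ is covered by a direct sum of copies of $A/A\frak{m}_{k-i+1}$. Your approach has the modest advantage of avoiding tensor products and any base-change right-exactness argument, trading it for the explicit filtration. Your care about the orientation of the filtration (building from the right so that the factor-through step $\gamma y \in \frak{m}_{k-i+1}\cdots\frak{m}_k$ for $\gamma \in \frak{m}_{k-i+1}$ is automatic) is exactly the right point; it mirrors the paper's choice to strip $\frak{m}_k$ from the right in the SES.

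One detail you invoke but do not justify: that an arbitrary direct sum of block modules with supports $\subseteq \{C\}$ is again a block module with support $\subseteq \{C\}$. This is true, and the paper needs it too, but it requires the small observation that if $\frak{m}_1 w_1 = 0$ and $\frak{m}_2 w_2 = 0$ with $\frak{m}_1, \frak{m}_2 \in \fwords{C}$, then the product $\frak{m}_2 \frak{m}_1 \in \fwords{C}$ kills both, since $\frak{m}_2\frak{m}_1 w_2 \subseteq \frak{m}_2 \Gamma w_2 = \frak{m}_2 w_2 = 0$ (using that $\frak{m}_2$ is a two-sided ideal). Worth a sentence, but it is not a gap in the argument.
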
 

\begin{proof}
Consider the surjective map of $\Gamma$-modules $f: A \otimes_\Gamma \Gamma v \rightarrow A v$ given by $a \otimes \gamma v \mapsto a\gamma v$. We will first show that $A \otimes_\Gamma \Gamma v$ is a block module with support contained in $\bigcup_i \Supp(A/A\frak{m}_i)$. Consequently, by Proposition \ref{SES}, $A v$ is a block module with $\Supp(Av) \subseteq \Supp(A \otimes_\Gamma \Gamma v) \subseteq \bigcup_i \Supp(A/A\frak{m}_i)$.

If $M$ is a $\Gamma$-module, $B \in \cfsclasses$, and $\frak{m}_1,\ldots,\frak{m}_k \in B$ such that $\frak{m}_1 \cdots \frak{m}_k M = 0$, then \begin{equation}\label{eq:tensor} A \otimes_\Gamma M \cong \big(A/A\frak{m}_1 \cdots \frak{m}_k\big) \otimes_\Gamma M \cong \bigoplus_{C \in \cfsclasses} \Big(\big(A/A\frak{m}_1 \cdots \frak{m}_k\big)(C) \otimes_\Gamma M\Big)\end{equation} is a block module since $\Gamma$ is a Harish-Chandra block subalgebra of $A$. We show that $\Supp(A \otimes_\Gamma M) \subseteq \bigcup_i \Supp(A/A\frak{m}_i)$ by induction on $k$.

If $k = 1$, then $\Supp(A \otimes_\Gamma M) \subseteq \Supp(A/A\frak{m}_1)$ is clear by \ref{eq:tensor}. Otherwise, consider the short exact sequence: $$0 \rightarrow \frak{m}_kM \rightarrow M \rightarrow M/\frak{m}_kM \rightarrow 0.$$ By tensoring with $A$ we have the exact sequence: $$A \otimes_\Gamma \frak{m}_kM \xrightarrow{f} A \otimes_\Gamma M \rightarrow A \otimes_\Gamma (M/\frak{m}_kM) \rightarrow 0,$$ which gives us the short exact sequence: \begin{equation}\label{eq:tensor2}0 \rightarrow f(A \otimes_\Gamma \frak{m}_kM) \hookrightarrow A \otimes_\Gamma M \rightarrow A \otimes_\Gamma (M/\frak{m}_kM) \rightarrow 0.\end{equation}
By induction, $A \otimes_\Gamma \frak{m}_kM$ and $A\otimes_\Gamma (M/\frak{m}_kM)$ are block modules with $\Supp(A \otimes_\Gamma \frak{m}_kM) \subseteq \bigcup_{i=1}^{k-1} \Supp(A/A\frak{m}_i)$ and $\Supp(A\otimes_\Gamma (M/\frak{m}_kM)) \subseteq \Supp(A/A\frak{m}_k)$. Furthermore, by Proposition \ref{SES}, the image $f(A\otimes_\Gamma \frak{m}_kM)$ is a block module with $\Supp(f(A\otimes_\Gamma \frak{m}_kM)) \subseteq \Supp(A\otimes_\Gamma \frak{m}_kM)$. Now applying Proposition \ref{SES} to \ref{eq:tensor2}, we have \begin{align*}
    \Supp(A \otimes_\Gamma M) &= \Supp(f(A\otimes_\Gamma \frak{m}_kM)) \cup \Supp(A\otimes_\Gamma (M/\frak{m}_kM))\\
    &\subseteq \Supp(A\otimes_\Gamma \frak{m}_kM) \cup \Supp(A\otimes_\Gamma (M/\frak{m}_kM))\\
    &\subseteq \bigcup_i \Supp(A/A\frak{m}_i)
\end{align*}
as desired.
\end{proof}

\subsection{Relations and Decompositions}

In this subsection, we generalize \cite[Cor.~15]{drozd1994harish} and \cite[Cor.~16]{drozd1994harish} for Harish-Chandra block modules with respect to $\sim$.

If $\mathcal{D}\subseteq \cfsclasses$, and $V$ is a Harish-Chandra block module, then let $V(\mathcal{D})$ denote the $\Gamma$-submodule $\bigoplus_{B \in \mathcal{D}} V(B)$. Let $\prec$ be the preorder generated by $\{(B, C): C \in \Supp(A/A\frak{m})\}$ for $B \in \cfsclasses$ and $\frak{m} \in B$. Then rephrasing Theorem \ref{prop:blockspan}, we can say:

\begin{theorem}\label{relationtheorem1}
Suppose $\Gamma$ is a Harish-Chandra block subalgebra of $A$ with respect to $\sim$ and $V$ is an $A$-module. Let $\mathcal{D} \subseteq \cfsclasses$. Then \begin{equation}A.\big(V(\mathcal{D})\big) \subseteq \bigoplus_{C \succ \mathcal{D}} V(C).\end{equation}
\end{theorem}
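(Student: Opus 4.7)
The statement is presented as a reformulation of Proposition \ref{prop:blockspan}, so the proof will be very short. The plan is to reduce the claim to the single-element version for $v \in V(B)$ with $B \in \mathcal{D}$, invoke Proposition \ref{prop:blockspan}, and rewrite the resulting support bound in the language of the preorder $\prec$.

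First I would reduce the claim to individual vectors. Since $V(\mathcal{D}) = \bigoplus_{B \in \mathcal{D}} V(B)$ and every element of $V(B)$ is annihilated by some element of $\fwords{B}$, it suffices to show that for each $B \in \mathcal{D}$ and each $v \in V(B)$ one has $Av \subseteq \bigoplus_{C \succ B} V(C)$; then summing over $B \in \mathcal{D}$ and using that $C \succ B$ with $B \in \mathcal{D}$ entails $C \succ \mathcal{D}$ (which I read as ``there exists $B \in \mathcal{D}$ with $B \prec C$'') gives the theorem.

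For the single-vector case, let $v \in V(B)$ and pick $\mathfrak{m}_1, \ldots, \mathfrak{m}_k \in B$ with $\mathfrak{m}_1 \cdots \mathfrak{m}_k v = 0$. Proposition \ref{prop:blockspan} yields
\[
Av \subseteq \bigoplus_{C \in \bigcup_i \Supp(A/A\mathfrak{m}_i)} V(C).
\]
The key observation is then purely definitional: each generating pair of $\prec$ is of the form $(B, C)$ with $C \in \Supp(A/A\mathfrak{m})$ for some $\mathfrak{m} \in B$, so each $C \in \bigcup_i \Supp(A/A\mathfrak{m}_i)$ satisfies $B \prec C$. Hence the index set on the right is contained in $\{C : C \succ B\}$ and the desired inclusion follows.

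There is no real obstacle here; the statement is effectively a change of notation. The only minor point worth flagging in the write-up is fixing the meaning of ``$C \succ \mathcal{D}$'' (as existence of some $B \in \mathcal{D}$ with $B \prec C$), after which the reduction to Proposition \ref{prop:blockspan} is immediate.
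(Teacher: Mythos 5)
Your proof is correct and matches the paper's own treatment: the paper states this theorem with the preamble ``rephrasing Theorem \ref{prop:blockspan}, we can say,'' offering no separate argument, and your write-up simply makes explicit the reduction to a single vector $v \in V(B)$, the invocation of Proposition \ref{prop:blockspan}, and the observation that each $C \in \Supp(A/A\mathfrak{m}_i)$ with $\mathfrak{m}_i \in B$ is a generator of $\prec$, hence $B \prec C$. Your clarification of the reading of ``$C \succ \mathcal{D}$'' is appropriate and consistent with the paper's intent.
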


Let $\Delta$ be the equivalence relation generated by $\prec$ (that is, the weakly connected components of $(\cfsclasses, \prec)$). Let $\nabla$ be the equivalence relation induced by $\prec$ (that is, the strongly connected components of $(\cfsclasses, \prec)$). Then Theorem \ref{relationtheorem1} has the following corollaries for $\HC(A; \Gamma, \sim)$.

\begin{corollary}
Let $\Gamma$ be a Harish-Chandra block subalgebra of $A$. Let $V$ be a Harish-Chandra block module.
\begin{enumerate}[{\rm (i) }]
    \item If $\mathcal{D} \subseteq \cfsclasses$ is $\prec$ closed, then $V(\mathcal{D})$ is an $A$-submodule of $V$.
    \item $V = \bigoplus_{\mathcal{D} \in \big(\cfsclasses\big)/\Delta} V(\mathcal{D})$ as $A$-modules.
    \item If $V$ is indecomposable, and $B \in \Supp(V)$, then $\Supp(V) \subseteq \Delta B$.
    \item If $V$ is irreducible, and $B \in \Supp(V)$, then $\Supp(V) \subseteq \nabla B$.
\end{enumerate}
\end{corollary}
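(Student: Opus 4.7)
The plan is to extract all four statements from Theorem \ref{relationtheorem1} together with elementary facts about the preorder $\prec$ and its associated equivalence relations $\Delta$ and $\nabla$. Throughout I interpret "$\mathcal{D}$ is $\prec$-closed" to mean upward closed: $B \in \mathcal{D}$ and $B \prec C$ implies $C \in \mathcal{D}$.

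For (i), let $v \in V(\mathcal{D})$ and decompose $v = \sum_{B \in \mathcal{D}} v_B$ with $v_B \in V(B)$. By Theorem \ref{relationtheorem1} applied to the singleton $\{B\}$, we have $A v_B \subseteq \bigoplus_{C \succ B} V(C)$. Since $\mathcal{D}$ is $\prec$-closed, every such $C$ lies in $\mathcal{D}$, so $A v_B \subseteq V(\mathcal{D})$ and hence $A v \subseteq V(\mathcal{D})$.

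For (ii), I would first observe that each $\Delta$-equivalence class $\mathcal{D}$ is $\prec$-closed: if $B \in \mathcal{D}$ and $B \prec C$, then $B \Delta C$ because $\Delta$ is the equivalence relation generated by $\prec$, hence $C \in \mathcal{D}$. By part (i) each $V(\mathcal{D})$ is an $A$-submodule, and as a $\Gamma$-module $V = \bigoplus_{B \in \cfsclasses} V(B)$ regroups along $\Delta$-classes to give $V = \bigoplus_{\mathcal{D} \in (\cfsclasses)/\Delta} V(\mathcal{D})$, now as $A$-modules.

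Part (iii) is then immediate from (ii): indecomposability forces exactly one summand $V(\mathcal{D})$ to be nonzero, and $B \in \Supp(V)$ forces $\mathcal{D} = \Delta B$. For (iv), given an irreducible $V$ and $B \in \Supp(V)$, consider the $\prec$-closed set $\mathcal{D}_B := \{C : B \prec C\}$ (upward closure of $\{B\}$ under $\prec$). By (i), $V(\mathcal{D}_B)$ is an $A$-submodule, and by reflexivity of $\prec$ it contains the nonzero subspace $V(B)$; irreducibility then gives $V = V(\mathcal{D}_B)$, so $\Supp(V) \subseteq \mathcal{D}_B$, i.e., $B \prec C$ for every $C \in \Supp(V)$. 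Applying the same argument with the roles of $B$ and an arbitrary $C \in \Supp(V)$ swapped yields $C \prec B$, so $B \nabla C$ and $\Supp(V) \subseteq \nabla B$.

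None of the steps present a real obstacle; the only point requiring care is the correct reading of "$\prec$-closed" and the fact that the preorder includes reflexivity, which is needed to ensure $V(B) \subseteq V(\mathcal{D}_B)$ in the proof of (iv) and $\mathcal{D} \subseteq V(\mathcal{D})$'s stability in (i).
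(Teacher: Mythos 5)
Your proof is correct, and since the paper leaves this corollary as an immediate consequence of Theorem \ref{relationtheorem1} with no written proof, your argument supplies exactly the expected standard deduction: read ``$\prec$-closed'' as upward closed, apply the theorem to get (i), note that $\Delta$-classes and $\nabla$-upward-closures are $\prec$-closed to get (ii) and (iv), and read (iii) off the direct sum decomposition. Nothing more to add.
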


Defining $\HC(A; \Gamma, \sim; \mathcal{D})$ to be the full subcategory of $\HC(A; \Gamma, \sim)$ of Harish-Chandra block modules $V$ with $\Supp(V) \subseteq \mathcal{D}$, and $\mathrm{Irr}(A; \Gamma, \sim; \mathcal{D})$ the set of isomorphism classes of irreducible modules of $\HC(A; \Gamma, \sim; \mathcal{D})$, we obtain as a corollary our first main theorem:

\begin{theorem}
Suppose $\Gamma$ is a Harish-Chandra block subalgebra of $A$.
\begin{enumerate}[{\rm (i) }]
    \item $\HC(A; \Gamma, \sim) = \bigoplus_{\mathcal{D} \in \big(\cfsclasses\big)/\Delta} \HC(A; \Gamma, \sim; \mathcal{D})$
    \item $\mathrm{Irr}(A; \Gamma, \sim) = \bigsqcup_{\mathcal{D} \in \big(\cfsclasses\big)/\nabla} \mathrm{Irr}(A; \Gamma, \sim; \mathcal{D})$
\end{enumerate}
\end{theorem}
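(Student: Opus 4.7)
The plan is to derive this theorem directly from the immediately preceding corollary, which already packages nearly all the content. The theorem is really a categorical restatement: part (i) upgrades the object-level decomposition $V = \bigoplus_{\mathcal{D}} V(\mathcal{D})$ to a decomposition of categories, while part (ii) combines the support-restriction on irreducibles with a disjointness check.

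For part (i), I would verify the two defining properties of a direct sum decomposition of (full) subcategories of $\HC(A;\Gamma,\sim)$: that every object decomposes as a direct sum of objects from the summands, and that $\Hom$-spaces between distinct summands vanish. The object decomposition is exactly part (ii) of the preceding corollary: $V = \bigoplus_{\mathcal{D} \in (\cfsclasses)/\Delta} V(\mathcal{D})$ as $A$-modules, and by construction $V(\mathcal{D})$ has support contained in $\mathcal{D}$, so $V(\mathcal{D}) \in \HC(A;\Gamma,\sim;\mathcal{D})$. For the $\Hom$-vanishing, take $V \in \HC(A;\Gamma,\sim;\mathcal{D}_1)$ and $W \in \HC(A;\Gamma,\sim;\mathcal{D}_2)$ with $\mathcal{D}_1 \neq \mathcal{D}_2$ distinct $\Delta$-classes, and any $A$-linear $f : V \to W$. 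By Proposition \ref{prop:blockmaps}, $f$ splits as a direct sum of block maps $f_B : V(B) \to W(B)$. Since $\Supp(V) \subseteq \mathcal{D}_1$ and $\Supp(W) \subseteq \mathcal{D}_2$ are disjoint in $\cfsclasses$, for every $B$ at least one of $V(B), W(B)$ vanishes, so each $f_B = 0$, whence $f = 0$.

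For part (ii), one containment is immediate: every $V \in \mathrm{Irr}(A;\Gamma,\sim;\mathcal{D})$ is by definition an irreducible Harish-Chandra block module, so its isoclass lies in $\mathrm{Irr}(A;\Gamma,\sim)$. Conversely, given an irreducible $V$, pick any $B \in \Supp(V)$ (nonempty since $V \neq 0$); part (iv) of the preceding corollary gives $\Supp(V) \subseteq \nabla B$, so $V \in \HC(A;\Gamma,\sim;\nabla B)$ and its isoclass sits in $\mathrm{Irr}(A;\Gamma,\sim;\nabla B)$. Disjointness of the union follows because an irreducible $V$ in both $\HC(A;\Gamma,\sim;\mathcal{D}_1)$ and $\HC(A;\Gamma,\sim;\mathcal{D}_2)$ would satisfy $\emptyset \neq \Supp(V) \subseteq \mathcal{D}_1 \cap \mathcal{D}_2$, forcing $\mathcal{D}_1 = \mathcal{D}_2$ since distinct $\nabla$-classes are disjoint.

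There is no genuine obstacle here: all the substantive representation-theoretic content was absorbed by Proposition \ref{prop:blockspan} and the preceding corollary (which in turn rested on Proposition \ref{SES} for the decomposition and on Theorem \ref{relationtheorem1} for $A$-stability of $\prec$-closed block sums). The only mild subtlety is remembering that a direct sum decomposition of categories is not just an object decomposition but also demands the $\Hom$-vanishing step, which is what invokes Proposition \ref{prop:blockmaps} in the argument above.
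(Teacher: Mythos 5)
Your argument is correct and is exactly what the paper has in mind: the paper treats this theorem as an immediate consequence of the preceding corollary and gives no explicit proof, and you have simply filled in the routine verification — the object decomposition from item (ii) of that corollary, the $\Hom$-vanishing from Proposition \ref{prop:blockmaps}, and the support containment/disjointness from item (iv).
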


\subsection{Sufficient Conditions and Relation to Drozd-Futorny-Ovsienko}\label{subsec:dfosubalgebras}
In this section, we compare Harish-Chandra block modules to the Harish-Chandra modules of \cite{drozd1994harish}.

\begin{definition}[{\cite{drozd1994harish}}]
We say $\Gamma$ is \emph{quasicentral} (in $A$) if for all $a \in A$, $\Gamma a \Gamma$ is finitely generated as a left and right $\Gamma$-module.
\end{definition}

The following result shows that any Harish-Chandra subalgebra in the sense of \cite{drozd1994harish} is a Harish-Chandra block subalgebra with respect to the equality relation.

\begin{proposition}\label{prop:dfosuff}
Let $\Gamma$ be a subalgebra of $A$. If $\Gamma$ satisfies the following:
\begin{enumerate}[{\rm (i) }]
    \item $\Gamma$ is noetherian
    \item $\Gamma$ is quasicommutative
    \item $\Gamma$ is quasicentral in $A$
\end{enumerate}
then $\Gamma$ is a Harish-Chandra block subalgebra of $A$ with respect to the equality relation. Furthermore, for any $\frak{m} \in \cfs(\Gamma)$ and $m \geq 1$: \begin{equation}\label{eq:dforefinement}\Supp(A/A\frak{m}^m) \subseteq X(\frak{m}) := \bigcup_{a \in A} \{\frak{n}: S_\frak{n} \in \Fact(\frac{\Gamma a \Gamma}{\Gamma a \frak{m}})\}.\end{equation}
\end{proposition}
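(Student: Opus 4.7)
The plan is to reduce the block-module question for $A/A\frak{m}^m$ to the bimodule quotient $\Gamma a \Gamma / \Gamma a \frak{m}^m$, which under the three hypotheses will be finite-dimensional and hence, by quasicommutativity, a generalized weight module; the same reduction together with a filtration argument will yield the refinement \eqref{eq:dforefinement}.

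First I would show that $\Gamma a \Gamma / \Gamma a \frak{m}^m$ is finite-dimensional over $\Bbbk$. Left-noetherianness of $\Gamma$ makes $\frak{m}^i$ a finitely generated left ideal, so each $\frak{m}^i/\frak{m}^{i+1}$ is finitely generated as a left $\Gamma/\frak{m}$-module, hence finite-dimensional (since $\dim_\Bbbk \Gamma/\frak{m} < \infty$ by definition of $\cfs(\Gamma)$); induction on $m$ gives $\dim_\Bbbk \Gamma/\frak{m}^m < \infty$. Quasicentrality makes $\Gamma a \Gamma$ finitely generated as a right $\Gamma$-module, and since $\Gamma a \Gamma \cdot \frak{m}^m = \Gamma a (\Gamma \frak{m}^m) = \Gamma a \frak{m}^m$, the quotient $\Gamma a \Gamma / \Gamma a \frak{m}^m$ is a finitely generated right $\Gamma/\frak{m}^m$-module and therefore finite-dimensional. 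By quasicommutativity and Proposition \ref{prop:extmodule}, it is a block module with respect to equality.

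For the first assertion of the proposition, the left $\Gamma$-module map $\Gamma a \Gamma \to A/A\frak{m}^m$, $b \mapsto b + A\frak{m}^m$, kills $\Gamma a \frak{m}^m \subseteq A\frak{m}^m$, so it descends to a surjection $\Gamma a \Gamma / \Gamma a \frak{m}^m \twoheadrightarrow \Gamma \bar a$, making $\Gamma \bar a$ a block module by Proposition \ref{SES}. Writing $A/A\frak{m}^m = \sum_{a \in A} \Gamma \bar a$ and using that the sum $\bigoplus_{B} V(B)$ of block spaces is always direct (Section 2.2), we conclude $A/A\frak{m}^m$ is itself a block module. Since this holds for every $\frak{m} \in \cfs(\Gamma)$ and every $m \geq 1$, and every element of $\fwords{[\frak{m}]_=}$ has the form $\frak{m}^m$, $\Gamma$ is a Harish-Chandra block subalgebra of $A$ with respect to equality.

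For the refinement I would filter $\Gamma a \Gamma / \Gamma a \frak{m}^m$ by the submodules $\Gamma a \frak{m}^i / \Gamma a \frak{m}^m$ ($0 \leq i \leq m$) and exhibit each successive quotient $\Gamma a \frak{m}^i / \Gamma a \frak{m}^{i+1}$ as a left $\Gamma$-module quotient of $(\Gamma a \Gamma / \Gamma a \frak{m})^{\oplus s}$ with $s = \dim_\Bbbk \frak{m}^i/\frak{m}^{i+1}$: picking lifts $x_1, \ldots, x_s \in \frak{m}^i$ of a $\Bbbk$-basis of $\frak{m}^i/\frak{m}^{i+1}$, the map $(b_1, \ldots, b_s) \mapsto \sum_j \overline{b_j x_j}$ does the job, with well-definedness modulo $\Gamma a \frak{m}$ following from $\frak{m} \cdot x_j \subseteq \frak{m}^{i+1}$ and surjectivity from the fact that any $y \in \frak{m}^i$ equals $\sum_j \gamma_j x_j + z$ for some $\gamma_j \in \Gamma$ and $z \in \frak{m}^{i+1}$. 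Consequently $\Fact(\Gamma a \Gamma / \Gamma a \frak{m}^m) \subseteq \Fact(\Gamma a \Gamma / \Gamma a \frak{m}) \subseteq \{S_\frak{n} : \frak{n} \in X(\frak{m})\}$, and Lemma \ref{blockfactors} gives $\Supp(\Gamma \bar a) \subseteq X(\frak{m})$; summing over $a \in A$ yields $\Supp(A/A\frak{m}^m) \subseteq X(\frak{m})$. The only step requiring real care is this last construction of the surjection (particularly the surjectivity check, which hinges on the choice of basis lifts); the rest of the proof is a sequence of dimension counts invoking the three hypotheses in turn.
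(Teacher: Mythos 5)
Your proof is correct, but it is a self-contained re-derivation of material that the paper handles by a single citation. The paper's proof simply invokes \cite[Prop.~14]{drozd1994harish} (which, under exactly your three hypotheses, says that for any $A$-module $V$ and any $v\in V(\frak{m})$ one has $Av\subseteq\bigoplus_{\frak{n}\in X(\frak{m})}V(\frak{n})$), applies it with $V=A/A\frak{m}^m$ and $v=1+A\frak{m}^m\in V(\frak{m})$, and reads off both conclusions in two lines. Your argument in effect unpacks what is behind that citation: the finite-dimensionality of $\Gamma a\Gamma/\Gamma a\frak{m}^m$ from noetherian plus quasicentral, quasicommutativity feeding into Proposition \ref{prop:extmodule} to make it a generalized weight module, and the filtration $\Gamma a\frak{m}^i/\Gamma a\frak{m}^m$ with the surjections $(\Gamma a\Gamma/\Gamma a\frak{m})^{\oplus s}\twoheadrightarrow\Gamma a\frak{m}^i/\Gamma a\frak{m}^{i+1}$ to control composition factors and land inside $X(\frak{m})$. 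That filtration step is the genuinely new ingredient in your writeup and is the part that makes the refinement \eqref{eq:dforefinement} work without the citation; it is a nice piece of reasoning and would be worth recording somewhere even if the paper chooses the shorter route.

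Two small points to tighten. First, the image of $\Gamma a\Gamma\to A/A\frak{m}^m$ is $(\Gamma a\Gamma+A\frak{m}^m)/A\frak{m}^m$, which contains but is generally larger than $\Gamma\bar a$; say instead that $\Gamma\bar a$ is a submodule of this image and invoke Proposition \ref{SES} once more, or simply work with the image $\overline{\Gamma a\Gamma}$ throughout since the $\overline{\Gamma a\Gamma}$ for $a\in A$ also sum to $A/A\frak{m}^m$. Second, in the surjectivity check the coefficients $\gamma_j$ should be scalars $c_j\in\Bbbk$, since $\{x_j\}$ was chosen to lift a $\Bbbk$-basis of $\frak{m}^i/\frak{m}^{i+1}$; writing $\gamma_j\in\Gamma$ is harmless but slightly obscures that this is just a change of basis over the ground field.
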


\begin{remark}
The set $X(\frak{m})$ of \ref{eq:dforefinement} is introduced in \cite{drozd1994harish}. The inclusion in \ref{eq:dforefinement} indicates that Proposition \ref{prop:blockspan} is a refinement of \cite[Prop.~14]{drozd1994harish}.
\end{remark}

\begin{proof}
By \cite[Prop.~14]{drozd1994harish}, if $v \in V(\frak{m})$ for an $A$-module $V$, then $Av \subseteq \bigoplus_{\frak{n} \in X(\frak{m})} V(\frak{n})$.

Take $V = A/A\frak{m}^m$ for any $\frak{m} \in \cfs(\Gamma)$ and $m \geq 1$. Then $1 + A \frak{m}^m \in V(\frak{m})$, so $A/A\frak{m}^m = A.(1 + A \frak{m}^m) \subseteq \bigoplus_{\frak{n} \in X(\frak{m})} V(\frak{n})$. So $A/A\frak{m}^m$ is a Harish-Chandra block module with respect to $=$ for any $\frak{m} \in \cfs(\Gamma)$ and $m \geq 1$, hence $\Gamma$ is a Harish-Chandra block subalgebra of $A$ with respect to $=$.
\end{proof}

The definition of a Harish-Chandra subalgebra given in \cite{drozd1994harish} is often easier to check. We provide an analogous set of criteria for checking that $\Gamma$ is a Harish-Chandra block subalgebra of $A$ with respect to $\sim$.

\begin{definition}
    We say $\Gamma$ is quasinoetherian (with respect to $\sim$) if for every $B \in \cfsclasses$ and $\frak{m} \in \fwords{B}$, $\Gamma / \frak{m}$ is finite-dimensional.
\end{definition}

\begin{proposition}
    Let $\Gamma$ be a subalgebra of $A$. If $\Gamma$ satisfies the following:
    \begin{enumerate}[{\rm (i) }]
        \item $\Gamma$ is quasinoetherian with respect to $\sim$
        \item $\Gamma$ is quasicentral in $A$
    \end{enumerate}
    then $\Gamma$ is a Harish-Chandra block subalgebra of $A$ with respect to the $\mathrm{Ext}$ relation.
\end{proposition}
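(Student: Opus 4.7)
The plan is to verify the definition directly: for each $B \in \cfsclasses$ and each $\frak{m} \in \fwords{B}$, I need to show the left $A$-module $A/A\frak{m}$ is a block module with respect to the $\mathrm{Ext}$ relation. Since being a block module only concerns the underlying left $\Gamma$-module structure, it suffices to exhibit $A/A\frak{m}$ as a sum of finite-dimensional left $\Gamma$-submodules and then invoke Proposition \ref{prop:extmodule} on each piece.

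For each $a \in A$, I would focus on $W_a := (\Gamma a \Gamma + A\frak{m})/A\frak{m}$, which is a left $\Gamma$-submodule of $A/A\frak{m}$ and is a quotient of the $\Gamma$-bimodule $(\Gamma a \Gamma)/(\Gamma a \Gamma)\frak{m}$, since $(\Gamma a \Gamma)\frak{m} \subseteq A\frak{m}$. The right $\Gamma$-action on the latter quotient factors through $\Gamma/\frak{m}$, which is finite-dimensional by quasinoetherianity applied to $\frak{m} \in \fwords{B}$. Quasicentrality supplies finitely many right $\Gamma$-generators $a_1, \ldots, a_n$ of $\Gamma a \Gamma$, and their images span $(\Gamma a \Gamma)/(\Gamma a \Gamma)\frak{m}$ as a right $\Gamma/\frak{m}$-module. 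A finitely generated module over a finite-dimensional algebra is finite-dimensional as a $\Bbbk$-vector space, so $W_a$ is finite-dimensional. Proposition \ref{prop:extmodule} then says $W_a$ is a block module with respect to the $\mathrm{Ext}$ relation.

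To finish, I would write $A/A\frak{m} = \sum_{a \in A} W_a$ and observe that a sum of block submodules of an ambient $\Gamma$-module is itself a block module: for any $\Gamma$-submodule $U$ of a $\Gamma$-module $V$ one has $U(B) \subseteq V(B)$ for every $B$, so $A/A\frak{m} = \sum_{a} \bigoplus_{B} W_a(B) \subseteq \sum_{B} (A/A\frak{m})(B)$, and the sum of distinct block spaces is automatically direct. The only moving part in the argument is coordinating the two sides: quasicentrality is a right-module finite-generation hypothesis, and quasinoetherianity is exactly what collapses the right action to yield finite $\Bbbk$-dimension; once the finite-dimensional reduction is in place, Proposition \ref{prop:extmodule} does the real work and there is no deeper obstacle.
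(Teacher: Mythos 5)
Your proposal is correct and follows essentially the same route as the paper: both reduce to the finite-dimensional $\Gamma$-module built from $\Gamma a\Gamma$ modulo (something containing) $\Gamma a\frak{m}$, invoke quasicentrality plus quasinoetherianity for finite-dimensionality, apply Proposition~\ref{prop:extmodule} to get a block module, and then observe that $A/A\frak{m}$ is covered by the resulting block submodules. The only cosmetic difference is that you work directly with the submodule $W_a = (\Gamma a\Gamma + A\frak{m})/A\frak{m}$ and use that block submodules sit inside block spaces, whereas the paper phrases the last step via Proposition~\ref{prop:blockmaps} applied to the map $\frac{\Gamma a\Gamma}{\Gamma a\frak{m}} \to A/A\frak{m}$.
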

\begin{proof}
    Let $B \in \cfsclasses$, $\frak{m} \in \fwords{B}$, and $V = A/A\frak{m}$. Let $a \in A$. Consider the $\Gamma$-module map $\frac{\Gamma a \Gamma}{\Gamma a \frak{m}} \rightarrow A/A\frak{m}$. Since $\Gamma$ is quasinoetherian and quasicentral, $\frac{\Gamma a \Gamma}{\Gamma a \frak{m}}$ is finite-dimensional, and therefore a block module by Proposition \ref{prop:extmodule}. Now $a + A\frak{m}$ is in the image of this map, and therefore in $\bigoplus_{B \in \cfsclasses} V(B)$ by Proposition \ref{prop:blockmaps}.
\end{proof}

\subsection{Strong Harish-Chandra Block Modules and Block Subalgebras}\label{sec:strong}

In this section, we define strong block modules, which generalizes the requirement of finite-dimensional weight spaces.

\begin{definition}
A $\Gamma$-module $V$ is called a \emph{strong block module} (with respect to $\sim$) if $V$ is a block module with respect to $\sim$ and for each $B \in \cfsclasses$, there is $\frak{m} \in \fwords{B}$ so that $\frak{m}V(B) = 0$. If $V$ is also an $A$-module, we call $A$ a \emph{strong Harish-Chandra block module} (with respect to $\sim$). Let $\SHC(A; \Gamma, \sim)$ be the full subcategory of strong Harish-Chandra block modules with respect to $\sim$.
\end{definition}

The next lemma shows that a form of Fitting's Lemma holds for strong block modules.

\begin{lemma}[Fitting's Lemma]\label{Fitting}
If $V$ is a strong block module, then for each $B \in \cfsclasses$ we have the decomposition $$V = V(B) \oplus V'$$ where $\frak{m}V' = V'$ for all $\frak{m} \in \fwords{B}$ and $\frak{m}V(B) = 0$ for some $\frak{m} \in \fwords{B}$.
\end{lemma}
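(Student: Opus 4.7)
The natural candidate for the complementary summand is $V' := \bigoplus_{C \in \cfsclasses,\ C \neq B} V(C)$. Because $V$ is a block module, the decomposition $V = V(B) \oplus V'$ is immediate as $\Gamma$-modules, and the existence of some $\frak{m} \in \fwords{B}$ with $\frak{m}V(B) = 0$ is built into the definition of a strong block module. Thus the entire content of the lemma lies in verifying that $\frak{m}V' = V'$ for every $\frak{m} \in \fwords{B}$.

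Since the $\Gamma$-action respects the block decomposition of $V'$, this reduces to showing $\frak{m}V(C) = V(C)$ for each $C \neq B$ and each $\frak{m} \in \fwords{B}$. I would fix such a $C$ and, using the strong block property, pick $\frak{n} = \frak{n}_1 \cdots \frak{n}_k \in \fwords{C}$ with $\frak{n}V(C) = 0$. Writing $\frak{m} = \frak{m}_1 \cdots \frak{m}_\ell$ with each $\frak{m}_i \in B$, the plan is to prove the stronger claim that $\frak{m}_i V(C) = V(C)$ for each single factor $\frak{m}_i$; iterating from right to left then yields $\frak{m}V(C) = \frak{m}_1\cdots \frak{m}_{\ell-1} V(C) = \cdots = V(C)$. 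Granting the coprimality $\frak{m}_i + \frak{n} = \Gamma$, a short argument finishes the job: write $1 = x + y$ with $x \in \frak{m}_i$ and $y \in \frak{n}$ and compute $v = xv + yv = xv \in \frak{m}_i V(C)$ for $v \in V(C)$, using $yv = 0$.

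The technical heart of the proof, and the step I expect to be the main obstacle, is establishing the coprimality $\frak{m}_i + \frak{n} = \Gamma$ in the noncommutative setting. I plan to proceed as follows. Each element of $\cfs(\Gamma)$ is a maximal two-sided ideal, so any two distinct such ideals are coprime; since $B \neq C$ and $\frak{n}_j \in C$ for each $j$, we have $\frak{m}_i \neq \frak{n}_j$ and therefore $\frak{m}_i + \frak{n}_j = \Gamma$ for every $j$. Consequently $\Gamma = \prod_{j=1}^{k}(\frak{m}_i + \frak{n}_j)$, and on expanding this product I would note that every term containing at least one factor from $\frak{m}_i$ lies in $\frak{m}_i$; crucially this uses only that $\frak{m}_i$ is two-sided, so no commutation of $\frak{m}_i$ past later $\frak{n}_j$'s is required. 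The only summand of the expansion that escapes $\frak{m}_i$ is $\frak{n}_1 \cdots \frak{n}_k = \frak{n}$, giving $\Gamma \subseteq \frak{m}_i + \frak{n}$ as needed. The bookkeeping around two-sidedness is the only subtlety; once it is carefully tracked, the rest of the proof is a routine chain of inclusions.
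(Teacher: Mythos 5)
Your proof is correct, and it takes a genuinely different route from the paper. The paper's argument is structural: for $\frak{m}\in B$ and $v\in V'$ it considers the cyclic block module $\Gamma v$ and its quotient $\Gamma v/\frak{m}v$, observes via Proposition \ref{SES} that $B\notin\Supp(\Gamma v/\frak{m}v)$, and then applies Lemma \ref{blockfactors} to conclude that $S_\frak{m}$ cannot be a composition factor of the (cyclic, hence finite-dimensional) $\Gamma/\frak{m}$-module $\Gamma v/\frak{m}v$, forcing $\Gamma v=\frak{m}v$. Your argument instead proves the coprimality $\frak{m}_i+\frak{n}=\Gamma$ directly by expanding $\Gamma=\prod_{j}(\frak{m}_i+\frak{n}_j)$ and noting that every summand except $\frak{n}_1\cdots\frak{n}_k$ lands in the two-sided ideal $\frak{m}_i$, then writes $1=x+y$ and computes $v=xv$. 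Your approach is more elementary and self-contained, avoiding the composition-factor machinery entirely, and it makes the Chinese-remainder-type content of the statement explicit; the paper's approach reuses the earlier block-support lemmas and so fits the narrative flow of the section. One small remark: you invoke the strong block property to choose a uniform $\frak{n}\in\fwords{C}$ annihilating all of $V(C)$, but your argument goes through just as well element-by-element by choosing $\frak{n}_v\in\fwords{C}$ with $\frak{n}_v v=0$ for each $v\in V(C)$; in particular, like the paper's proof, the identity $\frak{m}V'=V'$ actually holds for any block module, and the strong hypothesis is only needed to guarantee $\frak{m}V(B)=0$ for some $\frak{m}\in\fwords{B}$.
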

\begin{proof}
Let $V = V(B) \oplus V'$ where $V' = \bigoplus_{C \neq B} V(C)$. Then $\frak{m} V(B) = 0$ for some $\frak{m} \in \fwords{B}$ by the definition of a strong block module. We show that $\frak{m}V' = V'$ for any $\frak{m} \in \fwords{B}$. It is sufficient to show this for any $\frak{m} \in B$.

Let $\frak{m} \in B$ and $v \in V'$. By Proposition \ref{SES}, $\Gamma v$ is a block module with $\Supp(\Gamma v) \subseteq \Supp(V')$, and consequently $\Gamma v/\frak{m} v$ is also a block module with $\Supp(\Gamma v / \frak{m}v) \subseteq \Supp(\Gamma v)$. In particular, $[\frak{m}]_\sim = B \not \in \Supp(\Gamma v / \frak{m} v)$. So by Lemma \ref{blockfactors}$, S_\frak{m} \not \in \Fact(\Gamma v / \frak{m} v)$ hence $\Gamma v = \frak{m} v \subseteq \frak{m}V'$. Since $v$ was arbitrary, we have $\frak{m}V' = V'$.
\end{proof}

We define a special case of a Harish-Chandra block subalgebra that we will need in order to define the category $\mathcal{A}$ in Section \ref{sec:category}.

\begin{definition}
We say $\Gamma$ is a \emph{strong Harish-Chandra block subalgebra} of $A$ (with respect to $\sim$) if:
\begin{itemize}
    \item $A/A\frak{m}$ is a strong Harish-Chandra block module for every $B \in \cfsclasses$ and $\frak{m} \in \fwords{B}$
    \item $A/\frak{l}A$ is a strong Harish-Chandra block module (as a right module) for every $C \in \cfsclasses$ and $\frak{l} \in \fwords{C}$
\end{itemize}
\end{definition}

\begin{remark}
    When $C \in \cfsclasses$ and $\frak{l} \in \fwords{C}$, we will always interpret $A/\frak{l}A$ as a right module. If $D \in \cfsclasses$, then $(\frac{A}{\frak{l}A})(D)$ should be interpreted as the block space of $A/\frak{l}A$ as a right module.
\end{remark}

\subsection{Upper Triangular Subalgebras}\label{subsec:hctri}

Consider again the example of the upper triangular algebra. Let $\{e, f, h\}$ denote the standard basis for $A_0 = U(\frak{sl}_2(\mathbb{C}))$ and $c \in Z(A_0)$ denote the Casimir element.

It will be useful to first see that $\Gamma_0 = \mathbb{C}[h,c]$ is a strong Harish-Chandra block subalgebra of $A_0$ with respect to the equality relation. Realizing $A_0$ as a generalized Weyl algebra \cite{bavula1992generalized} we have $A_0 = \bigoplus_{n \in \mathbb{Z}} X^n\Gamma_0$ where $$X^n = \begin{cases}
e^n & n > 0\\
1 & n = 0\\
f^{-n} & n < 0
\end{cases}.$$ Taking $\sigma$ to be the automorphism of $\Gamma_0$ that fixes $\mathbb{C}[c]$ and maps $h$ to $h-2$, we have $\frak{n}X^n = X^n\sigma^{-n}(\frak{n})$ for any $\frak{n} \in \cfs(\Gamma_0)$. For any $\frak{m} = (h - \lambda, c - \mu) \in \cfs(\Gamma_0)$ and $m \geq 0$, let $M = A_0/A_0\frak{m}^m$. For $n \in \mathbb{Z}$, $$\frac{X^n\Gamma_0 + A_0 \frak{m}^m}{A_0\frak{m}^m}$$ is the block space $M(\sigma^{n}(\frak{m}))$ and has finite basis $$\{X^n(h-\lambda)^i(c-\mu)^j + A_0 \frak{m}^m\}_{\substack{i,j \geq 0\\ i+j < m}}.$$ This demonstrates that $A_0/A_0\frak{m}^m$ is a strong Harish-Chandra block module with respect to the equality relation. A similar argument will show that $A_0/\frak{m}^m A_0$ is also a strong Harish-Chandra block module.

Now we will take $\mathbb{C}x$ to be the one dimensional $(\Gamma_0,\Gamma_0)$-bimodule where $h,c,e,f$ act by $0$, so that $$\Gamma = \begin{bmatrix} \Gamma_0 & \mathbb{C}x \\ 0 & \Gamma_0 \end{bmatrix} \leq \begin{bmatrix} A_0 & \mathbb{C}x \\ 0 & A_0 \end{bmatrix} = A$$ are both algebras under addition and multiplication of matrices. Let $\sim$ be the $\mathrm{Ext}$ relation on $\cfs(\Gamma)$. Recall that \begin{equation}\cfsclasses = \{ \{\overline{\frak{z}}, \underline{\frak{z}}\} \} \cup \{ \{\overline{\frak{m}}\}\}_{\frak{m} \in \cfs(\Gamma_0)\setminus \{\frak{z}\}} \cup \{\{\underline{\frak{m}}\} \}_{\frak{m} \in \cfs(\Gamma_0)\setminus \{\frak{z}\}}.\end{equation}
We now aim to show that $\Gamma$ is a strong Harish-Chandra block subalgebra of $A$ with respect to $\sim$.

Let $\frak{m} \in \cfs(\Gamma_0)\setminus \{\frak{z}\}$ and $m \geq 0$. The module $$A/A\overline{\frak{m}}^m = \begin{bmatrix} A_0 & \mathbb{C}x \\ 0 & A_0 \end{bmatrix} / \begin{bmatrix} A_0\frak{m}^m & \mathbb{C}x \\ 0 & A_0 \end{bmatrix} \cong \begin{bmatrix} A_0/A_0\frak{m}^m & 0\\ 0 & 0 \end{bmatrix},$$ so $A/A\overline{\frak{m}}^m$ is a strong Harish-Chandra block module with respect to the equality relation, and therefore a strong Harish-Chandra block module with respect to $\sim$, with $\Blocks(A/A\overline{\frak{m}}^m) = \{[\overline{\frak{n}}]_\sim: \frak{n} \in \Supp(A_0/A_0\frak{m}^m)\}$. Similarly, $$A/A\underline{\frak{m}}^m \cong \begin{bmatrix} 0 & 0\\ 0 & A_0/A_0\frak{m}^m \end{bmatrix}$$ is a strong Harish-Chandra block module, with $\Blocks(A/A\underline{\frak{m}}^m) = \{[\underline{\frak{n}}]_\sim: \frak{n} \in \Supp(A_0/A_0\frak{m}^m)\}$. An identical computation shows that $A/\overline{\frak{m}}^m A$ and $A/\underline{\frak{m}}^m A$ are also strong Harish-Chandra block modules.

To complete the proof that $\Gamma$ is a strong Harish-Chandra block subalgebra of $A$, we need to check that $A/A \frak{p}$ and $A/\frak{p} A$ are strong Harish-Chandra block modules for all $\frak{p} \in \fwords{\{\overline{\frak{z}}, \underline{\frak{z}}\}}$. Note that if $n,m \geq 1$, then $$\overline{\frak{z}}^n \underline{\frak{z}}^m = \left[\begin{array}{cc} \frak{z}^n & 0 \\
    0 & \frak{z}^m
\end{array} \right].$$ So $(\overline{\frak{z}}^n \underline{\frak{z}}^m)\overline{\frak{z}} = \overline{\frak{z}}^{n+1} \underline{\frak{z}}^m$ and $\underline{\frak{z}}(\overline{\frak{z}}^n \underline{\frak{z}}^m) = \overline{\frak{z}}^n \underline{\frak{z}}^{m+1}$. So $\fwords{\{\overline{\frak{z}}, \underline{\frak{z}}\}} = \{\overline{\frak{z}}^n \underline{\frak{z}}^m: n,m \geq 1\} \cup \{\underline{\frak{z}}^m\overline{\frak{z}}^n: n,m \geq 0\}$.

When $n,m \geq 1$, $$A/A\underline{\frak{z}}^m\overline{\frak{z}}^n \cong \left[\begin{array}{cc}
    A_0/A_0 \frak{z}^n & 0 \\
    0 & A_0/A_0 \frak{z}^m
\end{array}\right] = \left[\begin{array}{cc}
    A_0/A_0 \frak{z}^n & 0 \\
    0 & 0
\end{array}\right] \oplus \left[\begin{array}{cc}
    0 & 0 \\
    0 & A_0/A_0 \frak{z}^m
\end{array}\right]$$ is a strong Harish-Chandra block module, with 
$$\Blocks(A/A\underline{\frak{z}}^m\overline{\frak{z}}^n) = \{[\overline{\frak{n}}]_\sim: \frak{n} \in \Supp(A_0/A_0\frak{z}^n)\} \cup \{[\underline{\frak{m}}]_\sim: \frak{m} \in \Supp(A_0/A_0\frak{z}^m)\}.$$

Now let $n,m \geq 0$. Say $N = A_0/A_0 \frak{z}^n$ and $M = A_0/A_0 \frak{z}^m$ with $i,j \geq 0$ so that $\frak{z}^iN(\frak{z}) = 0$ and $\frak{z}^jM(\frak{z}) = 0$. Recall that $\frak{z}M(\frak{m}) = M(\frak{m})$ for $\frak{m} \in \cfs(\Gamma_0) \setminus \{\frak{z}\}$. So \begin{align*}V = A/A\overline{\frak{z}}^n \underline{\frak{z}}^m &\cong \left[\begin{array}{cc}
    A_0/A_0 \frak{z}^n & \mathbb{C}x \\
    0 & A_0/A_0 \frak{z}^m
\end{array}\right] \\
&= \Bigg( \bigoplus_{\frak{n} \in \cfs(\Gamma_0) \setminus \{\frak{z}\}} \left[\begin{array}{cc}
    N(\frak{n}) & 0 \\
    0 & 0
\end{array}\right] \Bigg) \oplus \left[\begin{array}{cc}
    N(\frak{z}) & \mathbb{C}x \\
    0 & M(\frak{z})
\end{array}\right] \oplus \Bigg( \bigoplus_{\frak{m} \in \cfs(\Gamma_0) \setminus \{\frak{z}\}} \left[\begin{array}{cc}
    0 & 0 \\
    0 & M(\frak{m})
\end{array}\right] \Bigg)\\
&= \Bigg( \bigoplus_{\frak{n} \in \cfs(\Gamma_0) \setminus \{\frak{z}\}} V(\overline{\frak{n}}) \Bigg) \oplus V(\{\overline{\frak{z}}, \underline{\frak{z}}\}) \oplus \Bigg( \bigoplus_{\frak{m} \in \cfs(\Gamma_0) \setminus \{\frak{z}\}} V(\underline{\frak{m}}) \Bigg)\end{align*} where $\overline{\frak{z}}^{i+1}\underline{\frak{z}}^{j+1}V(\{\overline{\frak{z}}, \underline{\frak{z}}\}) = 0$.
So $A/A\overline{\frak{z}}^n \underline{\frak{z}}^m$ is a strong Harish-Chandra block module with $$\Blocks(A/A\overline{\frak{z}}^n\underline{\frak{z}}^m) = \{[\overline{\frak{n}}]_\sim: \frak{n} \in \Supp(A_0/A_0\frak{z}^n)\} \cup \{\{\overline{\frak{z}}, \underline{\frak{z}}\}\} \cup \{[\underline{\frak{m}}]_\sim: \frak{m} \in \Supp(A_0/A_0\frak{z}^m)\}.$$ As before, similar arguments will suffice for $A/\frak{p}A$ when $\frak{p} \in \fwords{\{\overline{\frak{z}}, \underline{\frak{z}}\}}$.

\section{Category Equivalence}\label{sec:category}
In this section, we establish categories which are equivalent to $\HC(A; \Gamma, \sim)$ or $\SHC(A; \Gamma, \sim)$. Throughout this section, we will need to assume that $\Gamma$ is a strong Harish-Chandra block subalgebra of $A$.
\subsection{$\mathcal{A}$}

We will define a category $\mathcal{A}$ with objects $\cfsclasses$ and morphisms \begin{equation}\mathcal{A}(B, C) = \lim_{\frak{n} \in \fwords{C} \frak{m} \in \fwords{B}} \frac{A}{\frak{n}A + A\frak{m}} = \varprojlim \frac{A}{\frak{n}A + A\frak{m}},\end{equation} with the goal of associating Harish-Chandra block modules with topologically-enriched functors from $\mathcal{A}$ to $\mathrm{Vect}_{\Bbbk}$. The following lemma will be important for defining composition in $\mathcal{A}$.

\begin{lemma}\label{doublecosetiso}
Suppose $\Gamma$ is a strong Harish-Chandra block subalgebra of $A$. Let $B, C, D \in \cfsclasses,$ $\frak{m} \in \fwords{B}$ and $\frak{l} \in \fwords{D}$. There is $\frak{n} \in \fwords{C}$ so that \begin{equation}\frac{A}{\frak{l} A + A \frak{n}} \cong (\frac{A}{\frak{l} A})(C) \qquad \text{ and } \qquad \frac{A}{\frak{n} A + A \frak{m}} \cong (\frac{A}{A \frak{m}})(C)\end{equation} as $(\Gamma, \Gamma)$-bimodules.
\end{lemma}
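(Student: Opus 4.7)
The plan is to handle the two isomorphisms separately using Fitting's Lemma (Lemma \ref{Fitting}) applied to the two different strong Harish-Chandra block modules $A/A\frak{m}$ (as a left module) and $A/\frak{l}A$ (as a right module), and then combine the resulting candidate ideals by taking their product.

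First I would address the second isomorphism. Since $\Gamma$ is a strong Harish-Chandra block subalgebra, $A/A\frak{m}$ is a strong block module under the left $\Gamma$-action. By Fitting's Lemma applied to the block $C$, we get a decomposition $A/A\frak{m} = (A/A\frak{m})(C) \oplus V'$ together with an element $\frak{n}_2 \in \fwords{C}$ that annihilates $(A/A\frak{m})(C)$ from the left, and such that $\frak{p} V' = V'$ for every $\frak{p} \in \fwords{C}$. Consequently, for any $\frak{n} \in \fwords{C}$ that is a right multiple of $\frak{n}_2$, we have $\frak{n}(A/A\frak{m}) = \frak{n}(A/A\frak{m})(C) + \frak{n} V' = 0 + V' = V'$, so $(A/A\frak{m}) / \frak{n}(A/A\frak{m}) \cong (A/A\frak{m})(C)$ via the Fitting projection. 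The left-hand side is exactly $A/(\frak{n}A + A\frak{m})$ by the third isomorphism theorem (noting that $\frak{n}(A/A\frak{m})$ is the image of $\frak{n}A$ in $A/A\frak{m}$).

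An entirely symmetric argument applied to $A/\frak{l}A$ (now as a right module over $\Gamma$) produces some $\frak{n}_3 \in \fwords{C}$ with $(A/\frak{l}A)(C)\frak{n}_3 = 0$, and for any $\frak{n} \in \fwords{C}$ that is a left multiple of $\frak{n}_3$, we obtain $A/(\frak{l}A + A\frak{n}) \cong (A/\frak{l}A)(C)$. To satisfy both conditions simultaneously, I take $\frak{n} = \frak{n}_3 \frak{n}_2 \in \fwords{C}$: this is a left multiple of $\frak{n}_3$ and a right multiple of $\frak{n}_2$, so both of the above isomorphisms apply with this common choice of $\frak{n}$.

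The only remaining check is that the isomorphisms respect the $(\Gamma,\Gamma)$-bimodule structure. This reduces to observing that both $(A/A\frak{m})(C)$ and its Fitting complement $V'$ (and analogously for $A/\frak{l}A$) are not just $\Gamma$-submodules on one side but are actually sub-bimodules: if $\frak{p} v = 0$ then $\frak{p}(v\gamma) = (\frak{p} v)\gamma = 0$, so the left block space for $A/A\frak{m}$ is closed under the right $\Gamma$-action, and the dual argument works for $A/\frak{l}A$. Hence the Fitting projections are bimodule maps, and the induced isomorphisms are isomorphisms of $(\Gamma,\Gamma)$-bimodules. The main subtlety is keeping track of which $\Gamma$-action is being used for each block decomposition, but once one fixes the convention (left action for $A/A\frak{m}$, right action for $A/\frak{l}A$) everything lines up, and the combinatorial trick of multiplying the two witness ideals in the correct order is what makes a single $\frak{n}$ work for both required isomorphisms.
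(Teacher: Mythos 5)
Your proof is correct and follows essentially the same route as the paper's: both hinge on Fitting's Lemma (Lemma \ref{Fitting}) applied to the strong block modules $A/A\frak{m}$ and $A/\frak{l}A$, and the key computation is the same — showing that $\frak{n}A + A\frak{m}$ is exactly the preimage of the Fitting complement $V' = \bigoplus_{P\neq C}(A/A\frak{m})(P)$, so that passing to the quotient recovers the block space $(A/A\frak{m})(C)$. Your phrasing $\frak{n}(A/A\frak{m}) = V'$ is the same identity the paper expresses as $\frak{n}A + A\frak{m} = \sum_{P\neq C}A_P$. One point in your favor: you explicitly construct a single $\frak{n} = \frak{n}_3\frak{n}_2$ that satisfies both annihilation conditions simultaneously, whereas the paper fixes $\frak{n}$ with only $\frak{n}(\tfrac{A}{A\frak{m}})(C)=0$ and then appeals to a "symmetric argument" without noting that $\frak{n}$ must be shrunk to also kill $(\tfrac{A}{\frak{l}A})(C)$ on the right. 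A very minor simplification: since $\frak{n}_2,\frak{n}_3$ are two-sided ideals, $\frak{n}_3\frak{n}_2 \subseteq \frak{n}_2\cap\frak{n}_3$ automatically, so either product (or the intersection) works; the "left multiple / right multiple" bookkeeping isn't actually needed. Your bimodule observation — that the left block spaces are closed under the right $\Gamma$-action and vice versa — is the right reason the isomorphisms are bimodule maps.
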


\begin{proof}
Firstly, for any $\frak{n} \in \fwords{C}$, we have natural maps:
\[
\begin{tikzcd}
  A/A\frak{m} \arrow[r] \arrow[d]& \frac{A}{\frak{n} A + A \frak{m}} \arrow[d, equals]\\
  (\frac{A}{A \frak{m}})(C) \arrow[r] & \big(\frac{A}{\frak{n} A + A \frak{m}}\big)(C)
\end{tikzcd}
\]
Let $\frak{n} \in \fwords{C}$ so that $\frak{n}(\frac{A}{A \frak{m}})(C) = 0$. Since $A/A \frak{m}$ is a Harish-Chandra block module, there are $A_P \supseteq A \frak{m}$ so that $A_P/A\frak{m} = (\frac{A}{A \frak{m}})(P)$ for $P \in \cfsclasses$. We have $\frak{n} A_C \subseteq A\frak{m}$ and $\frak{n}A_P + A\frak{m} = A_P$ for $P \neq C$, by Lemma \ref{Fitting}. Since $A = \sum A_P$, we have $\frak{n} A + A \frak{m} = \sum_{P \neq C} A_P$ which maps to zero under $A/A\frak{m} \rightarrow (\frac{A}{A \frak{m}})(C)$. This induces a map $\frac{A}{\frak{n} A + A \frak{m}} \rightarrow (\frac{A}{A \frak{m}})(C)$.
So we have:
\[
\begin{tikzcd}
  & \arrow[dl, twoheadrightarrow] A/A\frak{m} \arrow[d, twoheadrightarrow] \arrow[dr, twoheadrightarrow]\\
  (\frac{A}{A \frak{m}})(C) \arrow[r] & \frac{A}{\frak{n} A + A\frak{m}} \arrow[r] & (\frac{A}{A \frak{m}})(C)
\end{tikzcd}
\]
By uniqueness, these maps are inverses. It will be useful in Appendix \ref{sec:appendix} that the isomorphism $\frac{A}{\frak{n} A + A \frak{m}} \cong (\frac{A}{A \frak{m}})(C)$ is natural in $\frak{m}$ and $\frak{n}$.
A symmetric argument shows that $\frac{A}{\frak{l} A + A \frak{n}} \cong (\frac{A}{\frak{l} A})(C)$.
\end{proof}

We can now define the composition in $\mathcal{A}$. First, for $\alpha \in \mathcal{A}(B,C)$, let $\alpha_{\frak{m},\frak{n}}$ denote the image under the projection $\mathcal{A}(B,C) \rightarrow \frac{A}{\frak{n} A + A \frak{m}}$.

\begin{definition}\label{def:comp}
The composition map $\mathcal{A}(C, D) \otimes_\Gamma \mathcal{A}(B, C) \rightarrow \mathcal{A}(B, D)$ is given as follows:
\begin{enumerate}
\item For each $\frak{m} \in \fwords{B}$ and $\frak{l} \in \fwords{D}$, pick $\frak{n} \in \fwords{C}$ so that $\frac{A}{\frak{l} A + A \frak{n}} \cong (\frac{A}{\frak{l} A})(C)$ and $\frac{A}{\frak{n} A + A \frak{m}} \cong (\frac{A}{A \frak{m}})(C)$.
\item Given $\alpha \in \mathcal{A}(B, C)$ and $\beta \in \mathcal{A}(C, D)$, take $a_0, b_0 \in A$ so that:
\begin{align*}
    \alpha_{\frak{m},\frak{n}} &= a_0 + \frak{n}A + A \frak{m} & & a_0 + A \frak{m} \in (\frac{A}{A \frak{m}})(C)\\
    \beta_{\frak{n},\frak{l}} &= b_0 + \frak{l}A + A \frak{m} & & b_0 + \frak{l} A \in (\frac{A}{\frak{l} A})(C)
\end{align*}
\item Then define $(\beta \circ \alpha)_{\frak{m},\frak{l}} = b_0a_0 + \frak{l} A + A \frak{m}$.
\end{enumerate}
\end{definition}

\begin{proposition}\label{prop:comp}
    The proposed composition map is a well defined $(\Gamma, \Gamma)$-bimodule map, and makes $\mathcal{A}$ into a category.
\end{proposition}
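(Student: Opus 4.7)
My plan is to verify the claim in four steps: (i) the element $(\beta\circ\alpha)_{\frak{m},\frak{l}} = b_0 a_0 + \frak{l}A + A\frak{m}$ is well-defined for fixed $\frak{m}, \frak{l}$, (ii) this family is compatible with the projections in the inverse system, so defines an element of $\mathcal{A}(B,D)$, (iii) the resulting composition is a $(\Gamma,\Gamma)$-bimodule map that descends to the tensor product over $\Gamma$, and (iv) the composition is associative and unital.

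For (i), the key observation is that the conditions $a_0 + A\frak{m} \in (A/A\frak{m})(C)$ and $b_0 + \frak{l}A \in (A/\frak{l}A)(C)$ pin down $a_0$ modulo $A\frak{m}$ and $b_0$ modulo $\frak{l}A$ (not merely modulo $\frak{n}A + A\frak{m}$ and $\frak{l}A + A\frak{n}$): they single out the unique preimages of $\alpha_{\frak{m},\frak{n}}$ and $\beta_{\frak{n},\frak{l}}$ under the isomorphisms of Lemma \ref{doublecosetiso}. Consequently $b_0 a_0$ is well-defined modulo $\frak{l}A + A\frak{m}$ because $b_0(A\frak{m}) \subseteq A\frak{m}$ and $(\frak{l}A)a_0 \subseteq \frak{l}A$. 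Independence from the choice of $\frak{n}$ then follows from the naturality in $\frak{n}$ of Lemma \ref{doublecosetiso} (the point emphasized at the end of its proof): if $\frak{n}'$ is another valid choice, then $\frak{n}\frak{n}' \in \fwords{C}$ is again valid, and the canonical lifts with respect to $\frak{n}$, $\frak{n}'$, and $\frak{n}\frak{n}'$ determine the same element of $(A/A\frak{m})(C)$, hence agree modulo $A\frak{m}$.

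For (ii), given $\frak{m}_1 \supseteq \frak{m}_2$ and $\frak{l}_1 \supseteq \frak{l}_2$, I will use Proposition \ref{SES} together with Proposition \ref{prop:blockmaps} to note that the surjection $A/A\frak{m}_2 \twoheadrightarrow A/A\frak{m}_1$ is a map of Harish-Chandra block modules and hence restricts to a surjection on $C$-block spaces; thus any $\frak{n}$ and any lift $a_0$ valid for $(\frak{m}_2,\frak{l}_2)$ automatically work for $(\frak{m}_1,\frak{l}_1)$, from which compatibility under the projection $A/(\frak{l}_2 A + A\frak{m}_2) \twoheadrightarrow A/(\frak{l}_1 A + A\frak{m}_1)$ is immediate. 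For (iii), since $(A/A\frak{m})(C)$ and $(A/\frak{l}A)(C)$ are $(\Gamma,\Gamma)$-subbimodules, left or right multiplication by $\gamma \in \Gamma$ preserves the block conditions on representatives, so the tensor-balancing identity $(\beta\gamma) \otimes \alpha \mapsto \beta \otimes (\gamma\alpha)$ and the left/right $\Gamma$-equivariance all reduce to the associativity of multiplication in $A$ applied to the representatives $a_0, b_0, \gamma a_0, b_0 \gamma$, etc.

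For (iv), the unit $1_B \in \mathcal{A}(B,B)$ is the image of $1 \in A$, and one checks $1 + A\frak{m} \in (A/A\frak{m})(B)$ and $1 + \frak{l}A \in (A/\frak{l}A)(B)$ for any $\frak{m},\frak{l} \in \fwords{B}$. To verify $1_C \circ \alpha = \alpha$, I would choose $\frak{n}$ such that $\frak{n}\frak{l}$ also qualifies in Definition \ref{def:comp}, pick the lift $a_0$ of $\alpha_{\frak{m},\frak{n}\frak{l}}$ associated by Lemma \ref{doublecosetiso} to the canonical element of $(A/A\frak{m})(C)$, and then read off $a_0 + \frak{l}A + A\frak{m} = \alpha_{\frak{m},\frak{l}}$ from the naturality of that isomorphism. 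Associativity, after choosing a common middle $\frak{n} \in \fwords{C}$ and $\frak{l} \in \fwords{D}$ and sufficiently refined lifts $a_0, b_0, c_0$, should reduce to $c_0(b_0 a_0) = (c_0 b_0) a_0$ in $A$. The main technical obstacle will be associativity: the intermediate product $b_0 a_0$ is not automatically the representative of $(\beta\circ\alpha)_{\frak{m},\frak{l}}$ satisfying the $D$-block condition required of it for the outer composition. I expect the cleanest fix is to invoke Fitting's Lemma \ref{Fitting} to split off the ``non-$D$-block'' piece of $b_0 a_0$, and then to show, using the right-block condition $c_0 \frak{l} \subseteq \frak{k}A$, that this piece is absorbed after multiplication by $c_0$ modulo $\frak{k}A + A\frak{m}$, producing the same answer as $(\gamma \circ \beta) \circ \alpha$.
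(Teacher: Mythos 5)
Your steps (i)–(iii) are essentially sound and track the paper's argument. In particular, the observation that the canonical lifts $a_0$, $b_0$ are determined modulo $A\frak{m}$ and $\frak{l}A$ respectively (not merely modulo the larger ideals) is exactly the right point, and your use of the naturality clause at the end of Lemma \ref{doublecosetiso} to get independence from $\frak{n}$ matches the paper. For (ii), the paper instead exhibits explicit commutative diagrams relating the $(\frak{m},\frak{l})$- and $(\frak{m}',\frak{l}')$-components, but your framing via Propositions \ref{SES} and \ref{prop:blockmaps} amounts to the same thing. For (iii), observing that the block spaces $(\tfrac{A}{A\frak{m}})(C)$ and $(\tfrac{A}{\frak{l}A})(C)$ are $(\Gamma,\Gamma)$-subbimodules, so the balancing condition reduces to associativity in $A$, is correct.

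Your treatment of (iv) is where there is a genuine gap, and it is not merely the one you flag. Two issues. First, you do not need Fitting's Lemma \ref{Fitting} at all to split off the ``non-$D$-block piece'': if $c_0$ is the canonical $D$-block lift of $(\beta\circ\alpha)_{\frak{m},\frak{l}}$, then $c_0 - b_0a_0 \in \frak{l}A + A\frak{m}$ simply because both are preimages of the same element of $\tfrac{A}{\frak{l}A + A\frak{m}}$; the absorption $d_0(c_0 - b_0a_0)\in \frak{p}A+A\frak{m}$ follows directly from the right-block condition $d_0\frak{l}\subseteq\frak{p}A$. Second, and more seriously: you propose to ``choose a common middle $\frak{n}\in\fwords{C}$ and $\frak{l}\in\fwords{D}$'' for both parenthesizations. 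But the requirement that $\frak{n}$ kill $(\tfrac{A}{\frak{l}A})(C)$ depends on $\frak{l}$, while the requirement that $\frak{l}$ kill $(\tfrac{A}{A\frak{n}})(D)$ depends on $\frak{n}$; there is a circular dependency, and one cannot in general pick a single $(\frak{n},\frak{l})$ satisfying all six killing conditions needed for both orders of composition. The paper's proof of Proposition \ref{associativity} avoids this by \emph{not} insisting on a common middle choice: it builds a chain $\frak{l}\supseteq\frak{l}'$, $\frak{n}\supseteq\frak{n}'$ with separate lifts $d_0,c_0,b_0,a_0$ and $z_0,w_0,y_0,x_0$ for the two parenthesizations, and then telescopes five congruences $(d_0-z_0)c_0,\ z_0(c_0-b_0a_0),\ z_0b_0(a_0-x_0),\ z_0(b_0-y_0)x_0,\ (z_0y_0-w_0)x_0 \in \frak{p}A+A\frak{m}$ to conclude $d_0c_0 \equiv w_0x_0$. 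Without that telescoping device, your plan does not close, so as written (iv) is a gap rather than a proof.
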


The proof of this proposition is provided in Appendix \ref{sec:appendix}.

\subsection{$\amod$}

We now enrich the categories $\mathcal{A}$ and $\mathbf{Vect}_\Bbbk$ over the category of topological vector spaces, so that we may consider the category of enriched functors from $\mathcal{A}$ to $\mathbf{Vect}_\Bbbk$.

Since $\mathcal{A}(B, C) = \varprojlim \frac{A}{\frak{n} A + A \frak{m}}$, we give $\mathcal{A}(B, C)$ the limit topology. (That is, the topology with the fewest open sets such that the projections $\mathcal{A}(B,C) \rightarrow \frac{A}{\frak{n} A + A \frak{m}}$ are continuous for all $\frak{m} \in \fwords{B}$ and $\frak{n} \in \fwords{C}$ when the $\frac{A}{\frak{n} A + A \frak{m}}$ are given the discrete topology.) We will enrich $\mathbf{Vect}_\Bbbk$ over the category of topological vector spaces with respect to two topologies. Firstly, we will take the discrete topology on $\Hom_\Bbbk(V, W)$. Secondly, note that any vector space $V$ is the colimit of its finite-dimensional subspaces. This gives a realization of $\Hom_\Bbbk(V,W)$ as the limit of $\Hom_\Bbbk(U,W)$ across finite-dimensional subspaces $U$ of $V$. We will make use of the limit topology this gives $\Hom_\Bbbk(V,W)$.

The following result is standard for limits of topological vector spaces.

\begin{lemma}\label{discretelemma}
Let $X$ be a topological vector space under the discrete topology. Then a linear map $f: \mathcal{A}(B, C) \rightarrow X$ is continuous if and only if there are $\frak{m} \in \fwords{B}$ and $\frak{n} \in \fwords{C}$ so that $f$ factors:
\[
\begin{tikzcd}
  \mathcal{A}(B, C) \arrow[d] \arrow[r]& X\\
  \frac{A}{\frak{n}A + A \frak{m}} \arrow[ur, dashed]
\end{tikzcd}\]
\end{lemma}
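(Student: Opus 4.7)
The plan is to prove the two implications separately. The backward direction is the easy one: the structural projection $p_{\frak{m},\frak{n}} : \mathcal{A}(B,C) \to \frac{A}{\frak{n}A + A\frak{m}}$ is continuous by the very definition of the limit topology, and any linear map out of a discrete topological vector space is automatically continuous, so if $f = g \circ p_{\frak{m},\frak{n}}$ for some $g$, then $f$ is a composition of continuous maps.

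For the forward direction, I would begin from the observation that since $X$ is discrete, $\{0\} \subseteq X$ is open, hence $\ker(f) = f^{-1}(0)$ is an open neighborhood of $0$ in $\mathcal{A}(B,C)$. A subbase for the limit topology on $\mathcal{A}(B,C)$ is given by the sets $p_{\frak{m}',\frak{n}'}^{-1}(U)$ for $U$ open in the discrete quotient $\frac{A}{\frak{n}'A + A\frak{m}'}$, and since the quotients are discrete, the basic open neighborhoods of $0$ can be taken to be finite intersections $\bigcap_{i=1}^{k} \ker(p_{\frak{m}_i,\frak{n}_i})$. So there exist $\frak{m}_1,\ldots,\frak{m}_k \in \fwords{B}$ and $\frak{n}_1,\ldots,\frak{n}_k \in \fwords{C}$ with $\bigcap_{i=1}^{k} \ker(p_{\frak{m}_i,\frak{n}_i}) \subseteq \ker(f)$.

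The next step is to collapse the finite intersection into a single projection. Set $\frak{m} := \frak{m}_1 \cdots \frak{m}_k \in \fwords{B}$ and $\frak{n} := \frak{n}_1 \cdots \frak{n}_k \in \fwords{C}$, using that $\fwords{B}$ and $\fwords{C}$ are closed under multiplication. Since $A\frak{m} \subseteq A\frak{m}_i$ and $\frak{n}A \subseteq \frak{n}_i A$ for each $i$, there are natural quotient maps $\frac{A}{\frak{n}A + A\frak{m}} \twoheadrightarrow \frac{A}{\frak{n}_i A + A\frak{m}_i}$ through which $p_{\frak{m}_i,\frak{n}_i}$ factors as $p_{\frak{m}_i,\frak{n}_i}$ equals this surjection composed with $p_{\frak{m},\frak{n}}$. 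Hence $\ker(p_{\frak{m},\frak{n}}) \subseteq \bigcap_i \ker(p_{\frak{m}_i,\frak{n}_i}) \subseteq \ker(f)$, so $f$ descends to a well-defined linear map on the image of $p_{\frak{m},\frak{n}}$ inside $\frac{A}{\frak{n}A + A\frak{m}}$.

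The final step is to extend this linear map on $p_{\frak{m},\frak{n}}(\mathcal{A}(B,C))$ to a linear map $g : \frac{A}{\frak{n}A + A\frak{m}} \to X$ defined on the whole quotient, which is achieved by picking a vector-space complement of the image and declaring $g$ to be zero there. Then $g \circ p_{\frak{m},\frak{n}} = f$, as required. The only subtle point, and the one I would be most careful about, is that $p_{\frak{m},\frak{n}}$ need not be surjective in general (inverse limits of surjections of vector spaces need not surject onto each term), so the factoring must first be obtained on the image of $p_{\frak{m},\frak{n}}$ and then extended by linearity; once this is noted, the argument is a direct application of the universal property of the subspace topology associated to the limit.
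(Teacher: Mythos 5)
The paper states this lemma without proof, merely remarking that the result is ``standard for limits of topological vector spaces,'' so there is no proof in the paper to compare against. Your argument is a correct and complete rendition of the standard proof, and you rightly flag the two genuinely subtle points: that the finitely many kernels from the subbasic description of the open neighborhood of $0$ can be combined into a single $\ker(p_{\frak{m},\frak{n}})$ by taking products $\frak{m}=\frak{m}_1\cdots\frak{m}_k$, $\frak{n}=\frak{n}_1\cdots\frak{n}_k$ (using that $\fwords{B}$, $\fwords{C}$ are closed under products and that products of two-sided ideals are contained in each factor), and that $p_{\frak{m},\frak{n}}$ need not be surjective, so the factorization must first be constructed on its image and then extended by a vector-space complement.
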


With the two enrichments of $\mathbf{Vect}_\Bbbk$, we now define profinite and discrete $\mathcal{A}$-modules as enriched functors from $\mathcal{A}$ to $\mathbf{Vect}_\Bbbk$. We will refer to a functor from $\mathcal{A}$ to $\mathbf{Vect}_\Bbbk$ as an $\mathcal{A}$-module.

\begin{definition}\phantom{X}
\begin{enumerate}
    \item A \emph{profinite $\mathcal{A}$-module} is a functor $F: \mathcal{A} \rightarrow \mathbf{Vect}_\Bbbk$ such that each $F_{B,C}: \mathcal{A}(B, C) \rightarrow \Hom_\Bbbk(F(B), F(C))$ is a continuous linear map with respect to the limit topology on $\Hom_\Bbbk(F(B), F(C))$. Let $\amod$ denote the full subcategory of profinite $\mathcal{A}$-modules.
    \item A \emph{discrete $\mathcal{A}$-module} is a functor $F: \mathcal{A} \rightarrow \mathbf{Vect}_\Bbbk$ such that each $F_{B,C}: \mathcal{A}(B, C) \rightarrow \Hom_\Bbbk(F(B), F(C))$ is a continuous linear map with respect to the discrete topology on $\Hom_\Bbbk(F(B), F(C))$. Let $\samod$ denote the full subcategory of discrete $\mathcal{A}$-modules.
\end{enumerate}
 For ease of notation, when $a \in A$, we write $F_{B, C}(a)$ to mean $F$ applied to the image $(a)$ of $a$ in $\mathcal{A}(B, C)$.
\end{definition}

\subsection{Equivalence}

In this section, we will prove the equivalences of Theorem \ref{thm:main2}. First, we show that to every profinite $\mathcal{A}$-module $F$, there is an associated Harish-Chandra block module $\mathcal{H}(F)$. Furthermore, if $F$ is a discrete $\mathcal{A}$-module, then $\mathcal{H}(F)$ is a strong Harish-Chandra block module.

\begin{proposition}\label{functorobjects}
Let $\Gamma$ be a strong Harish-Chandra block subalgebra of $A$. If $F \in \amod$, then $\mathcal{H}(F) := \bigoplus F(B)$ is a Harish-Chandra block module with action $a.v := \sum_C F_{B, C}(a)v$ for $a \in A$ and $v \in F(B)$. Furthermore, if $F \in \samod$, then $\mathcal{H}(F)$ is a strong Harish-Chandra block module.
\end{proposition}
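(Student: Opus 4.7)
The plan is to split the proof into three parts: well-definedness of the proposed $A$-action, identification of the $F(B)$'s with the block spaces of $\mathcal{H}(F)$, and the strong case.

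First, for well-definedness, I would fix $v \in F(B)$ and apply continuity of $F_{B,B}$ (limit topology) to the finite-dimensional subspace $\Bbbk v$; Lemma \ref{discretelemma} yields $\frak{m}_v \in \fwords{B}$ with $F_{B,B}(\gamma) v = 0$ for $\gamma \in \frak{m}_v$. The crucial step is upgrading this to $F_{B,C}(A\frak{m}_v) v = 0$ for every $C \in \cfsclasses$, which I would do by establishing the identity $(a\gamma) = (a) \circ (\gamma)$ in $\mathcal{A}(B, C)$, where $(\gamma) \in \mathcal{A}(B, B)$. Writing Definition \ref{def:comp} at level $(\frak{m}, \frak{l})$ with a representative $a_0$ of the $B$-block of $a + \frak{l}A$, the discrepancy $a\gamma - a_0\gamma = (a - a_0)\gamma$ lies in $A\frak{n}\gamma + \frak{l}A$ for the middle ideal $\frak{n}$; refining $\frak{n}$ to $\frak{n} \cdot \frak{m}$ (still valid in Lemma \ref{doublecosetiso}, since it only shrinks the middle ideal) absorbs $A\frak{n}\gamma$ into $A\frak{m}$, using that $\frak{n}\frak{m}\gamma \subseteq \frak{m}$ because $\frak{m}$ is two-sided in $\Gamma$. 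Functoriality then gives $F_{B,C}(a\gamma) v = F_{B,C}(a) F_{B,B}(\gamma) v = 0$ for $\gamma \in \frak{m}_v$. So $F_{B,C}(-)v$ descends to $A/A\frak{m}_v$, which is a block module by hypothesis, so $a + A\frak{m}_v$ has finite block support. To see that only the $C$-component contributes, take $a'$ representing the $C'$-component for $C' \neq C$: Fitting's Lemma (Lemma \ref{Fitting}) produces $\frak{n}_{C'} \in \fwords{C'}$ with $\frak{n}_{C'} a' \subseteq A\frak{m}_v$, and the symmetric identity $(\gamma a') = (\gamma) \circ (a')$ (with $(\gamma) \in \mathcal{A}(C, C)$) yields $F_{C,C}(\frak{n}_{C'}) w = 0$ for $w := F_{B,C}(a') v$, while continuity of $F_{C,C}$ at $\Bbbk w$ gives $F_{C,C}(\frak{m}_w) w = 0$ for some $\frak{m}_w \in \fwords{C}$. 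Comaximality $\frak{n}_{C'} + \frak{m}_w = \Gamma$ (distinct maximal two-sided ideals are comaximal, and this extends to products via the standard iterative argument) then forces $w = 0$, so $a.v$ is a well-defined finite sum.

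Next, the module axioms reduce, via functoriality, to the levelwise-finite identity $(ab) = \sum_{C'} (a) \circ (b)$ in $\mathcal{A}(B, D)$; decomposing $a + \frak{l}A$ and $b + A\frak{m}$ into their block components $\alpha_{C'}$ and $\beta_{C''}$, the same comaximality argument handles the off-diagonal cross terms $\alpha_{C'}\beta_{C''}$ with $C' \neq C''$. To identify $F(B)$ with $\mathcal{H}(F)(B)$, I note that for $\gamma \in \Gamma$ the element $\gamma + A\frak{m}$ already lies in $(A/A\frak{m})(B)$ (since $\Gamma$ preserves block spaces and $1 + A\frak{m} \in (A/A\frak{m})(B)$), so Lemma \ref{doublecosetiso} forces $F_{B,C}(\gamma)v = 0$ for $C \neq B$; hence $\Gamma . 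F(B) \subseteq F(B)$ with $\frak{m}_v$ annihilating $v$, giving $F(B) \subseteq \mathcal{H}(F)(B)$. Conversely, if $v = \sum_C v_C \in \mathcal{H}(F)(B)$ is annihilated by some $\frak{m} \in \fwords{B}$, each $v_C$ is killed by both $\frak{m}$ and some $\frak{n}_C \in \fwords{C}$ (from the block property of $F(C)$ just established), and comaximality for $C \neq B$ forces $v_C = 0$. In the strong case, discrete continuity lets Lemma \ref{discretelemma} factor $F_{B,B}$ itself through a single $A/(\frak{n}A + A\frak{m})$, producing a uniform $\frak{m} \in \fwords{B}$ with $\frak{m} \cdot F(B) = 0$.

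The main obstacle I anticipate is the composition-versus-multiplication identity $(a\gamma) = (a) \circ (\gamma)$ and its generalization $(ab) = \sum_{C'} (a) \circ (b)$: composition in $\mathcal{A}$ is built from block decompositions rather than ordinary $A$-multiplication, so translating between them requires simultaneously exploiting the freedom to refine the middle ideal and the two-sided structure of the ideals $\frak{m}, \frak{n}, \frak{l}$ in $\Gamma$.
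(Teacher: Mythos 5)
Your proposal is correct and follows essentially the same strategy as the paper's proof: use continuity of $F_{B,C}$ (via Lemma~\ref{discretelemma}) to extract an ideal $\frak{m}_v \in \fwords{B}$ killing $v$, upgrade this through the bimodule/composition structure of $\mathcal{A}$ to show $F_{B,C}(A\frak{m}_v)v = 0$ uniformly in $C$, invoke the strong block structure of $A/A\frak{m}_v$ (guaranteed by the hypothesis) for finite support, and then verify $b.(a.v) = (ba).v$ levelwise through Definition~\ref{def:comp}. The one local deviation is at the step showing $F_{B,C}(a)v = 0$ for all but finitely many $C$: the paper factors $F_{B,C}(-)(\Gamma v)$ all the way through $\frac{A}{\frak{n}A+A\frak{m}'} \cong (A/A\frak{m}')(C)$ via Lemma~\ref{doublecosetiso}, which makes the dependence on only the $C$-component immediate, whereas you stop at $A/A\frak{m}_v$ and dispatch the off-block components $a'$ with a separate argument pairing the strong-block ideal $\frak{n}_{C'} \in \fwords{C'}$ against a continuity ideal $\frak{m}_w \in \fwords{C}$ and invoking comaximality of $\fwords{C'}$-words with $\fwords{C}$-words; both arguments are sound, and the same applies to your treatment of the cross terms in the module-axiom verification.
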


\begin{proof}
In order for this action to be well defined, we must prove that $F_{B,C}(a)v = 0$ for all but finitely many $C \in \cfsclasses$.

The image of $\Gamma$ in $\mathcal{A}(B, C)$ is zero for $C \neq B$, so we can define $\gamma.v = F_{B, B}(\gamma)v$ for each $\gamma \in \Gamma$ and $v \in F(B)$. We have $F_{B, B}(\gamma_1 \gamma_2) = F_{B, B}(\gamma_1.(\gamma_2)) = F_{B, B}((\gamma_1) \circ (\gamma_2)) = F_{B, B}(\gamma_1) \circ F_{B, B}(\gamma_2)$, making each $F(B)$ a $\Gamma$-module.

Let $v \in F(B)$. By Lemma \ref{discretelemma}, there are $\frak{m}_1, \frak{m}_2 \in \fwords{B}$ so that the following diagram factors:
\[
\begin{tikzcd}
  A \arrow[r] \arrow[dr] & \mathcal{A}(B, B) \arrow[r] \arrow[dr] \arrow[d]& \Hom_\Bbbk(F(B), F(B)) \arrow[d, "- \circ \iota"]\\
  & \frac{A}{\frak{m}_1A + A \frak{m}_2} \arrow[r, dashed] & \Hom_\Bbbk(\Bbbk v, F(B))
\end{tikzcd}
\]
So $\frak{m}_2$ gives the zero action on $\Bbbk v$, making $\mathcal{H}(F)$ a block module.

If $F \in \samod$, then for $B \in \cfsclasses$ there are $\frak{m}_1, \frak{m}_2 \in \fwords{B}$ so that:
\[
\begin{tikzcd}
  A \arrow[r] \arrow[dr] & \mathcal{A}(B, B) \arrow[r] \arrow[d] & \Hom_\Bbbk(F(B), F(B))\\
  & \frac{A}{\frak{m}_1A + A \frak{m}_2} \arrow[ur, dashed]
\end{tikzcd}
\]
So $\frak{m}_2F(B) = 0$, making $\mathcal{H}(F)$ a strong block module.

Now $\Hom_\Bbbk(F(B), F(C))$ is a $(\Gamma, \Gamma)$-bimodule, and $F_{B, C}(\gamma_1.\alpha.\gamma_2) = F_{B, C}((\gamma_1) \circ \alpha \circ (\gamma_2)) = F_{C, C}(\gamma_1) \circ F_{B, C}(\alpha) \circ F_{B, B}(\gamma_2)  = \gamma_1.F_{B, C}(\alpha).\gamma_2$. So $F_{B, C}$ is a map of $(\Gamma, \Gamma)$-bimodules.

Let $v \in F(B)$. Let $\frak{m}' \in \fwords{B}$ so that $\frak{m}' v = 0$. Note that $\Hom_\Bbbk(\Gamma v, F(C))$ is a $(\Gamma, \Gamma)$-bimodule with $\Hom_\Bbbk(\Gamma v, F(C)).\frak{m}' = 0$. Again, let $\frak{m} \in \fwords{B}$ and $\frak{n} \in \fwords{C}$ so that:
\[
\begin{tikzcd}
  A \arrow[r] \arrow[dr] & \mathcal{A}(B, C) \arrow[r] \arrow[dr] \arrow[d]& \Hom_\Bbbk(F(B), F(C)) \arrow[d, "- \circ \iota"]\\
  & \frac{A}{\frak{n}A + A \frak{m}} \arrow[r, dashed] & \Hom_\Bbbk(\Bbbk v, F(C))
\end{tikzcd}
\]
So for any $a \in \frak{n}A$, $F_{B,C}(a)(v) = 0$. In particular, for any $a \in \frak{n}A$ and $\gamma \in \Gamma$, we have $0 = F_{B,C}(a\gamma)(v) = F_{B,C}((a).\gamma)(v) = (F_{B,C}(a).\gamma)(v) = F_{B,C}(a)(\gamma v)$. Hence the following diagram of $(\Gamma, \Gamma)$-bimodules factors:
\begin{equation}\label{eq:diagram}
\begin{tikzcd}
  A \arrow[r] \arrow[dr] & \mathcal{A}(B, C) \arrow[r] \arrow[dr] \arrow[d]& \Hom_\Bbbk(F(B), F(C)) \arrow[d, "- \circ \iota"]\\
  & \frac{A}{\frak{n}A + A \frak{m}'} \arrow[r, dashed] & \Hom_\Bbbk(\Gamma v, F(C))
\end{tikzcd}
\end{equation}
since $F_{B, C}(\frak{n}A) \circ \iota = 0$, but also $F_{B, C}(A\frak{m}') \circ \iota = \Big( F_{B, C}(A) \circ \iota \Big).\frak{m}' = 0$. Of particular importance is that now $\frak{m}'$ depends only on $B$ (and not on $C$).
Note that for any $C \in \cfsclasses$ and $\frak{n} \in \fwords{C}$, we have
\[
\begin{tikzcd}
  A \arrow[d] \arrow[r] & \frac{A}{\frak{n}A + A\frak{m}'}\\
  \frac{A}{A\frak{m}'} \arrow[ur] \arrow[r] & \big(\frac{A}{A\frak{m}'}\big)(C) \arrow[u]
\end{tikzcd}
\]
So given $a \in A$, we have $a \in \frak{n}A + A\frak{m}'$ if $\pi_C(a + A\frak{m}') = 0$. Now in \ref{eq:diagram}, $F_{B, C}(a) \circ \iota = 0$ for all but finitely many $C \in \cfsclasses$, and the proposed action is well defined.

Now let $v \in F(B)$ and $a,b \in A$. Suppose $\frak{m} \in \fwords{B}$ so that $\frak{m} v = 0$. We have just shown that $a.v = \sum_{i \in I} F_{B, C_i}(a)(v)$ where $I = \{i: \pi_{C_i}(a + A \frak{m}) \neq 0 \}$. We can write $a + A\frak{m} = \sum_{i \in I} a_i + A\frak{m}$ for some $a_i + A\frak{m} \in (\frac{A}{A\frak{m}})(C_i)$.
Given $D \in \cfsclasses$, take $\frak{l} \in \fwords{D}$ so that:

\[
\begin{tikzcd}
  \mathcal{A}(B, D) \arrow[r] \arrow[d] & \Hom_\Bbbk(F(B), F(D)) \arrow[d] \\
  \frac{A}{\frak{l} A + A \frak{m}} \arrow[r] & \Hom_\Bbbk(\Bbbk v, F(D))
\end{tikzcd}
\]
Write $b + \frak{l} A = \sum_{j \in J} b_j + \frak{l} A$ for some $b_j + \frak{l} A \in (\frac{A}{\frak{l} A})(C_j)$.

For each $k \in I \cup J$, let $\frak{n}_k \in \fwords{C_k}$ so that $\frac{A}{\frak{l} A + A \frak{n}_k} \cong (\frac{A}{\frak{l} A})(C_k)$ and $\frac{A}{\frak{n}_k A + A \frak{m}} \cong (\frac{A}{A \frak{m}})(C_k)$. Then for distinct $i,j$, we have $b_j \in \frak{l} A + A \frak{n}_i$ and $a_i \in \frak{n}_j A + A \frak{m}$, thus $b_j a_i \in \frak{l} A + A \frak{m}$. So

\begin{align*}
    ba + \frak{l} A + A \frak{m} &= (\sum_{j \in J} b_j)(\sum_{i \in I} a_i) + \frak{l} A + A \frak{m}\\
    &= \sum_{i \in I, j\in J} b_j a_i + \frak{l} A + A \frak{m}\\
    &= \sum_{i \in I} b_ia_i + \frak{l} A + A \frak{m}
\end{align*}
Furthermore, $b - b_k \in \frak{l} A + A \frak{n}_k$ and $a - a_k \in \frak{n}_k A + A \frak{m}$. So $(b) \circ (a) \mapsto b_ka_k + \frak{l} A + A \frak{m}$ under the composition map $\mathcal{A}(C_k, D) \otimes_\Gamma \mathcal{A}(B, C_k) \rightarrow \frac{A}{\frak{l} A + A \frak{m}}$.
Thus $\pi_D(ba.v) = F_{B, D}(ba)(v) = F_{B, D}( \sum_{i \in I} (b) \circ (a) )(v) = \sum_{i \in I} F_{C_i, D}(b) \circ F_{B, C_i}(a) (v) = \sum_{i \in I} \pi_D(b.F_{B, C_i}(a)(v))= \pi_D(b.\sum_{i \in I} F_{B, C_i}(a)(v)) = \pi_D(b.(a.v))$.
So we have an action of $A$ on $\mathcal{H}(F)$, as desired.
\end{proof}

The association $F \mapsto \mathcal{H}(F)$ is a functor which gives the two category equivalences in the following theorem.

\begin{theorem}\label{thm:equivalence}
If $\Gamma$ is a strong Harish-Chandra block subalgebra of $A$, then:
\begin{enumerate}[{\rm (i) }]
    \item $\amod \cong \HC(A; \Gamma, \sim)$
    \item $\samod \cong \SHC(A; \Gamma, \sim)$
\end{enumerate}
\end{theorem}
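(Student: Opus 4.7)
The approach is to construct a functor $\mathcal{F}: \HC(A;\Gamma,\sim) \to \amod$ that is quasi-inverse to $\mathcal{H}$ from Proposition \ref{functorobjects}, and to verify that it restricts to $\SHC(A;\Gamma,\sim) \to \samod$ so as to yield (i) and (ii) simultaneously.

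For $V \in \HC(A;\Gamma,\sim)$, I set $F_V(B) := V(B)$. Given $\alpha \in \mathcal{A}(B,C)$ and $v \in V(B)$, choose $\frak{m} \in \fwords{B}$ with $\frak{m}v = 0$; since $\Gamma$ is strong, $(A/A\frak{m})(C)$ is left-annihilated by some $\frak{n} \in \fwords{C}$, and Lemma \ref{doublecosetiso} gives $\frac{A}{\frak{n}A + A\frak{m}} \cong (A/A\frak{m})(C)$. Lifting $\alpha_{\frak{m},\frak{n}}$ to $a_0 \in A$, define $F_V(\alpha)(v) := \pi_C(a_0 v)$. For well-definedness, note that the left $\Gamma$-linear map $\bar\phi_v: A/A\frak{m} \to V$, $a + A\frak{m} \mapsto av$, sends each block summand $(A/A\frak{m})(C')$ into $V(C')$; hence $\pi_C \circ \bar\phi_v$ vanishes on $(A/A\frak{m})(C')$ for $C' \neq C$ and factors through $(A/A\frak{m})(C)$, whose image $\pi_C(Av)$ is therefore annihilated by $\frak{n}$. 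Any other lift $a_0'$ satisfies $a_0 - a_0' \in \frak{n}A + A\frak{m}$, giving $\pi_C((a_0 - a_0')v) \in \frak{n}\pi_C(Av) = 0$; independence from the choice of $(\frak{m},\frak{n})$ follows from compatibility in the inverse system.

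Continuity is then routine: for any finite-dimensional $U \subseteq V(B)$ a common annihilator $\frak{m}$ exists, and the restriction $\mathcal{A}(B,C) \to \Hom_\Bbbk(U, V(C))$ factors through the discrete quotient $\frac{A}{\frak{n}A + A\frak{m}}$, so $F_V \in \amod$. If $V$ is strong, $\frak{m}$ can be chosen uniformly on $V(B)$, upgrading the factorization to continuity in the discrete topology and giving $F_V \in \samod$. Functoriality $F_V(\beta \circ \alpha) = F_V(\beta) \circ F_V(\alpha)$ follows by the same block-wise computation that closes the proof of Proposition \ref{functorobjects}: decompose the lifts along the blocks of $A/A\frak{m}$ and $A/\frak{l}A$; the cross-terms $b_j a_i$ with $i \neq j$ lie in $\frak{l}A + A\frak{m}$ by Definition \ref{def:comp}, so composition in $\mathcal{A}$ matches iteration of the $A$-action on $V$. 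On morphisms, set $\mathcal{F}(f)_B := f|_{V(B)}$; that this is a natural transformation of $\mathcal{A}$-modules is exactly the content of Proposition \ref{naturaltransformation}.

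Finally, $\mathcal{H}(\mathcal{F}(V)) = V$ with matching $A$-action (take $a_0 := a$ itself as a lift), and $\mathcal{F}(\mathcal{H}(F)) \cong F$ because, for any $\alpha$ and $v \in F(B)$, continuity of $F$ restricted to $\Bbbk v$ forces $F_{B,C}(\alpha)(v) = F_{B,C}(a_0)(v)$ for any suitable lift $a_0$. The main obstacle is the interlocking of well-definedness, continuity, and functoriality of $F_V$, all of which hinge on the strong hypothesis: it is precisely the left-annihilator $\frak{n}$ of $(A/A\frak{m})(C)$ that pins down a canonical discrete quotient through which $\alpha$ can be represented on a single vector. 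Without it, the block-factoring step still confines $\pi_C \bar\phi_v$ to $(A/A\frak{m})(C)$, but the inverse limit structure fails to descend cleanly to a map $V(B) \to V(C)$ --- which is why both (i) and (ii) require $\Gamma$ to be a strong Harish-Chandra block subalgebra.
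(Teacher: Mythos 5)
Your proposal is correct and follows essentially the same approach as the paper: you construct a quasi-inverse $V \mapsto F_V$ with $F_V(B) = V(B)$ and $F_V(\alpha)(v) = \pi_C(a_0 v)$ for a suitable lift $a_0$, using the strong hypothesis to obtain a common annihilator $\frak{n}$ and Lemma \ref{doublecosetiso} to pin down the discrete quotient, while the paper phrases the same content as verifying that $\mathcal{H}$ is fully faithful and essentially surjective. The well-definedness, continuity (limit topology in general, discrete when $V$ is strong), and functoriality arguments all match the paper's, with only cosmetic differences in presentation.
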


\begin{proof}
We establish a functor $\mathcal{H}$ from $\amod$ to $\HC(A; \Gamma, \sim)$ that will give the desired equivalences.
If $F \in \amod$, then $\mathcal{H}(F) = \bigoplus_{B \in \cfsclasses} F(B)$ is a Harish-Chandra block module by Proposition \ref{functorobjects}.

Given a natural transformation $\theta: F \rightarrow G$, we have:
\[
\begin{tikzcd}
F(B) \arrow[r, "\theta_B"] \arrow[d, "F_{B, C}(\alpha)" left]& G(B) \arrow[d, "G_{B, C}(\alpha)"]\\
F(C) \arrow[r, "\theta_C"] & G(C)
\end{tikzcd}
\]
for each $\alpha \in \mathcal{A}(B, C)$. In particular, this diagram commutes for $\alpha = (a)$ when $a \in A$. Since $\pi_C(a.v) = \pi_C(\sum_D F_{B, D}(a)v) = F_{B, C}(a)v$ for $v \in F(B)$, this diagram becomes:
\[
\begin{tikzcd}
F(B) \arrow[r, "\theta_B"] \arrow[d, "\pi_C(a.-)" left]& G(B) \arrow[d, "\pi_C(a.-)"]\\
F(C) \arrow[r, "\theta_C"] & G(C)
\end{tikzcd}
\]
so that $\mathcal{H}(\theta) := \bigoplus \theta_B$ is a map of Harish-Chandra block modules, by Proposition \ref{naturaltransformation}.

Furthermore, $\mathcal{H}(id) = \bigoplus id = id$ and $\mathcal{H}(\theta \circ \sigma) = \bigoplus (\theta_B \circ \sigma_B) = \bigoplus \theta_B \circ \bigoplus \sigma_B = \mathcal{H}(\theta) \circ \mathcal{H}(\sigma)$, so that $\mathcal{H}$ is a functor.
Clearly, $\theta \mapsto \bigoplus \theta_B$ is injective.

Again, by Proposition \ref{naturaltransformation}, every map between the Harish-Chandra block modules $\mathcal{H}(F) = \bigoplus F(B)$ and $\mathcal{H}(G) = \bigoplus G(B)$ is a direct sum of linear maps $\theta_B$ which for every $a \in A$ satisfies:
\[
\begin{tikzcd}
F(B) \arrow[r, "\theta_B"] \arrow[d, "F_{B, C}(a) = \pi_C(a.-)" left]& G(B) \arrow[d, "\pi_C(a.-) = G_{B, C}(a)"]\\
F(C) \arrow[r, "\theta_C"] & G(C)
\end{tikzcd}
\]
If $v \in F(B)$, then there exist $\frak{m} \in \fwords{B}$ and $\frak{n} \in \fwords{C}$ inducing the following commutative diagram:
\[
\begin{tikzcd}
\Hom_\Bbbk(\Bbbk v, F(C)) & \arrow[l] \mathcal{A}(B, C) \arrow[r] \arrow[d] & \Hom_\Bbbk(\theta_B(\Bbbk v), G(C))\\
& \frac{A}{\frak{n}A + A \frak{m}} \arrow[ul, dashed] \arrow[ur, dashed]
\end{tikzcd}
\]
Now if $\alpha \in \mathcal{A}(B, C)$ and $\alpha_{\frak{m},\frak{n}} = a + \frak{n} A + A \frak{m}$, then $F_{B, C}(\alpha)(v) = F_{B, C}(a)(v) = \pi_C(a.v)$ and $G_{B, C}(\alpha)(\theta_B(v)) = G_{B, C}(a)(\theta_B(v)) = \pi_C(a.\theta_B(v)) = \theta_C(\pi_C(a.v))$. So we have the commutative diagram:
\[
\begin{tikzcd}
F(B) \arrow[r, "\theta_B"] \arrow[d, "F_{B, C}(\alpha)" left]& G(B) \arrow[d, "G_{B, C}(\alpha)"]\\
F(C) \arrow[r, "\theta_C"] & G(C)
\end{tikzcd}
\]
for each $\alpha \in \mathcal{A}(\frak{m}, \frak{n})$, hence $\theta = \{\theta_B\}$ is a natural transformation. So the assignment $\theta \mapsto \bigoplus \theta_B$ is surjective.

So $\mathcal{H}$ is full and faithful. We now show it is essentially surjective, finishing the equivalence.
To each $V \in \HC(A; \Gamma, \sim)$, we construct a functor $\mathcal{V} \in \amod$. Define $\mathcal{V}(B) = V(B)$.
Let $B, C \in \cfsclasses$ and $W \leq V(B)$ finite-dimensional. Let $\frak{m} \in \fwords{B}$ and $\frak{n} \in \fwords{C}$ so that $\frak{m} W = 0$ and $\frak{n} \pi_C( A.W ) = 0$ ($\frak{n}$ exists since $\Gamma$ is a strong Harish-Chandra block subalgebra of $A$ and $W$ is finite-dimensional). So we have a linear map $\frac{A}{\frak{n} A + A \frak{m}} \rightarrow \Hom_\Bbbk(W, V(C))$ given by $a + \frak{n} A + A \frak{m} \mapsto \pi_C(a.-)$ since $\pi_C((\frak{n}A + A \frak{m}).-) = \frak{n}\pi_C(A.-) + \pi_C(A\frak{m}.-) = 0$. So we consider the composition
\[
\begin{tikzcd}
\mathcal{A}(B, C) \arrow[r] \arrow[d] & \Hom_\Bbbk(W, V(C))\\
\frac{A}{\frak{n} A + A \frak{m}} \arrow[ur]
\end{tikzcd}
\]
which we will show induces a continuous linear map $\mathcal{A}(B, C) \rightarrow \Hom_\Bbbk(V(B), V(C))$.

If $\frak{m}' \in \fwords{B}$ and $\frak{n}' \in \fwords{C}$ so that $\frak{m}' \subseteq \frak{m}$ and $\frak{n}' \subseteq \frak{n}$, then:

\[
\begin{tikzcd}
& \frac{A}{\frak{n}' A + A \frak{m}'} \arrow[dd] \arrow[dr]\\
  \mathcal{A}(B, C) \arrow[ur] \arrow[dr] & & \Hom_\Bbbk(W, V(C))\\
  & \frac{A}{\frak{n} A + A \frak{m}} \arrow[ur]
\end{tikzcd}
\]
so that $\mathcal{A}(B, C) \rightarrow \Hom_\Bbbk(W, V(C))$ is well defined.
If $W' \supseteq W$ is a finite-dimensional subspace of $V(\frak{m})$, and $\frak{m}' \in \fwords{B}$ and $\frak{n}' \in \fwords{C}$ with $\frak{m}' \subseteq \frak{m}$ and $\frak{n}' \subseteq \frak{n}$ so that $\frak{m}'W' = 0$ and $\frak{n}'\pi_\frak{n}(A.W') = 0$, then:

\[
\begin{tikzcd}
& \frac{A}{\frak{n}' A + A \frak{m}'} \arrow[dd] \arrow[r] & \Hom_\Bbbk(W', V(C)) \arrow[dd]\\
  \mathcal{A}(B, C) \arrow[ur] \arrow[dr]\\
  & \frac{A}{\frak{n} A + A \frak{m}} \arrow[r] & \Hom_\Bbbk(W, V(C))
\end{tikzcd}
\]
This induces a continuous linear map $\mathcal{V}_{B, C}$ into the limit $\Hom_\Bbbk(V(B), V(C))$:

\[
\begin{tikzcd}
\mathcal{A}(B, C) \arrow[r, "\mathcal{V}_{B,C}"] \arrow[d] \arrow[dr] & \Hom_\Bbbk(V(B), V(C)) \arrow[d] \\
\frac{A}{\frak{n} A + A \frak{m}} \arrow[r] & \Hom_\Bbbk(W, V(C))
\end{tikzcd}
\]
Furthermore, if $V$ is a strong Harish-Chandra block module, then we may choose the same $\frak{m} \in \fwords{B}$ and $\frak{n} \in \fwords{C}$ for every $W \leq V(B)$ finite-dimensional, giving the continuity diagram:

\[
\begin{tikzcd}
\mathcal{A}(B, C) \arrow[r] \arrow[d] & \Hom_\Bbbk(V(B), V(C))\\
\frac{A}{\frak{n} A + A \frak{m}} \arrow[ur]
\end{tikzcd}
\]

So if we can show the maps $\mathcal{V}_{B, C}$ make $\mathcal{V}$ a functor, then we will have $\mathcal{V} \in \amod$ (or in $\samod$, if $V$ is a strong Harish-Chandra block module.)
Firstly, $\mathcal{V}_{B, B}(1) = \pi_B(1.-) = id_{V(B)}$.

Let $\beta \in \mathcal{A}(C, D)$, $\alpha \in \mathcal{A}(B, C)$, and $v \in V(B)$. Take $\frak{m} \in \fwords{B}$, $\frak{n} \in \fwords{C}$, and $\frak{l} \in \fwords{D}$ so that we have the continuity diagrams:

\[
\begin{tikzcd}
\mathcal{A}(C, D) \arrow[r] \arrow[d] & \Hom_\Bbbk(\mathcal{V}_{B, C}(\alpha)(\Bbbk v), V(D)) & \mathcal{A}(B, C) \arrow[r] \arrow[d] & \Hom_\Bbbk(\Bbbk v, V(C))\\
\frac{A}{\frak{l}A + A \frak{n}} \arrow[ur] & & \frac{A}{\frak{n}A + A \frak{m}} \arrow[ur]\\
& \mathcal{A}(B, D) \arrow[r] \arrow[d] & \Hom_\Bbbk(\Bbbk v, V(D))\\
& \frac{A}{\frak{l} A + A \frak{m}} \arrow[ur]
\end{tikzcd}
\]
In fact, we have these for any $\frak{n}' \in \fwords{C}$ with $\frak{n}' \subseteq \frak{n}$, so assume $\frak{n}$ is also large enough that we have:
\begin{align*}
    \alpha_{\frak{m},\frak{n}} = a_0 + \frak{n} A + A \frak{m} && a_0 + A\frak{m} \in (\frac{A}{A \frak{m}})(B)\\
    \beta_{\frak{n},\frak{l}} = b_0 + \frak{l} A + A \frak{n} && b_0 + \frak{l} A \in (\frac{A}{\frak{l} A})(B) \\
    (\beta \circ \alpha)_{\frak{m},\frak{l}} = b_0a_0 + \frak{l} A + A \frak{m}
\end{align*}
Then $\frak{n} a_0 v \subseteq A \frak{m} v = 0$, so $\mathcal{V}_{B,D}(\beta \circ \alpha)(v) = \pi_D(b_0a_0.v) = \pi_D(b_0.\pi_
C(a_0.v)) = \mathcal{V}_{C,D}(\beta)( \mathcal{V}_{B,C}(\alpha)(v))$.

So $\mathcal{V} \in \amod$. Now $\mathcal{H}(\mathcal{V}) = \bigoplus \mathcal{V}(B) = \bigoplus V(B)$ and for $v \in V(B)$, $a.v = \sum \pi_C(a.v) = \sum \mathcal{V}_{B,C}(a)v$, so $\mathcal{H}(\mathcal{V}) \cong V$.
Hence $\mathcal{H}$ is fully faithful and essentially surjective, establishing the desired equivalence.
\end{proof}

\subsection{Remarks on Categories in Drozd-Futorny-Ovsienko}
The definition of the categories $\mathcal{A}$, $\amod$, and $\samod$ were inspired by the definitions in \cite{drozd1994harish}. In fact, the definitions of $\mathcal{A}$ and $\samod$ coincide with the definitions in \cite{drozd1994harish} when $\sim$ is the equality relation. There are two major differences to note.

Firstly, we require that $\Gamma$ is a \emph{strong} Harish-Chandra block subalgebra of $A$. It is unclear whether the composition defined in Definition \ref{def:comp} and in \cite{drozd1994harish} is well-defined under weaker assumptions.

Secondly, \cite[Thm.~17]{drozd1994harish} claims that the category of Harish-Chandra modules, denoted in this paper by $\HC(A; \Gamma, =)$, is equivalent to the category of discrete $\mathcal{A}$-modules. However, we have amended this in Theorem \ref{thm:equivalence} as the category of discrete $\mathcal{A}$-modules is equivalent to the category $\SHC(A; \Gamma, =)$ of \emph{strong} Harish-Chandra modules. The category of Harish-Chandra modules is instead equivalent to the category $\amod$ of \emph{profinite} $\mathcal{A}$-modules which we have introduced for this purpose.

Many of the generalizations of classical Harish-Chandra modules discussed in the introduction are assumed to have finite-dimensional weight spaces. Since any Harish-Chandra module in the sense of \cite{drozd1994harish} having finite-dimensional generalized weight spaces is a strong Harish-Chandra block module with respect to the equality relation, the ambiguity in \cite{drozd1994harish} is created by the possibility of infinite-dimensional generalized weight spaces.



\section{Irreducible Harish-Chandra Block Modules}\label{sec:irr}
Given $B \in \cfsclasses$, let $\hat{\Gamma}_B = \lim_{\frak{m} \in \fwords{B}} \Gamma/\frak{m}$. For $\frak{m} \in \fwords{B}$, let $\pi_\frak{m}: \hat{\Gamma}_B \rightarrow \Gamma/\frak{m}$ denote the canonical projection. We will leverage the equivalence of the previous section in order to prove the following theorem.

\begin{theorem}\label{thm:irreducibles}
    Let $\Gamma$ be a strong Harish-Chandra block subalgebra of $A$ with respect to $\sim$, and let $B \in \cfsclasses$. Suppose $B$ is finite, $\Gamma$ is noetherian, and $\mathcal{A}(B,B)$ is finitely generated as a left and right $\hat{\Gamma}_B$-module. Then:
    \begin{enumerate}[{\rm (i) }]
        \item There are finitely many isoclasses of simple Harish-Chandra block modules $V$ with $B \in \Supp(V)$.
        \item If $V$ is a simple Harish-Chandra block module $V$, then $V(B)$ is finite-dimensional.
    \end{enumerate}
\end{theorem}
We will begin by proving some lemmas about the Jacobson radical of $\hat{\Gamma}_B$.
\begin{lemma}\label{lem:completerad}
    Given $B \in \cfsclasses$, the Jacobson radical $\mathrm{Rad}(\hat{\Gamma}_B)$ is equal to $\bigcap_{\frak{m} \in B} \ker(\pi_\frak{m})$.
\end{lemma}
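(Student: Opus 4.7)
The plan is to prove the two inclusions separately. For $\mathrm{Rad}(\hat{\Gamma}_B) \subseteq \bigcap_{\frak{m} \in B} \ker(\pi_{\frak{m}})$, I first note that for each $\frak{m} \in B$ the projection $\pi_{\frak{m}}$ is surjective: since $\frak{m}$ itself lies in $\fwords{B}$, the canonical surjection $\Gamma \to \Gamma/\frak{m}$ factors through $\hat{\Gamma}_B$. Because $\Gamma/\frak{m}$ is a simple finite-dimensional $\Bbbk$-algebra, its own Jacobson radical is zero, so $\ker(\pi_{\frak{m}})$ is the intersection of the maximal left ideals of $\hat{\Gamma}_B$ containing it, and in particular $\mathrm{Rad}(\hat{\Gamma}_B) \subseteq \ker(\pi_{\frak{m}})$.

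For the reverse inclusion I will use the characterization that $x \in \mathrm{Rad}(\hat{\Gamma}_B)$ if and only if $1 - yxz$ is a unit for all $y,z \in \hat{\Gamma}_B$. Fix $x \in \bigcap_{\frak{m} \in B} \ker(\pi_{\frak{m}})$; since each $\ker(\pi_{\frak{m}})$ is a two-sided ideal, the intersection contains every $yxz$, so it suffices to show $1 - x$ is a unit. The key ingredient is the nilpotence observation
\[
\Bigl( \bigcap_{i=1}^{k} \frak{m}_i \Bigr)^{k} \subseteq \frak{m}_1 \cdots \frak{m}_k \qquad \text{for any } \frak{m}_1, \ldots, \frak{m}_k \in B,
\]
which holds because in any $k$-fold product of elements of the left-hand side the $j$-th factor may be regarded as lying in $\frak{m}_j$. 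Setting $\frak{m}' = \frak{m}_1 \cdots \frak{m}_k$, the image $\pi_{\frak{m}'}(x)$ lies in $\bigcap_i \frak{m}_i / \frak{m}'$, so it is nilpotent, and $1 - \pi_{\frak{m}'}(x)$ is a unit in $\Gamma/\frak{m}'$ with inverse the finite geometric series.

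Finally I assemble these local inverses into an element of $\hat{\Gamma}_B$. The transition maps $\Gamma/\frak{m}'' \to \Gamma/\frak{m}'$ in the inverse system are unital ring homomorphisms, so they send the inverse of $1 - \pi_{\frak{m}''}(x)$ to an inverse of $1 - \pi_{\frak{m}'}(x)$; by uniqueness of inverses in each quotient, this family is compatible and yields an element $z \in \hat{\Gamma}_B$ with $(1-x)z = z(1-x) = 1$. I expect no serious obstacle: the only slightly delicate step is the nilpotence identity above, which is essentially a combinatorial observation about products of ideals, and everything else is formal manipulation of the inverse limit and the standard characterization of the Jacobson radical.
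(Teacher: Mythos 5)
Your proof is correct, and for the most part it runs in parallel with the paper's argument: the easy inclusion is handled identically (surjectivity of $\pi_{\frak{m}}$ plus $\mathrm{Rad}(\Gamma/\frak{m}) = 0$), and your nilpotence observation $\bigl(\bigcap_{i}\frak{m}_i\bigr)^k \subseteq \frak{m}_1\cdots\frak{m}_k$ is precisely the content of the paper's step $(1-y_{\frak{m}_1\cdots\frak{m}_k})^k \in \frak{m}_1\cdots\frak{m}_k$, and your geometric series is the paper's polynomial $p_k$ in disguise ($p_k(1-t) = 1 + t + \cdots + t^{k-1}$). Where you genuinely diverge is in checking that the family of local inverses is compatible, i.e.\ actually defines an element of $\hat{\Gamma}_B$: the paper verifies this by hand via the identity $p_k(t) - p_{k-1}(t) = (1-t)^{k-1}$ together with a diagram chase, whereas you observe that the transition maps $\Gamma/\frak{m}'' \to \Gamma/\frak{m}'$ are unital ring homomorphisms, hence carry the inverse of $1-\pi_{\frak{m}''}(x)$ to an inverse of $1-\pi_{\frak{m}'}(x)$, and uniqueness of two-sided inverses forces agreement. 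This is cleaner and, as a bonus, handles \emph{all} inclusions $\frak{m}'' \subseteq \frak{m}'$ in $\fwords{B}$ uniformly, whereas the paper's polynomial computation only explicitly treats the case of dropping one factor from the product. Both arguments reach the same conclusion, but your route avoids carrying around explicit representatives and polynomial bookkeeping.

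One minor stylistic point: in the first paragraph, rather than saying $\ker(\pi_{\frak{m}})$ is the intersection of maximal left ideals containing it, it is more direct to note that surjectivity of $\pi_{\frak{m}}$ gives $\pi_{\frak{m}}(\mathrm{Rad}(\hat{\Gamma}_B)) \subseteq \mathrm{Rad}(\Gamma/\frak{m}) = 0$, which is what the paper does; the conclusion is the same.
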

\begin{proof}
    We begin by showing that $\bigcap_{\frak{m} \in B}\ker(\pi_\frak{m}) \subseteq \mathrm{Rad}(\hat{\Gamma}_B)$. Let $x \in \bigcap_{\frak{m} \in B}\ker(\pi_\frak{m})$. Say $1-x = (y_{\frak{m}} + \frak{m})_{\frak{m} \in \fwords{B}}$. Note that for any $0 < i \leq k$ and $\frak{m}_1, \ldots, \frak{m}_k \in B$ we may chase $x$ across the commutative diagram:
    \[\begin{tikzcd}
        & \hat{\Gamma}_B \arrow[dl] \arrow[dr] & \\
        \Gamma/\frak{m}_1 \cdots \frak{m}_k \arrow[rr] & & \Gamma/\frak{m}_i
    \end{tikzcd}\]
    to observe that $1 - y_{\frak{m}_1 \cdots \frak{m}_k} \in \frak{m}_i$ and therefore \begin{equation}\label{eq:quasiproduct}
        (1 - y_{\frak{m}_1 \cdots \frak{m}_k})^k \in \frak{m}_1 \cdots \frak{m}_k.
    \end{equation}
    For $k \geq 0$, let $p_k(t)$ be the polynomial: \begin{equation}
        p_k(t) = \frac{1 - (1-t)^k}{t}.
    \end{equation} We claim $$(p_k(y_{\frak{m}_1 \cdots \frak{m}_k}) + \frak{m}_1 \cdots \frak{m}_k)_{\substack{k \geq 0\\ \frak{m}_1, \ldots, \frak{m}_k \in B}}$$ is in $\hat{\Gamma}_B$ and is the inverse of $1-x$. Consequently every member of the ideal $\bigcap_{\frak{m} \in B}\ker(\pi_\frak{m})$ is quasiregular, and therefore in $\mathrm{Rad}(\hat{\Gamma}_B)$.
    
    Let $k \geq 1$ and $\frak{m}_1,\ldots,\frak{m}_k \in B$. Take $i \in [k]$ and $\frak{n} = \frak{m}_1 \cdots \frak{m}_{i-1} \frak{m}_{i+1} \cdots \frak{m}_k$. Then
    \begin{align}
        p_k(y_{\frak{m}_1 \cdots \frak{m}_k}) - p_{k-1}(y_\frak{n}) &= p_k(y_{\frak{m}_1 \cdots \frak{m}_k}) - p_k(y_\frak{n}) + p_k(y_\frak{n}) - p_{k-1}(y_\frak{n}) \notag \\
        &= p_k(y_{\frak{m}_1 \cdots \frak{m}_k}) - p_k(y_\frak{n}) + (1-y_\frak{n})^{k-1} \label{eq:polyidentity}
    \end{align}
    where the equality of \ref{eq:polyidentity} follows from the polynomial identity: $$p_k(t) - p_{k-1}(t) = (1-t)^{k-1}.$$ By \ref{eq:quasiproduct}, we have $(1-y_\frak{n})^{k-1} \in \frak{n}$. By chasing $p_k(1-x)$ across the commutative diagram:
    \[\begin{tikzcd}
        & \hat{\Gamma}_B \arrow[dl] \arrow[dr] & \\
        \Gamma/\frak{m}_1 \cdots \frak{m}_k \arrow[rr] & & \Gamma/\frak{n}
    \end{tikzcd}\] we have $p_k(y_{\frak{m}_1 \cdots \frak{m}_k}) - p_k(y_\frak{n}) \in \frak{n}$.
    Thus we have $$(p_k(y_{\frak{m}_1 \cdots \frak{m}_k}) + \frak{m}_1 \cdots \frak{m}_k)_{\substack{k \geq 0\\ \frak{m}_1, \ldots, \frak{m}_k \in B}}\in\hat{\Gamma}_B.$$ Finally, for any $k \geq 0$, we have the polynomial identity: $$1-tp_k(t) = 1 - p_k(t)t = (1-t)^{k}$$
    so by \ref{eq:quasiproduct} we have:
    \begin{align*}
        1-y_{\frak{m}_1 \cdots \frak{m}_k}p_k(y_{\frak{m}_1 \cdots \frak{m}_k}) = 1-p_k(y_{\frak{m}_1 \cdots \frak{m}_k})y_{\frak{m}_1 \cdots \frak{m}_k} &= (1 - y_{\frak{m}_1 \cdots \frak{m}_k})^k\\
        &\in \frak{m}_1 \cdots \frak{m}_k
    \end{align*}
    hence $1-x$ has inverse $$(p_k(y_{\frak{m}_1 \cdots \frak{m}_k}) + \frak{m}_1 \cdots \frak{m}_k)_{\substack{k \geq 0\\ \frak{m}_1, \ldots, \frak{m}_k \in B}}$$ as desired.

    Conversely, for any $\frak{m} \in B$ we have that $\pi_\frak{m}(\mathrm{Rad}(\hat{\Gamma}_B)) \subseteq \mathrm{Rad}(\Gamma/\frak{m})$ since $\pi_\frak{m}$ is surjective. Now $\mathrm{Rad}(\Gamma/\frak{m}) = 0$, since $\Gamma/\frak{m}$ is a finite-dimensional simple algebra. So for any $\frak{m} \in B$, we have $\mathrm{Rad}(\hat{\Gamma}_B) \subseteq \ker(\pi_\frak{m})$, and therefore $\mathrm{Rad}(\hat{\Gamma}_B) \subseteq \bigcap_{\frak{m} \in B}\ker(\pi_\frak{m})$, completing the proof.
    \end{proof}

    \begin{corollary}\label{cor:finiterad}
        If $B \in \cfsclasses$ is finite, then $\hat{\Gamma}_B/\mathrm{Rad}(\hat{\Gamma}_B)$ is finite-dimensional.
    \end{corollary}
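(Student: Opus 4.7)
The plan is to deduce this essentially immediately from Lemma \ref{lem:completerad}. By that lemma, $\mathrm{Rad}(\hat{\Gamma}_B) = \bigcap_{\frak{m} \in B} \ker(\pi_\frak{m})$, so it is natural to compare $\hat{\Gamma}_B$ against the product $\prod_{\frak{m} \in B} \Gamma/\frak{m}$ via the map whose components are the projections $\pi_\frak{m}$ (which exist since each $\frak{m} \in B$ lies in $\fwords{B}$, taking $k = 1$).

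First I would define the $\Bbbk$-algebra homomorphism $\Phi: \hat{\Gamma}_B \to \prod_{\frak{m} \in B} \Gamma/\frak{m}$ by $\Phi(x) = (\pi_\frak{m}(x))_{\frak{m} \in B}$. Then by construction $\ker(\Phi) = \bigcap_{\frak{m} \in B} \ker(\pi_\frak{m})$, which equals $\mathrm{Rad}(\hat{\Gamma}_B)$ by Lemma \ref{lem:completerad}. Hence $\Phi$ descends to an injection $\hat{\Gamma}_B/\mathrm{Rad}(\hat{\Gamma}_B) \hookrightarrow \prod_{\frak{m} \in B} \Gamma/\frak{m}$.

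Finally, each $\Gamma/\frak{m}$ is finite-dimensional because $\frak{m} \in \cfs(\Gamma)$, and by hypothesis $B$ is finite, so $\prod_{\frak{m} \in B} \Gamma/\frak{m}$ is a finite-dimensional $\Bbbk$-vector space. Since any subspace of a finite-dimensional space is finite-dimensional, the quotient $\hat{\Gamma}_B/\mathrm{Rad}(\hat{\Gamma}_B)$ is finite-dimensional. There is no substantial obstacle here; all of the nontrivial content was already established in Lemma \ref{lem:completerad}, and this corollary is just the observation that finite intersection of cofinite ideals remains cofinite.
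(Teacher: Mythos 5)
Your proof is correct and follows essentially the same route as the paper's, with one small simplification: the paper observes that the $\ker(\pi_\frak{m})$ are distinct maximal ideals and invokes a Chinese-Remainder-type isomorphism $\hat{\Gamma}_B/\bigcap_{\frak{m}\in B}\ker(\pi_\frak{m}) \cong \prod_{\frak{m}\in B}\Gamma/\frak{m}$, whereas you only build the induced injection $\hat{\Gamma}_B/\mathrm{Rad}(\hat{\Gamma}_B)\hookrightarrow\prod_{\frak{m}\in B}\Gamma/\frak{m}$, which already suffices to bound the dimension. Your version is marginally more economical since it sidesteps verifying comaximality; the paper's version gives slightly more (the explicit semisimple structure of the quotient), but that extra information is not needed here. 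Either way the heavy lifting is done by Lemma~\ref{lem:completerad}, exactly as you say.
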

    \begin{proof}
    Note that $\{\ker(\pi_\frak{m})\}_{\frak{m} \in B}$ are distinct maximal two-sided ideals of $\hat{\Gamma}_B$, hence:
    \begin{align*}
        \hat{\Gamma}_B/\mathrm{Rad}(\hat{\Gamma}_B) &= \hat{\Gamma}_B/\bigcap_{\frak{m} \in B}\ker(\pi_\frak{m})\\
        &\cong \prod_{\frak{m} \in B} \hat{\Gamma}_B/\ker(\pi_\frak{m})\\
        &\cong \prod_{\frak{m} \in B} \Gamma/\frak{m}.
    \end{align*}
    Consequently, $\hat{\Gamma}_B/\mathrm{Rad}(\hat{\Gamma}_B)$ is finite-dimensional.
\end{proof}

\begin{lemma}\label{lem:radfinitepower}
    Let $B \in \cfsclasses$. If $B$ is finite, and $\Gamma$ is noetherian, then $\hat{\Gamma}_B/(\mathrm{Rad}(\hat{\Gamma}_B))^n$ is finite dimensional for every $n$.
\end{lemma}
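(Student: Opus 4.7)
The plan is to argue by induction on $n$. The base case $n=1$ is Corollary \ref{cor:finiterad}. For the inductive step, writing $J = \mathrm{Rad}(\hat{\Gamma}_B)$ and using the short exact sequence
\begin{equation*}
0 \to J^n/J^{n+1} \to \hat{\Gamma}_B/J^{n+1} \to \hat{\Gamma}_B/J^n \to 0,
\end{equation*}
the problem reduces to showing each $J^n/J^{n+1}$ is finite-dimensional. Since $J$ annihilates this quotient on both sides, it is naturally a bimodule over the finite-dimensional algebra $\hat{\Gamma}_B/J$ (Corollary \ref{cor:finiterad}), so it suffices to prove $J^n/J^{n+1}$ is finitely generated as a bimodule. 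This in turn would follow from showing that $J$ is finitely generated as a two-sided ideal of $\hat{\Gamma}_B$.

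To set things up, I would first replace $\fwords{B}$ by a more tractable cofinal system. Let $B = \{\frak{m}_1, \ldots, \frak{m}_k\}$ and $\frak{p} = \frak{m}_1 \cdots \frak{m}_k$. Since $\frak{p} \subseteq \frak{m}_{i_j}$ for each $j$, we have $\frak{p}^\ell \subseteq \frak{m}_{i_1} \cdots \frak{m}_{i_\ell}$ for any product in $\fwords{B}$, so $\{\frak{p}^N\}_{N \geq 1}$ is cofinal and $\hat{\Gamma}_B = \varprojlim_N \Gamma/\frak{p}^N$. Noetherianity of $\Gamma$ combined with finite-dimensionality of each $\Gamma/\frak{m}_i$ gives, via the filtration $\Gamma \supseteq \frak{p} \supseteq \frak{p}^2 \supseteq \cdots$ whose successive quotients are finitely generated right $\Gamma/\frak{p}$-modules, that each $\Gamma/\frak{p}^N$ is finite-dimensional.

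Set $\frak{J} = \bigcap_i \frak{m}_i$; by noetherianity, $\frak{J}$ is finitely generated as a two-sided $\Gamma$-ideal by some $g_1, \ldots, g_s$. Letting $\iota : \Gamma \to \hat{\Gamma}_B$ be the natural map, each $\iota(g_i)$ lies in $J$. I would argue that $\iota(g_1), \ldots, \iota(g_s)$ generate $J$ as a two-sided $\hat{\Gamma}_B$-ideal. Granting this, $J^n$ is generated two-sidedly by the products $\iota(g_{i_1}) \cdots \iota(g_{i_n})$, so $J^n/J^{n+1}$ is a finitely generated $\hat{\Gamma}_B/J$-bimodule over a finite-dimensional algebra, hence finite-dimensional.

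The main obstacle is establishing the generation claim for $J$. Given $x \in J$, compatibility in the inverse limit forces each $x_{\frak{p}^N} \in \Gamma/\frak{p}^N$ to admit a representative in $\frak{J}$, which can locally be expanded in terms of $g_1, \ldots, g_s$; the difficulty is coordinating these expansions across all $N$ so as to define coherent elements of $\hat{\Gamma}_B = \varprojlim_N \Gamma/\frak{p}^N$. This is an Artin-Rees-style coherence problem: one must exploit the noetherian structure on the module of relations $\{(\gamma_1, \ldots, \gamma_s) \in \Gamma^s : \sum_i \gamma_i g_i = 0\}$ to secure that level-$(N+1)$ representatives can be modified by relations to agree modulo $\frak{p}^N$ with the level-$N$ choices. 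This step is the technical heart of the argument.
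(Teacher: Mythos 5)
Your overall strategy is plausible but has a genuine gap that you yourself flag: you reduce the lemma to showing that $J = \mathrm{Rad}(\hat{\Gamma}_B)$ is finitely generated as a two-sided ideal of $\hat{\Gamma}_B$, but you stop at an "Artin-Rees-style coherence problem" without resolving it. That coherence problem is not a small finishing detail — it \emph{is} the content of the lemma. Everything before it (the induction on $n$, the exact sequence $0 \to J^n/J^{n+1} \to \hat{\Gamma}_B/J^{n+1} \to \hat{\Gamma}_B/J^n \to 0$, the observation that a finitely generated bimodule over the finite-dimensional algebra $\hat{\Gamma}_B/J$ is finite-dimensional, and the cofinality of $\{\frak{p}^N\}$ in $\fwords{B}$) is correct, but it only repackages the problem.

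The paper's proof takes a more direct route that sidesteps the finite generation question entirely. Writing $\frak{b} = \frak{m}_1 \cap \cdots \cap \frak{m}_k$, it identifies $\hat{\Gamma}_B$ with the $\frak{b}$-adic completion $\Tilde{\Gamma} = \varprojlim_n \Gamma/\frak{b}^n$ by explicitly constructing mutually inverse maps, shows that $J$ corresponds to $\Tilde{\frak{b}} = \ker\bigl(\Tilde{\Gamma} \to \Gamma/\frak{b}\bigr)$, and then invokes the standard fact (Artin--Rees, via \cite{atiyah1969introduction}) that noetherianity gives $\Tilde{\frak{b}}^n = \Tilde{\frak{b}^n}$. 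From this, $\hat{\Gamma}_B/J^n \cong \Tilde{\Gamma}/\Tilde{\frak{b}^n} \cong \Gamma/\frak{b}^n$, which is finite-dimensional — a sharper conclusion than mere finite-dimensionality, and obtained without ever discussing generators of $J$. If you want to salvage your approach, the fact you need is essentially $J = \frak{b}\hat{\Gamma}_B$ (equivalently $J = \frak{p}\hat{\Gamma}_B$), which again follows from the same Artin--Rees input; so the cleanest fix is to prove that identification directly rather than attempt to build coherent representatives across the inverse system by hand.
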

\begin{proof}
    Say $B = \{\frak{m}_1, \ldots, \frak{m}_k\}$ and $\frak{b} = \frak{m}_1 \cap \cdots \cap \frak{m}_k$. We will denote the $\frak{b}$-adic completion of $\Gamma$ by $\Tilde{\Gamma} = \lim_n \Gamma/\frak{b}^n$ and the $\frak{b}$-adic completion of a $\Gamma$-module $M$ by $\Tilde{M} = \lim_n M/\frak{b}^nM$. We will show $\hat{\Gamma}_B \cong \Tilde{\Gamma}$, and use standard properties of the $\frak{b}$-adic completion to prove the proposition.

    We have induced maps defined as follows:
    \[\begin{tikzcd}
        \hat{\Gamma}_B \arrow[r, dashed, "\varphi"] \arrow[d] & \Tilde{\Gamma}\arrow[d] & \Tilde{\Gamma} \arrow[r, dashed, "\psi"] \arrow[d] &\hat{\Gamma}_B \arrow[d] \\
        \Gamma/(\frak{m}_1\cdots\frak{m}_k)^n \arrow[r] & \Gamma/\frak{b}^n & \Gamma/\frak{b}^n \arrow[r] & \Gamma/\frak{m}_{i_1}\cdots \frak{m}_{i_n}
    \end{tikzcd}\]
    The composition $\psi \circ \varphi$ is the identity by the diagram:
    \[\begin{tikzcd}
        \hat{\Gamma}_B \arrow[r, "\varphi"] \arrow[d] & \Tilde{\Gamma}\arrow[d] \arrow[r, "\psi"] \arrow[d] &\hat{\Gamma}_B \arrow[d] \\
        \Gamma/(\frak{m}_1\cdots\frak{m}_k)^n \arrow[r] & \Gamma/\frak{b}^n \arrow[r] & \Gamma/\frak{m}_{i_1}\cdots \frak{m}_{i_n}
    \end{tikzcd}\]
    and similarly, the composition $\varphi \circ \psi$ is the identity by the diagram:
    \[\begin{tikzcd}
        \Tilde{\Gamma} \arrow[r, dashed, "\psi"] \arrow[d] & \hat{\Gamma}_B \arrow[d] \arrow[r, dashed, "\varphi"] \arrow[d] & \Tilde{\Gamma}\arrow[d]\\
        \Gamma/\frak{b}^{nk} \arrow[r] & \Gamma/(\frak{m}_1\cdots\frak{m}_k)^n \arrow[r] & \Gamma/\frak{b}^n
    \end{tikzcd}\]
    so that $\hat{\Gamma}_B \cong \Tilde{\Gamma}$. Furthermore, we have that $\Tilde{\frak{b}} = \ker(\Tilde{\Gamma} \rightarrow \Gamma/\frak{b})$ is contained in $\mathrm{Rad}(\Tilde{\Gamma})$, since
    \[\begin{tikzcd}
        \Tilde{\Gamma} \arrow[r, dashed, "\psi"] \arrow[d] &\hat{\Gamma}_B \arrow[d, "\pi_{\frak{m}_i}"] \\
        \Gamma/\frak{b} \arrow[r] & \Gamma/\frak{m}_i
    \end{tikzcd}\]
    for every $i$ \big(and $\mathrm{Rad}(\hat{\Gamma}_B) = \bigcap_{\frak{m} \in B}\ker(\pi_\frak{m})$ by Lemma \ref{lem:completerad}\big). Conversely, we have the surjective map $\Tilde{\Gamma} \rightarrow \Gamma/\frak{b}$ so that $\mathrm{Rad}(\Tilde{\Gamma})$ maps into $\mathrm{Rad}(\Gamma/\frak{b})$. Since $\Gamma/\frak{b} \cong \Gamma/\frak{m}_1 \times \cdots \times \Gamma/\frak{m}_k$ is a finite-dimensional semisimple algebra, $\mathrm{Rad}(\Gamma/\frak{b}) = 0$ and $\mathrm{Rad}(\Tilde{\Gamma}) \subseteq \Tilde{\frak{b}}$.

    To complete the proof, we show $\Tilde{\Gamma}/(\mathrm{Rad}(\Tilde{\Gamma}))^n = \Tilde{\Gamma}/\Tilde{\frak{b}}^n$ is finite dimensional for every $n$. Since $\Gamma$ is noetherian, $\Tilde{\frak{b}}^n = \Tilde{\frak{b}^n}$ (see, for example, \cite{atiyah1969introduction}). Now $\Tilde{\frak{b}^n} = \ker(\Tilde{\Gamma} \rightarrow \Gamma/\frak{b}^n)$, so $\Tilde{\Gamma}/\Tilde{\frak{b}^n} \cong \Gamma/\frak{b}^n$ which is finite-dimensional since $\Gamma$ is noetherian and $\Gamma/\frak{b}$ is finite-dimensional.
    \end{proof}

 We make use of the following well-known result. A full proof can be found in \cite[Thm.~18]{drozd1994harish}, for the case where $\sim$ is the equality relation.

\begin{proposition}\label{prop:modulecorrespondence}
    Let $B \in \cfsclasses$. There is a one to one correspondence between the isomorphism classes of simple $\mathcal{A}(B,B)$-modules and the isomorphism classes of simple $\mathcal{A}$-modules $F$ with $F(B) \neq 0$.
\end{proposition}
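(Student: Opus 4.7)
The approach is to realize $\mathcal{A}$ as a ``category algebra'' so that the standard corner-ring correspondence at the idempotent $\mathrm{id}_B \in \mathcal{A}(B,B)$ gives the bijection. I would define explicit mutually inverse assignments $F \mapsto F(B)$ (evaluation at $B$, with its canonical $\mathcal{A}(B,B)$-action) and $M \mapsto \overline{F}_M$ (a quotient of an induced module), then check each preserves simplicity and the two are mutually inverse on isoclasses.

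For the forward assignment, given a simple $\mathcal{A}$-module $F$ with $F(B) \neq 0$, endow $F(B)$ with the $\mathcal{A}(B,B)$-action $\alpha \cdot v = F_{B,B}(\alpha)(v)$. To verify simplicity, for any nonzero $\mathcal{A}(B,B)$-submodule $W \subseteq F(B)$ define a sub-$\mathcal{A}$-module $\langle W \rangle \subseteq F$ by
\begin{equation*}
    \langle W \rangle(C) = \sum_{\alpha \in \mathcal{A}(B,C)} F_{B,C}(\alpha)(W).
\end{equation*}
The functoriality of $F$ shows $\langle W \rangle$ is $\mathcal{A}$-stable, and $W$ being $\mathcal{A}(B,B)$-stable forces $\langle W \rangle(B) = W$. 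Simplicity of $F$ then gives $\langle W \rangle = F$, whence $W = F(B)$.

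For the backward assignment, given a simple left $\mathcal{A}(B,B)$-module $M$, form the induced $\mathcal{A}$-module $F_M(C) = \mathcal{A}(B,C) \otimes_{\mathcal{A}(B,B)} M$, on which $\beta \in \mathcal{A}(C,D)$ acts by $\alpha \otimes m \mapsto (\beta \circ \alpha) \otimes m$; note $F_M(B) \cong M$. Let $K \subseteq F_M$ be the sum of all sub-$\mathcal{A}$-modules of $F_M$ vanishing at $B$, which is itself such, hence the unique maximal one. I claim $\overline{F}_M := F_M/K$ is simple: any nonzero sub-$\mathcal{A}$-module $\overline{N} \subseteq \overline{F}_M$ lifts to some $N \supsetneq K$, so $N(B) \neq 0$ by maximality of $K$; then $N(B) = M$ by simplicity of $M$; and since $F_M$ is generated as an $\mathcal{A}$-module by $M = F_M(B)$ via the tensor presentation, $N(C) \supseteq \sum_\alpha (F_M)_{B,C}(\alpha)(M) = F_M(C)$ for every $C$, so $N = F_M$ and $\overline{N} = \overline{F}_M$.

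To see the two assignments are mutually inverse on isoclasses: $M \mapsto \overline{F}_M \mapsto \overline{F}_M(B) = M$ is immediate from $F_M(B) \cong M$ and $K(B) = 0$. For $F \mapsto F(B) \mapsto \overline{F_{F(B)}}$ I would use the evaluation map $\phi: F_{F(B)} \to F$, $\alpha \otimes v \mapsto F_{B,C}(\alpha)(v)$. Its image contains $F(B) \neq 0$, so by the forward-direction argument applied to $\mathrm{im}(\phi)$, $\phi$ is surjective. Since $\phi$ restricts to the identity on $F(B)$, $\ker(\phi)(B) = 0$, hence $\ker(\phi) \subseteq K$ by maximality of $K$; conversely $\phi(K) \subseteq F$ is a sub-$\mathcal{A}$-module vanishing at $B$, so zero by the forward argument applied to $F$, giving $K \subseteq \ker(\phi)$. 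Hence $\phi$ descends to an isomorphism $\overline{F_{F(B)}} \cong F$. The main technical points are verifying that $F_M$ is generated by its $B$-component (immediate from the tensor presentation, since $\alpha \otimes m = (F_M)_{B,C}(\alpha)(1_B \otimes m)$) and that the class of sub-$\mathcal{A}$-modules vanishing at $B$ is closed under sums so that the maximal such $K$ exists.
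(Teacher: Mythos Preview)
Your proposal is correct and follows essentially the same approach as the paper's (very terse) proof: the forward map $F \mapsto F(B)$ and the backward map $M \mapsto \big(\mathcal{A}(B,-)\otimes_{\mathcal{A}(B,B)} M\big)/K$ are exactly what the paper uses, and your characterization of $K$ as the largest submodule vanishing at $B$ agrees with the paper's ``unique maximal submodule'' (since $F_M$ is generated by $F_M(B)\cong M$, every proper submodule vanishes at $B$). You have supplied the details the paper omits, including the verification that the two assignments are mutually inverse.
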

\begin{proof}
    If $F$ is a simple $\mathcal{A}$-module, then $F(B)$ is a simple $\mathcal{A}(B,B)$-module. Given a simple $\mathcal{A}(B,B)$-module $V$, take $G = \mathcal{A}(B, -) \otimes_{\mathcal{A}(B,B)} V$. Then $G$ is an $\mathcal{A}$-module with a unique maximal submodule $K$. The quotient $H = G/K$ is a simple $\mathcal{A}$-module with $H(B) \cong V$.
\end{proof}

The last thing we will need to prove Theorem \ref{thm:irreducibles} is the following lemma, which implies that simple profinite $\mathcal{A}$-modules are simple as $\mathcal{A}$-modules.

\begin{lemma}\label{lem:subtop}
    If $F$ is a profinite (discrete) $\mathcal{A}$-module, and $G$ is an $\mathcal{A}$-submodule of $F$, then $G$ is a profinite (discrete) $\mathcal{A}$-module.
\end{lemma}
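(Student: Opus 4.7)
The plan is to unwind the definitions of profinite/discrete $\mathcal{A}$-modules and show that the restriction of the structure maps of $F$ to $G$ inherits the relevant factorization property. First I would fix the interpretation: an $\mathcal{A}$-submodule $G\le F$ consists of subspaces $G(B)\le F(B)$ with $F_{B,C}(\alpha)(G(B))\subseteq G(C)$ for every $\alpha\in\mathcal{A}(B,C)$, so that $G_{B,C}(\alpha)$ is literally $F_{B,C}(\alpha)|_{G(B)}$, viewed as a map into $G(C)$.

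For the discrete case, since $F$ is a discrete $\mathcal{A}$-module, Lemma \ref{discretelemma} supplies $\frak{m}\in\fwords{B}$ and $\frak{n}\in\fwords{C}$ and a linear map $\bar{F}:\frac{A}{\frak{n}A+A\frak{m}}\to\Hom_\Bbbk(F(B),F(C))$ through which $F_{B,C}$ factors. The projection $\mathcal{A}(B,C)\to\frac{A}{\frak{n}A+A\frak{m}}$ is surjective, so every element of the quotient is represented by some $\alpha\in\mathcal{A}(B,C)$, and its image under $\bar{F}$ is $F_{B,C}(\alpha)$, which restricts to $G_{B,C}(\alpha):G(B)\to G(C)$ because $G$ is a submodule. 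This restriction is independent of the choice of $\alpha$, so it induces a well-defined linear map $\bar{G}:\frac{A}{\frak{n}A+A\frak{m}}\to\Hom_\Bbbk(G(B),G(C))$ through which $G_{B,C}$ factors, and Lemma \ref{discretelemma} again certifies that $G_{B,C}$ is continuous in the discrete topology.

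For the profinite case, the limit topology on $\Hom_\Bbbk(G(B),G(C))$ means it suffices to verify, for each finite-dimensional subspace $U\le G(B)$, that the composite $\mathcal{A}(B,C)\to\Hom_\Bbbk(G(B),G(C))\to\Hom_\Bbbk(U,G(C))$ is continuous with respect to the discrete topology on the target. Since $U$ is also a finite-dimensional subspace of $F(B)$ and $F$ is profinite, continuity of $F_{B,C}$ in the limit topology (combined with Lemma \ref{discretelemma}) gives $\frak{m}\in\fwords{B}$ and $\frak{n}\in\fwords{C}$ and a factorization of $\mathcal{A}(B,C)\to\Hom_\Bbbk(U,F(C))$ through $\frac{A}{\frak{n}A+A\frak{m}}$. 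The image of $\mathcal{A}(B,C)$ is contained in $\Hom_\Bbbk(U,G(C))$ because $U\le G(B)$ and $G$ is an $\mathcal{A}$-submodule; by surjectivity of the projection, the image of the factoring map out of $\frac{A}{\frak{n}A+A\frak{m}}$ then lies in $\Hom_\Bbbk(U,G(C))$ as well, yielding the required continuous factorization for $G$.

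There is no genuine obstacle here; the statement is a formal consequence of the fact that in both topologies, continuity is encoded by factorizations through the quotients $\frac{A}{\frak{n}A+A\frak{m}}$, and these quotients depend only on $\mathcal{A}$, not on the ambient module. The only small care needed is to use the surjectivity of $\mathcal{A}(B,C)\to\frac{A}{\frak{n}A+A\frak{m}}$ when passing from "image of $\mathcal{A}(B,C)$ lies in $\Hom_\Bbbk(-,G(C))$" to "image of the quotient lies in $\Hom_\Bbbk(-,G(C))$".
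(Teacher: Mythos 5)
Your proof is correct and follows essentially the same route as the paper: both arguments exploit the factorization of the structure map through $\frac{A}{\frak{n}A+A\frak{m}}$ and use the submodule condition to corestrict the codomain from $\Hom_\Bbbk(-,F(C))$ to $\Hom_\Bbbk(-,G(C))$. The paper obtains the corestriction via injectivity of the inclusion $\Hom_\Bbbk(W,G(C))\hookrightarrow\Hom_\Bbbk(W,F(C))$, whereas you use surjectivity of the projection $\mathcal{A}(B,C)\twoheadrightarrow\frac{A}{\frak{n}A+A\frak{m}}$; these are two phrasings of the same observation.
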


\begin{proof}
    Let $F$ be a profinite $\mathcal{A}$-module. Suppose $G$ is an $\mathcal{A}$-submodule of $F$, and $B,C \in \cfsclasses$. Then the natural inclusion map $G \hookrightarrow F$ yields the commutative diagram:
    \[\begin{tikzcd}
        \mathcal{A}(B,C) \arrow[r] \arrow[d] & \Hom_\Bbbk(F(B),F(C)) \arrow[d]\\
        \Hom_\Bbbk(G(B),G(C)) \arrow[r] & \Hom_\Bbbk(G(B), F(C))
    \end{tikzcd}\]
    If $W \leq G(B)$ is finite-dimensional, then we may choose $\frak{m} \in \fwords{B}$ and $\frak{n} \in \fwords{C}$ so that
    \begin{equation}\begin{tikzcd}
        \mathcal{A}(B,C) \arrow[r] \arrow[d] & \Hom_\Bbbk(F(B),F(C)) \arrow[d]\\
        \frac{A}{\frak{n}A + A\frak{m}} \arrow[r] & \Hom_\Bbbk(W, F(C))
    \end{tikzcd}\end{equation}
    commutes, since $F$ is profinite. Finally, note that
    \[\begin{tikzcd}
        \Hom_\Bbbk(F(B), F(C)) \arrow[rr] \arrow[dr] & & \Hom_\Bbbk(G(B), F(C)) \arrow[dl]\\
        & \Hom_\Bbbk(W, F(C))
    \end{tikzcd}\]
    commutes. Combining these, we receive the commutative diagram:
    \[\begin{tikzcd}
        A\arrow[rrr] \arrow[d] & & & \frac{A}{\frak{n}A + A\frak{m}}\arrow[dddd, "f"]\\
        \mathcal{A}(B,C) \arrow[urrr, "\pi"]\arrow[rr]\arrow[dd, "G_{B,C}"] & & \Hom_\Bbbk(F(B), F(C))\arrow[dd]\arrow[dddr]\\
        \\
        \Hom_\Bbbk(G(B), G(C))\arrow[d, "i"]\arrow[rr] & & \Hom_\Bbbk(G(B), F(C))\arrow[dr]\\
        \Hom_\Bbbk(W, G(C))\arrow[rrr, "j"] & & & \Hom_\Bbbk(W, F(C))
    \end{tikzcd}\]
    In particular, the outer square factors:
    \[\begin{tikzcd}
        A \arrow[r, twoheadrightarrow] \arrow[d] & \frac{A}{\frak{n}A + A\frak{m}} \arrow[d, "f"]\arrow[dl, dashed, "g"]\\
        \Hom_\Bbbk(W, G(C))\arrow[r, hookrightarrow, "j"] & \Hom_\Bbbk(W, F(C))
    \end{tikzcd}\]
    We now have that $j \circ i \circ G_{B,C} = f \circ \pi = j \circ g \circ \pi$. Since $j$ is injective, we have the commutative diagram:
    \[\begin{tikzcd}
        \mathcal{A}(B,C) \arrow[r, "G_{B,C}"] \arrow[d, "\pi"] & \Hom_\Bbbk(G(B),G(C)) \arrow[d, "i"]\\
        \frac{A}{\frak{n}A + A\frak{m}} \arrow[r, "g"] & \Hom_\Bbbk(W,G(C))
    \end{tikzcd}\]
    and thus $G$ is a profinite $\mathcal{A}$-module.

    If $F$ is a discrete $\mathcal{A}$-module, then the above argument can be simplified to show that $G$ is also a discrete $\mathcal{A}$-module (simply replace $W$ with $G(B)$).
\end{proof}

\begin{proof}[Proof of Theorem \ref{thm:irreducibles}]
    Suppose $B$ is finite, $\Gamma$ is noetherian, and $\mathcal{A}(B,B)$ is finitely generated as a left and right $\hat{\Gamma}_B$-module. By Corollary \ref{cor:finiterad}, $\hat{\Gamma}_B/\mathrm{Rad}(\hat{\Gamma}_B)$ is finite-dimensional. Hence $\frac{\mathcal{A}(B,B)}{\mathcal{A}(B,B)\mathrm{Rad}(\hat{\Gamma}_B)}$ is also finite-dimensional. So there is $n$ such that $(\mathrm{Rad}(\hat{\Gamma}_B))^n \mathcal{A}(B,B) \subseteq \mathcal{A}(B,B)\mathrm{Rad}(\hat{\Gamma}_B)$. If $I$ is a maximal right ideal of $\mathcal{A}(B,B)$, then $(\mathrm{Rad}(\hat{\Gamma}_B))^n \mathcal{A}(B,B) \subseteq I$, otherwise $\mathcal{A}(B,B) = I + (\mathrm{Rad}(\hat{\Gamma}_B))^n \mathcal{A}(B,B) \subseteq I + \mathcal{A}(B,B)\mathrm{Rad}(\hat{\Gamma}_B)$ (which would imply $I = \mathcal{A}(B,B)$, by Nakayama's Lemma). Consequently, $(\mathrm{Rad}(\hat{\Gamma}_B))^n \mathcal{A}(B,B) \subseteq \mathrm{Rad}(\mathcal{A}(B,B))$. By Lemma \ref{lem:radfinitepower}, $\hat{\Gamma}_B/(\mathrm{Rad}(\hat{\Gamma}_B))^n$ is finite-dimensional. So $\frac{\mathcal{A}(B,B)}{(\mathrm{Rad}(\hat{\Gamma}_B))^n\mathcal{A}(B,B)}$ and $\frac{\mathcal{A}(B,B)}{\mathrm{Rad}(\mathcal{A}(B,B))}$ are also finite-dimensional.

    Since $\frac{\mathcal{A}(B,B)}{\mathrm{Rad}(\mathcal{A}(B,B))}$ is finite-dimensional, there are only finitely many isomorphism classes of simple $\mathcal{A}(B,B)$-modules, and they are all finite-dimensional. The correspondence of Proposition \ref{prop:modulecorrespondence} then gives that there are only finitely many isomorphism classes of simple $\mathcal{A}$-modules $F$ with $F(B) \neq 0$, and each has $F(B)$ finite-dimensional. We may say the same thing about profinite or discrete $\mathcal{A}$-modules, as Lemma \ref{lem:subtop} implies that any simple profinite $\mathcal{A}$-module is a simple $\mathcal{A}$-module. The theorem then follows from the equivalence of Theorem \ref{thm:equivalence}.
\end{proof}

\subsection{Upper Triangular Subalgebra}
We return one final time to the upper triangular algebras of Section \ref{subsec:hctri}. We note that $\Gamma$ is noetherian, and each block of $\sim$ (the $\mathrm{Ext}$ relation) is finite. We will show that for each $B \in \cfsclasses$, $\mathcal{A}(B,B)$ is finitely generated as a left and right $\hat{\Gamma}_B$-module, and we may apply Theorem \ref{thm:irreducibles}.

We first note that for any $\frak{m} \in \cfs(\Gamma_0)$ and $i,j \geq 0$, we have the map: \begin{equation}\label{eq:usliso}\Gamma_0 \rightarrow \frac{A_0}{\frak{m}^i A_0 + A_0 \frak{m}^j}\end{equation} with kernel $(\frak{m}^i A_0 + A_0 \frak{m}^j) \cap \Gamma_0$. Again realizing $A_0$ as a generalized Weyl algebra \cite{bavula1992generalized}, we have $A_0$ equal to both $\bigoplus_{n \in \mathbb{Z}} \Gamma_0 X^n$ and $\bigoplus_{n \in \mathbb{Z}} X^n\Gamma_0$ where $$X^n = \begin{cases}
e^n & n > 0\\
1 & n = 0\\
f^{-n} & n < 0
\end{cases}.$$ Taking $\sigma$ to be the automorphism of $\Gamma_0$ that fixes $\mathbb{C}[c]$ and maps $h$ to $h-2$, we can compute
\begin{align*}
    (\frak{m}^i A_0 + A_0 \frak{m}^j) \cap \Gamma_0 &= \Big(\bigoplus_{n \in \mathbb{Z}} \frak{m}^i X^n + \bigoplus_{n \in \mathbb{Z}} X^n\frak{m}^j\Big) \cap \Gamma_0\\
    &= \Big(\bigoplus_{n \in \mathbb{Z}} \big(\frak{m}^i + \sigma^n(\frak{m}^j)\big)X^n\Big) \cap \Gamma_0 \\
   &= \big(\frak{m}^i + \sigma^0(\frak{m}^j)\big)X^0\\
   &= \frak{m}^i + \frak{m}^j.
\end{align*}
We may show that \ref{eq:usliso} is surjective by demonstrating that $X^n \in \frak{m}^i A_0 + A_0 \frak{m}^j$ for $n \neq 0$. Say $\frak{m} = (h - \lambda, c - \mu)$. Consider the action of $\Gamma_0 \otimes_\mathbb{C} \Gamma_0^\text{op}$ on $\frac{A_0}{\frak{m}^i A_0 + A_0 \frak{m}^j}$. For $n \neq 0$, the element $(h-\lambda - 2n)\otimes 1 + 1 \otimes (h-\lambda)$ is invertible modulo $\frak{m} \otimes_\mathbb{C} \Gamma_0 + \Gamma_0 \otimes_\mathbb{C} \frak{m}$ and therefore is invertible modulo $(\frak{m} \otimes_\mathbb{C} \Gamma_0 + \Gamma_0 \otimes_\mathbb{C} \frak{m})^{i+j} \subseteq \frak{m}^i \otimes_\mathbb{C} \Gamma_0 + \Gamma_0 \otimes_\mathbb{C} \frak{m}^j$. So $(h-\lambda - 2n)\otimes 1 + 1 \otimes (h-\lambda)$ acts invertibly on $X^n + \frak{m}^i A_0 + A_0 \frak{m}^j$, but also annihilates $X^n + \frak{m}^i A_0 + A_0 \frak{m}^j$. Thus $X^n \in \frak{m}^i A_0 + A_0 \frak{m}^j$.

Now for any $\frak{m} \in \cfs(\Gamma_0)$ and $i,j \geq 0$,
$$\frac{A_0}{\frak{m}^i A_0 + A_0 \frak{m}^j} \cong \frac{\Gamma_0}{\frak{m}^i + \frak{m}^j}.$$ So when $\frak{m} \neq \frak{z}$,
$$
\frac{A}{\overline{\frak{m}}^i A + A \overline{\frak{m}}^j} \cong \left[\begin{array}{cc}
    \frac{A_0}{\frak{m}^i A_0 + A_0 \frak{m}^j} & 0 \\
    0 & 0
\end{array}\right] \cong \left[\begin{array}{cc}
    \frac{\Gamma_0}{\frak{m}^i +  \frak{m}^j} & 0 \\
    0 & 0
\end{array}\right] \cong \frac{\Gamma}{\overline{\frak{m}}^i + \overline{\frak{m}}^j}
$$
and hence $$\mathcal{A}(\{\overline{\frak{m}}\}, \{\overline{\frak{m}}\}) \cong \lim_{i,j} \frac{\Gamma}{\overline{\frak{m}}^i + \overline{\frak{m}}^j}$$ which is cyclic as both a left and right $\hat{\Gamma}_{\overline{\frak{m}}}$-module. Similarly, $\mathcal{A}(\{\underline{\frak{m}}\}, \{\underline{\frak{m}}\})$ is cyclic as both a left and right $\hat{\Gamma}_{\underline{\frak{m}}}$-module.

For the block $B = \{\overline{\frak{z}}, \underline{\frak{z}}\}$, recall that every $\frak{p} \in \fwords{\{\overline{\frak{z}}, \underline{\frak{z}}\}}$ is either equal to $\overline{\frak{z}}^{n(\frak{p})} \underline{\frak{z}}^{m(\frak{p})}$ where $n(\frak{p}),m(\frak{p}) \geq 1$ or equal to $\underline{\frak{z}}^{m(\frak{p})}\overline{\frak{z}}^{n(\frak{p})}$ where $n(\frak{p}),m(\frak{p}) \geq 0$. Furthermore, $$\overline{\frak{z}}^n \underline{\frak{z}}^m = \left[\begin{array}{cc} \frak{z}^n & 0 \\
    0 & \frak{z}^m
\end{array} \right] \qquad (n,m \geq 1)$$ and 
$$\underline{\frak{z}}^m \overline{\frak{z}}^n = \left[\begin{array}{cc} \frak{z}^n & \mathbb{C}x \\
    0 & \frak{z}^m
\end{array} \right] \qquad (n,m \geq 0).$$
Let $\frak{p}, \frak{l} \in \fwords{\{\overline{\frak{z}}, \underline{\frak{z}}\}}$. If $\frak{p},\frak{l} \in \{\overline{\frak{z}}^n \underline{\frak{z}}^m: n,m \geq 1\}$, then
$$\frac{A}{\frak{p}A + A\frak{l}} \cong \left[\begin{array}{cc}
    \frac{A_0}{\frak{z}^{n(\frak{p})} A_0 + A_0 \frak{z}^{n(\frak{l})}} & \mathbb{C}x \\
    0 & \frac{A_0}{\frak{z}^{m(\frak{p})} A_0 + A_0 \frak{z}^{m(\frak{l})}}
\end{array}\right] \cong \left[\begin{array}{cc}
    \frac{\Gamma_0}{\frak{z}^{n(\frak{p})} + \frak{z}^{n(\frak{l})}} & \mathbb{C}x \\
    0 & \frac{\Gamma_0}{\frak{z}^{m(\frak{p})} + \frak{z}^{m(\frak{l})}}
\end{array}\right] \cong \frac{\Gamma}{\frak{p} + \frak{l}}.$$
Otherwise,
$$\frac{A}{\frak{p}A + A\frak{l}} \cong \left[\begin{array}{cc}
    \frac{A_0}{\frak{z}^{n(\frak{p})} A_0 + A_0 \frak{z}^{n(\frak{l})}} & 0 \\
    0 & \frac{A_0}{\frak{z}^{m(\frak{p})} A_0 + A_0 \frak{z}^{m(\frak{l})}}
\end{array}\right] \cong \left[\begin{array}{cc}
    \frac{\Gamma_0}{\frak{z}^{n(\frak{p})} + \frak{z}^{n(\frak{l})}} & 0 \\
    0 & \frac{\Gamma_0}{\frak{z}^{m(\frak{p})} + \frak{z}^{m(\frak{l})}}
\end{array}\right] \cong \frac{\Gamma}{\frak{p} + \frak{l}}.$$
So $$\mathcal{A}(B, B) \cong \lim_{\frak{p},\frak{l} \in \fwords{B}} \frac{\Gamma}{\frak{p} + \frak{l}}.$$ Suppose $(\gamma_{\frak{p},\frak{l}} + \frak{p} + \frak{l})_{\frak{p},\frak{l} \in \fwords{B}} \in \lim_{\frak{p},\frak{l} \in \fwords{B}} \frac{\Gamma}{\frak{p} + \frak{l}}$. Note \begin{align*}\gamma_{\frak{l},\frak{l}} - \gamma_{\frak{p}\frak{l},\frak{l}} &\in \frak{l}+\frak{l}\subseteq \frak{p}+\frak{l} \\
\gamma_{\frak{p}\frak{l},\frak{l}} - \gamma_{\frak{p},\frak{l}} &\in \frak{p}+\frak{l}\end{align*} so $$(\gamma_{\frak{p},\frak{l}} + \frak{p} + \frak{l})_{\frak{p},\frak{l} \in \fwords{B}} = (\gamma_{\frak{l},\frak{l}} + \frak{p} + \frak{l})_{\frak{p},\frak{l} \in \fwords{B}}$$ and similarly $$(\gamma_{\frak{p},\frak{l}} + \frak{p} + \frak{l})_{\frak{p},\frak{l} \in \fwords{B}} = (\gamma_{\frak{p},\frak{p}} + \frak{p} + \frak{l})_{\frak{p},\frak{l} \in \fwords{B}}.$$
We have $$(\gamma_{\frak{p},\frak{l}} + \frak{p} + \frak{l})_{\frak{p},\frak{l} \in \fwords{B}} = (\gamma_{\frak{p},\frak{p}} + \frak{p} + \frak{l})_{\frak{p},\frak{l} \in \fwords{B}} = (\gamma_{\frak{p},\frak{p}} + \frak{p})_{\frak{p} \in \fwords{B}}.(1 + \frak{p} + \frak{l})_{\frak{p},\frak{l} \in \fwords{B}}$$ and $$(\gamma_{\frak{p},\frak{l}} + \frak{p} + \frak{l})_{\frak{p},\frak{l} \in \fwords{B}} = (\gamma_{\frak{l},\frak{l}} + \frak{p} + \frak{l})_{\frak{p},\frak{l} \in \fwords{B}} = (1 + \frak{p} + \frak{l})_{\frak{p},\frak{l} \in \fwords{B}}.(\gamma_{\frak{l},\frak{l}} + \frak{l})_{\frak{l} \in \fwords{B}}$$ showing that $\lim_{\frak{p},\frak{l} \in \fwords{B}} \frac{\Gamma}{\frak{p} + \frak{l}}$ is cyclic as both a left and right $\hat{\Gamma}_B$-module.

\renewcommand\appendix{\par
\setcounter{section}{0}%
  \renewcommand{\thesection}{\Alph{section}}
}

 

\renewcommand{\theHsection}{A\arabic{section}}

 

\appendix

\section{Appendix}\label{sec:appendix}
We provide proofs here of the properties listed in Proposition \ref{prop:comp}.
\begin{proposition}
The composition in Definition \ref{def:comp} is a well-defined $(\Gamma, \Gamma)$-bimodule map.
\end{proposition}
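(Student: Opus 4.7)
My plan is to verify four sub-claims: (a) the class $b_0 a_0 + \frak{l}A + A\frak{m}$ is independent of the choice of lifts $a_0, b_0$; (b) it is independent of the auxiliary ideal $\frak{n}$ chosen via Lemma \ref{doublecosetiso}; (c) the resulting family $\{(\beta \circ \alpha)_{\frak{m},\frak{l}}\}$ is compatible with the projections of the inverse limit defining $\mathcal{A}(B, D)$; and (d) the map $\beta \otimes \alpha \mapsto \beta \circ \alpha$ respects the $(\Gamma, \Gamma)$-bimodule structure. Step (a) is immediate from the one-sided absorption $b_0 \cdot A\frak{m} \subseteq A\frak{m}$ and $\frak{l}A \cdot a_0 \subseteq \frak{l}A$. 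For (b), I will observe that any two valid $\frak{n}, \frak{n}'$ admit a common refinement $\frak{n}\frak{n}'$ which also satisfies the conditions of Lemma \ref{doublecosetiso}; since the representatives $a_0 + A\frak{m} \in (A/A\frak{m})(C)$ and $b_0 + \frak{l}A \in (A/\frak{l}A)(C)$ are uniquely determined by $\alpha$ and $\beta$ (the isomorphisms of Lemma \ref{doublecosetiso} being natural in $\frak{n}$), the product $b_0 a_0$ is unchanged modulo $\frak{l}A + A\frak{m}$. Step (c) is then straightforward: enlarging $(\frak{m}, \frak{l})$ to $(\frak{m}', \frak{l}')$ with $\frak{m} \subseteq \frak{m}'$ and $\frak{l} \subseteq \frak{l}'$ allows reuse of the same $\frak{n}$ and the same $a_0, b_0$, and $b_0 a_0 + \frak{l}A + A\frak{m}$ projects to $b_0 a_0 + \frak{l}'A + A\frak{m}'$.

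For (d), $\Bbbk$-bilinearity is evident. The $\Gamma$-balanced property $(\beta\gamma) \circ \alpha = \beta \circ (\gamma\alpha)$ holds because $b_0 \gamma + \frak{l}A \in (A/\frak{l}A)(C)$ and $\gamma a_0 + A\frak{m} \in (A/A\frak{m})(C)$ remain valid representatives (each block space is stable under the relevant one-sided $\Gamma$-action), so both sides reduce to $b_0 \gamma a_0 + \frak{l}A + A\frak{m}$. For the outer actions, consider $(\gamma\beta) \circ \alpha = \gamma(\beta \circ \alpha)$: the representative $b_0'$ of $(\gamma\beta)_{\frak{n},\frak{l}}$ in $(A/\frak{l}A)(C)$ need not equal $\gamma b_0$, but satisfies $b_0' - \gamma b_0 \in \frak{l}A + A\frak{n}$. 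The discrepancy $(b_0' - \gamma b_0)a_0$ then lies in $\frak{l}A + A(\frak{n} a_0) \subseteq \frak{l}A + A\frak{m}$, where the crucial absorption $\frak{n} a_0 \subseteq A\frak{m}$ is exactly what the choice of $\frak{n}$ in Lemma \ref{doublecosetiso} guarantees (since $\frak{n}$ annihilates $(A/A\frak{m})(C)$ on the left). The right $\Gamma$-action is symmetric, using the companion absorption $b_0 \frak{n} \subseteq \frak{l}A$. I expect (d) to be the main obstacle: block-space representatives fail to transform naively under multiplication by $\Gamma$ on the side opposite to the block decomposition, forcing a change of representative, and the resolution depends essentially on the two absorption properties $\frak{n} a_0 \subseteq A\frak{m}$ and $b_0 \frak{n} \subseteq \frak{l}A$ built into the choice of $\frak{n}$.
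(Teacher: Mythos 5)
Your proof is correct, but it takes a genuinely different route from the paper. The paper never works with representatives $a_0, b_0$ at all: it defines the $(\frak{m},\frak{l})$-component of the composition as the composite of three canonical maps,
$\mathcal{A}(C,D) \otimes_\Gamma \mathcal{A}(B,C) \to \frac{A}{\frak{l}A + A\frak{n}} \otimes_\Gamma \frac{A}{\frak{n}A + A\frak{m}} \to (\frac{A}{\frak{l}A})(C) \otimes_\Gamma (\frac{A}{A\frak{m}})(C) \to \frac{A}{\frak{l}A + A\frak{m}}$,
where the last map is induced by multiplication $A \otimes_\Gamma A \to A$ and the middle map uses the bimodule isomorphisms of Lemma \ref{doublecosetiso}. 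Since every arrow in that chain is already a $(\Gamma,\Gamma)$-bimodule map, your items (a) (independence of representative) and (d) (bimodule structure) are automatic and never appear; the paper only has to verify your (b) and (c), via two commutative diagrams tracking the behavior as $\frak{n}$ shrinks and as $(\frak{m},\frak{l})$ shrink. Your element-level argument covers the same ground but makes visible the mechanism the paper's abstraction conceals: the two absorptions $\frak{n}a_0 \subseteq A\frak{m}$ and $b_0\frak{n} \subseteq \frak{l}A$ are precisely what makes the middle isomorphism a bimodule map in Lemma \ref{doublecosetiso}, so your (d) is where that lemma's content resurfaces in hands-on form. One small point worth making explicit in (a): you need $a_0 - a_0' \in A\frak{m}$ (not merely $\in \frak{n}A + A\frak{m}$), which follows because both lifts land in the block space $(A/A\frak{m})(C)$ and $(\frak{n}A + A\frak{m})/A\frak{m}$ is precisely the complement of that block space inside $A/A\frak{m}$; only after that reduction is the step truly "immediate from one-sided absorption." Both approaches are sound; the paper's is shorter and delegates the work to Lemma \ref{doublecosetiso}, while yours is more transparent about why the choice of $\frak{n}$ matters.
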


\begin{proof}
Let $\frak{m} \in \fwords{B}$ and $\frak{l} \in \fwords{D}$. Take $\frak{n} \in \fwords{C}$ as in Lemma \ref{doublecosetiso} so that $\frac{A}{\frak{l} A + A \frak{n}} \cong (\frac{A}{\frak{l} A})(C)$ and $\frac{A}{\frak{n} A + A \frak{m}} \cong (\frac{A}{A \frak{m}})(C)$.
We have a multiplication map:
\[
\begin{tikzcd}
  A/\frak{l} A \otimes_\Gamma A/A\frak{m} \arrow[d] & \arrow[l] A \otimes_\Gamma A \arrow[d] \\
  \frac{A}{\frak{l} A + A \frak{m}} & \arrow[l] A
\end{tikzcd}
\]
The $(\frak{m},\frak{l})$th component of our proposed composition is then given by $$\mathcal{A}(C, D) \otimes_\Gamma \mathcal{A}(B, C) \rightarrow \frac{A}{\frak{l}A + A\frak{n}} \otimes_\Gamma \frac{A}{\frak{n} A + A\frak{m}} \rightarrow (\frac{A}{\frak{l} A})(C) \otimes_\Gamma (\frac{A}{A\frak{m}})(C) \rightarrow \frac{A}{\frak{l} A + A \frak{m}}$$

As noted in Lemma \ref{doublecosetiso}, if we take $\frak{n}' \in \fwords{C}$ with $\frak{n}' \subseteq \frak{n}$, then we have:

\[
\begin{tikzcd}
  & \frac{A}{\frak{l}A + A\frak{n}'} \otimes_\Gamma \frac{A}{\frak{n}' A + A\frak{m}} \arrow[dd] \arrow[dr]\\
  \mathcal{A}(C, D) \otimes_\Gamma \mathcal{A}(B, C) \arrow[ur] \arrow[dr] & & (\frac{A}{\frak{l} A})(C) \otimes_\Gamma (\frac{A}{A\frak{m}})(C)\\
  & \frac{A}{\frak{l}A + A\frak{n}} \otimes_\Gamma \frac{A}{\frak{n} A + A\frak{m}} \arrow[ur]
\end{tikzcd}
\]
so the component map is well defined.

Now suppose $\frak{m}' \in \fwords{B}$ and $\frak{l}' \in \fwords{D}$ so that $\frak{m}' \subseteq \frak{m}$ and $\frak{l}' \subseteq \frak{l}$. Take $\frak{n}' \in \fwords{C}$ with $\frak{n}' \subseteq \frak{n}$ so that $\frac{A}{\frak{l}' A + A \frak{n}'} \cong (\frac{A}{\frak{l}' A})(C)$ and $\frac{A}{\frak{n}' A + A \frak{m}'} \cong (\frac{A}{A \frak{m}'})(C)$. Then the following diagram commutes:

\[
\begin{tikzcd}
  & \frac{A}{\frak{l}'A + A\frak{n}'} \otimes_\Gamma \frac{A}{\frak{n}' A + A\frak{m}'} \arrow[dd] \arrow[r] & (\frac{A}{\frak{l}' A})(C) \otimes_\Gamma (\frac{A}{A\frak{m}'})(C) \arrow[dd] \arrow[r] & \frac{A}{\frak{l}'A + A \frak{m}'} \arrow[dd]\\
  \mathcal{A}(C, D) \otimes_\Gamma \mathcal{A}(B, C) \arrow[ur] \arrow[dr] & \\
  & \frac{A}{\frak{l}A + A\frak{n}} \otimes_\Gamma \frac{A}{\frak{n} A + A\frak{m}} \arrow[r] & (\frac{A}{\frak{l} A})(C) \otimes_\Gamma (\frac{A}{A\frak{m}})(C) \arrow[r] & \frac{A}{\frak{l}A + A \frak{m}}
\end{tikzcd}
\]
So that this gives the desired map $\mathcal{A}(C, D) \otimes_\Gamma \mathcal{A}(B, C) \rightarrow \mathcal{A}(B, D)$.
\end{proof}

\begin{proposition}
For $\gamma \in \Gamma$ and $\alpha \in \mathcal{A}(B, C)$, $\gamma.\alpha = (\gamma) \circ \alpha$ and $\alpha.\gamma = \alpha \circ (\gamma)$. Consequently, $(1) \in \mathcal{A}(B, C)$ is the identity element.
\end{proposition}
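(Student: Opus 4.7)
The plan is to verify both identities component-wise in the inverse limit $\mathcal{A}(B,C) = \varprojlim \frac{A}{\frak{n}A + A\frak{m}}$, deducing the identity-morphism claim as the special case $\gamma = 1$. Fix $\frak{m} \in \fwords{B}$ and $\frak{l} \in \fwords{C}$. Using Lemma \ref{doublecosetiso} and closure of $\fwords{C}$ under multiplication, I would choose $\frak{n} \in \fwords{C}$ satisfying \emph{simultaneously} $\frak{n} \subseteq \frak{l}$, $\frac{A}{\frak{l}A + A\frak{n}} \cong (\frac{A}{\frak{l}A})(C)$, and $\frac{A}{\frak{n}A + A\frak{m}} \cong (\frac{A}{A\frak{m}})(C)$; any $\frak{n}_0$ furnished by the lemma can be replaced by $\frak{n}_0 \frak{l} \in \fwords{C}$ to secure the containment.

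The core observation is that for $\gamma \in \Gamma$ the image $\gamma + \frak{l}A$ already lies in the block space $(\frac{A}{\frak{l}A})(C)$: since $\frak{l}$ is a two-sided ideal of $\Gamma$ (being a product of two-sided ideals in $B$), $\gamma \frak{l} \subseteq \frak{l}$, so $(\gamma + \frak{l}A)\frak{l} = 0$ in $A/\frak{l}A$ and this element is killed by $\frak{l} \in \fwords{C}$. Consequently, in the composition formula of Definition \ref{def:comp} applied to $(\gamma) \circ \alpha$, I may take $b_0 = \gamma$. Choosing any $a_0$ with $\alpha_{\frak{m},\frak{n}} = a_0 + \frak{n}A + A\frak{m}$ and $a_0 + A\frak{m} \in (\frac{A}{A\frak{m}})(C)$, the definition yields $((\gamma) \circ \alpha)_{\frak{m},\frak{l}} = \gamma a_0 + \frak{l}A + A\frak{m}$. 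By the definition of the left $\Gamma$-action on the limit, $(\gamma.\alpha)_{\frak{m},\frak{l}} = \gamma \cdot \alpha_{\frak{m},\frak{l}}$; since $\frak{n} \subseteq \frak{l}$, the projection $\frac{A}{\frak{n}A + A\frak{m}} \twoheadrightarrow \frac{A}{\frak{l}A + A\frak{m}}$ sends $\alpha_{\frak{m},\frak{n}}$ to $\alpha_{\frak{m},\frak{l}}$ by compatibility in the limit, so $a_0$ also represents $\alpha_{\frak{m},\frak{l}}$. Finally, $\gamma(\frak{l}A + A\frak{m}) \subseteq \frak{l}A + A\frak{m}$ (using $\gamma \frak{l} \subseteq \frak{l}$ and that $A\frak{m}$ is a left ideal of $A$), so $\gamma a_0 \equiv \gamma \alpha_{\frak{m},\frak{l}} \pmod{\frak{l}A + A\frak{m}}$, giving the desired equality at the $(\frak{m},\frak{l})$-component.

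The identity $\alpha.\gamma = \alpha \circ (\gamma)$ follows by the mirror argument: the middle object is now $B$, so I pick $\frak{n} \in \fwords{B}$ contained in $\frak{m}$ and satisfying the analogs of Lemma \ref{doublecosetiso}; I take $a_0 = \gamma$, verifying $\gamma + A\frak{m} \in (\frac{A}{A\frak{m}})(B)$ via $\frak{m}\gamma \subseteq \frak{m}$; and the stability $(\frak{l}A + A\frak{m})\gamma \subseteq \frak{l}A + A\frak{m}$ replaces the left-action stability used above. Specializing $\gamma = 1$ in the two identities yields $(1)_B \circ \alpha = 1.\alpha = \alpha$ and $\alpha \circ (1)_C = \alpha.1 = \alpha$, so $(1) \in \mathcal{A}(B,B)$ serves as the identity morphism at each object $B$, completing the categorical axioms (associativity having been handled by the previous proposition).

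The main obstacle is the joint selection of the middle ideal $\frak{n}$. The composition formula is a priori only available for $\frak{n}$ satisfying the block-isomorphism conditions of Lemma \ref{doublecosetiso}; enforcing the further containment $\frak{n} \subseteq \frak{l}$ (or $\frak{n} \subseteq \frak{m}$) is what lets me identify $a_0$ with a representative of $\alpha_{\frak{m},\frak{l}}$ in the left-action case (respectively $b_0$ with a representative of $\alpha_{\frak{m},\frak{l}}$ in the right-action case). This is resolved precisely because $\fwords{B}$ and $\fwords{C}$ are closed under products, so any initial choice can always be shrunk.
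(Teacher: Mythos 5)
Your argument is correct and follows essentially the same route as the paper: pick $\frak{n}$ as in Lemma \ref{doublecosetiso}, observe that $\gamma + \frak{l}A$ already lies in $(\frac{A}{\frak{l}A})(C)$ so one may take $b_0 = \gamma$ in the composition formula, and then match $((\gamma)\circ\alpha)_{\frak{m},\frak{l}} = \gamma a_0 + \frak{l}A + A\frak{m}$ against $(\gamma.\alpha)_{\frak{m},\frak{l}}$. You usefully make explicit a step the paper leaves implicit, namely that $\frak{n}$ should be shrunk (e.g.\ replaced by $\frak{n}\frak{l}$) so that $\frak{n}\subseteq\frak{l}$; this is what lets you conclude $a_0 + \frak{l}A + A\frak{m} = \alpha_{\frak{m},\frak{l}}$ by limit compatibility, and it is legitimate because the well-definedness of the composition (proved just before) permits any sufficiently small $\frak{n}$ satisfying Lemma \ref{doublecosetiso}. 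Two small slips that do not affect the logic: you describe $\frak{l}$ as a product of ideals ``in $B$'' when it is in $C$; and in the final sentence the sides are transposed — for $\alpha\in\mathcal{A}(B,C)$ the left identity is $(1)\in\mathcal{A}(C,C)$ (so $(1)_C\circ\alpha = \alpha$) and the right identity is $(1)\in\mathcal{A}(B,B)$ (so $\alpha\circ(1)_B=\alpha$).
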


\begin{proof}
Let $\gamma \in \Gamma$, $\alpha \in \mathcal{A}(B, C)$, $\frak{m} \in \fwords{B}$ and $\frak{l} \in \fwords{C}$. Pick $\frak{n} \in \fwords{C}$ as in Lemma \ref{doublecosetiso}. We also pick $a_0$ so that:

\begin{align*}
    \alpha_{\frak{m},\frak{n}} &= a_0 + \frak{n}A + A \frak{m} & & a_0 + A \frak{m} \in (\frac{A}{A \frak{m}})(C)
\end{align*}
Since $\gamma \in \Gamma$, we already have $\gamma + \frak{l} A \in (\frac{A}{\frak{l} A})(C)$.
So $\big((\gamma) \circ \alpha \big)_{\frak{m},\frak{l}} = \gamma a_0 + \frak{l} A + A \frak{m} = \gamma .\big(a_0 + \frak{l} A + A \frak{m} \big)$, and $(\gamma) \circ \alpha = \gamma.\alpha$. The right action follows similarly.

\end{proof}

\begin{proposition}\label{associativity}
The composition $\mathcal{A}(C, D) \otimes_\Gamma \mathcal{A}(B, C) \rightarrow \mathcal{A}(B, D)$ is associative.
\end{proposition}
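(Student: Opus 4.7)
Fix $\alpha \in \mathcal{A}(B,C)$, $\beta \in \mathcal{A}(C,D)$, $\gamma \in \mathcal{A}(D,E)$, and arbitrary $\frak{m} \in \fwords{B}$ and $\frak{p} \in \fwords{E}$. The plan is to show that the $(\frak{m},\frak{p})$-components of $(\gamma \circ \beta) \circ \alpha$ and $\gamma \circ (\beta \circ \alpha)$ agree in $A/(\frak{p}A + A\frak{m})$; since $\frak{m}$ and $\frak{p}$ were arbitrary, this will establish associativity in the inverse limit $\mathcal{A}(B,E)$.

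The first step is bookkeeping. By Lemma \ref{doublecosetiso} (applied repeatedly) together with the fact that $\fwords{X}$ is closed under products, I can choose $\frak{n} \in \fwords{C}$ and $\frak{o} \in \fwords{D}$ simultaneously small enough that every block-space isomorphism I will need is available, namely
\[
\tfrac{A}{\frak{n}A + A\frak{m}} \cong (\tfrac{A}{A\frak{m}})(C),\quad \tfrac{A}{\frak{o}A+A\frak{n}} \cong (\tfrac{A}{A\frak{n}})(D) \cong (\tfrac{A}{\frak{o}A})(C),\quad \tfrac{A}{\frak{p}A+A\frak{o}} \cong (\tfrac{A}{A\frak{o}})(E) \cong (\tfrac{A}{\frak{p}A})(D),
\]
plus $\frac{A}{\frak{p}A+A\frak{n}} \cong (\frac{A}{\frak{p}A})(C)$ for the outer composition on the left-bracketed side. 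In particular, the underlying annihilation properties from the proof of Lemma \ref{doublecosetiso} hold: $\frak{n}$ annihilates $(A/A\frak{m})(C)$ on the left, $\frak{o}$ annihilates $(A/\frak{p}A)(D)$ on the right, and so on. I then pick lifts $a_0 \in A$ for $\alpha_{\frak{m},\frak{n}}$ with $a_0 + A\frak{m} \in (A/A\frak{m})(C)$, two (possibly distinct) lifts $b_0^{LD}, b_0^{RC} \in A$ for $\beta_{\frak{n},\frak{o}}$ with $b_0^{LD} + A\frak{n} \in (A/A\frak{n})(D)$ and $b_0^{RC} + \frak{o}A \in (A/\frak{o}A)(C)$, and $c_0 \in A$ for $\gamma_{\frak{o},\frak{p}}$ with $c_0 + \frak{p}A \in (A/\frak{p}A)(D)$.

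Next I compute each bracketing and reduce it to a three-fold product modulo $\frak{p}A + A\frak{m}$. For the left bracketing, $(\gamma \circ \beta)_{\frak{n},\frak{p}} = c_0 b_0^{LD} + \frak{p}A + A\frak{n}$, and if $e_0$ is the required lift (with $e_0 + \frak{p}A \in (A/\frak{p}A)(C)$), then $e_0 - c_0 b_0^{LD} \in \frak{p}A + A\frak{n}$. Multiplying by $a_0$ on the right and using $\frak{n} a_0 \subseteq A\frak{m}$ (which holds because $a_0$ lies in the $C$-block space of $A/A\frak{m}$) gives $(e_0 - c_0 b_0^{LD})a_0 \subseteq \frak{p}A + A\frak{m}$, so $((\gamma \circ \beta) \circ \alpha)_{\frak{m},\frak{p}} \equiv c_0 b_0^{LD} a_0 \pmod{\frak{p}A + A\frak{m}}$. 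For the right bracketing, $(\beta \circ \alpha)_{\frak{m},\frak{o}} = b_0^{RC} a_0 + \frak{o}A + A\frak{m}$, and any lift $d_0$ with $d_0 + A\frak{m} \in (A/A\frak{m})(D)$ differs from $b_0^{RC} a_0$ by an element of $\frak{o}A + A\frak{m}$. Multiplying by $c_0$ on the left and using $c_0 \frak{o} \subseteq \frak{p}A$ (because $c_0 + \frak{p}A$ lies in the $D$-block of $A/\frak{p}A$) gives $c_0 d_0 \equiv c_0 b_0^{RC} a_0 \pmod{\frak{p}A + A\frak{m}}$.

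The final step is to compare the two three-fold products. Their difference is $c_0(b_0^{LD} - b_0^{RC}) a_0$, with $b_0^{LD} - b_0^{RC} \in \frak{o}A + A\frak{n}$. Distributing gives two summands: $c_0 \frak{o} A a_0 \subseteq \frak{p}A$, using $c_0 \frak{o} \subseteq \frak{p}A$ again, and $c_0 A \frak{n} a_0 \subseteq A \frak{n} a_0 \subseteq A\frak{m}$, using $\frak{n} a_0 \subseteq A\frak{m}$ again. So both bracketings project to the same class $c_0 b_0^{LD} a_0 + \frak{p}A + A\frak{m} = c_0 b_0^{RC} a_0 + \frak{p}A + A\frak{m}$, completing the proof. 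The main obstacle is purely bookkeeping: choosing $\frak{n},\frak{o}$ fine enough to have all the block-space isomorphisms available at once, and tracking which ``side'' of which block each lift lives in so that the annihilation properties of Lemma \ref{doublecosetiso} can be invoked cleanly. No new ingredients beyond Lemma \ref{doublecosetiso} and the fact that $(A/\frak{l}A)(X)\cdot\frak{l}' = 0$ / $\frak{l}'\cdot(A/A\frak{l})(X) = 0$ for suitable $\frak{l}'$ are required.
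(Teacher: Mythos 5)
Your overall computation is fine, but there is a genuine gap at the very first step, where you claim that $\frak{n} \in \fwords{C}$ and $\frak{o} \in \fwords{D}$ can be chosen ``simultaneously small enough'' so that all the required block-space isomorphisms hold at once. Among the conditions you need are that $\frak{o}$ annihilates $(A/A\frak{n})(D)$ on the left (so that $(\gamma \circ \beta)_{\frak{n},\frak{p}}$ can be computed with intermediate $\frak{o}$, with the lift $b_0^{LD}$ having $b_0^{LD} + A\frak{n}$ in the $D$-block of $A/A\frak{n}$) and that $\frak{n}$ annihilates $(A/\frak{o}A)(C)$ on the right (so that $(\beta \circ \alpha)_{\frak{m},\frak{o}}$ can be computed with intermediate $\frak{n}$, with the lift $b_0^{RC}$ having $b_0^{RC} + \frak{o}A$ in the $C$-block of $A/\frak{o}A$). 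These two constraints are mutually dependent: shrinking $\frak{n}$ enlarges $(A/A\frak{n})(D)$, which may force $\frak{o}$ to shrink, which in turn enlarges $(A/\frak{o}A)(C)$, which may force $\frak{n}$ to shrink again, and so on. You give no fixed-point or termination argument, and Lemma \ref{doublecosetiso} only resolves a non-circular version of this problem (the constraints there on the single chosen $\frak{n}$ refer only to the fixed $\frak{m}$ and $\frak{l}$, never to $\frak{n}$ itself), so ``applying it repeatedly'' does not obviously close the loop.

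The paper's proof sidesteps this by \emph{not} using the same intermediate ideals for both bracketings. It selects $\frak{l} \in \fwords{D}$ (for the outer layer of $\delta \circ (\beta\circ\alpha)$), then $\frak{n} \in \fwords{C}$ (for its inner layer), then $\frak{n}' \subseteq \frak{n}$ (for the outer layer of $(\delta\circ\beta)\circ\alpha$), then $\frak{l}' \subseteq \frak{l}$ (for its inner layer) --- each choice depends only on previously fixed ideals, so no circularity arises. This produces distinct lifts such as $a_0$ for $\alpha_{\frak{m},\frak{n}}$ and $x_0$ for $\alpha_{\frak{m},\frak{n}'}$, related by $x_0 - a_0 \in \frak{n}A + A\frak{m}$, and the two triple products are compared via a telescoping chain of five such congruences. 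You can repair your argument in the same way: allow the intermediate ideals for the two bracketings to differ, relate the corresponding lifts via the inverse-limit compatibility, and track several congruence differences rather than the single one $b_0^{LD} - b_0^{RC}$. The ``two lifts of $\beta$'' idea is a nice observation, but it handles only the discrepancy on the middle factor, not the deeper discrepancy caused by needing different refinement ideals altogether.
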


\begin{proof}
Let $\delta \in \mathcal{A}(D, P)$, $\beta \in \mathcal{A}(C, D)$, and $\alpha \in \mathcal{A}(B, C)$. Fix $\frak{m} \in \fwords{B}$ and $\frak{p} \in \fwords{P}$.
Pick $\frak{l} \in \fwords{D}$ so that:
\begin{align*}
    \delta_{\frak{l},\frak{p}} &= d_0 + \frak{p} A + A \frak{l} & & d_0 + \frak{p} A \in (\frac{A}{\frak{p} A})(D)\\
    (\beta \circ \alpha)_{\frak{m},\frak{l}} &= c_0 + \frak{l} A + A \frak{m} & &  c_0 + A \frak{m} \in (\frac{A}{A \frak{m}})(D)\\
    (\delta \circ (\beta \circ \alpha))_{\frak{m},\frak{p}} &= d_0c_0 + \frak{p} A + A \frak{m}
\end{align*}
Then pick $\frak{n} \in \fwords{C}$ so that:
\begin{align*}
    \beta_{\frak{n},\frak{l}} &= b_0 + \frak{l} A + A \frak{n} & & b_0 + \frak{l} A \in (\frac{A}{\frak{l} A})(C)\\
    \alpha_{\frak{m},\frak{n}} &= a_0 + \frak{n} A + A \frak{m} & & a_0 + A \frak{m} \in (\frac{A}{A \frak{m}})(C)\\
    (\beta \circ \alpha)_{\frak{m},\frak{l}} &= b_0a_0 + \frak{l} A + A \frak{m}
\end{align*}
Now pick $\frak{n}' \in \fwords{C}$ with $\frak{n}' \subseteq \frak{n}$ so that:
\begin{align*}
    (\delta \circ \beta)_{\frak{n}',\frak{p}} &= w_0 + \frak{p} A + A \frak{n}' & &  w_0 + \frak{p} A \in (\frac{A}{\frak{p} A})(C)\\
    \alpha_{\frak{m},\frak{n}'} &= x_0 + \frak{n}' A + A \frak{m} & & x_0 + A \frak{m} \in (\frac{A}{A \frak{m}})(C)\\
    ((\delta \circ \beta) \circ \alpha)_{\frak{m},\frak{p}} &= w_0x_0 + \frak{p} A + A \frak{m}
\end{align*}
Then pick $\frak{l}' \in \fwords{D}$ with $\frak{l}' \subseteq \frak{l}$ so that:
\begin{align*}
    \delta_{\frak{l}',\frak{p}} &= z_0 + \frak{p} A + A \frak{l}' & & z_0 + \frak{p} A \in (\frac{A}{\frak{p} A})(D)\\
    \beta_{\frak{n}',\frak{l}'} &= y_0 + \frak{l}' A + A \frak{n}' & & y_0 + A \frak{n}' \in (\frac{A}{A \frak{n}'})(D)\\
    (\delta \circ \beta)_{\frak{n}',\frak{p}} &= z_0y_0 + \frak{p} A + A \frak{n}'
\end{align*}
We will make use of the following list of congruences:
\begin{align*}
    z_0 - d_0 &\in \frak{p} A + A \frak{l} \\
    y_0 - b_0 &\in \frak{l} A + A \frak{n} \\
    x_0 - a_0 &\in \frak{n} A + A \frak{m} \\
    c_0 - b_0a_0 &\in \frak{l} A + A \frak{m} \\
    w_0 - z_0y_0 &\in \frak{p} A + A \frak{n}
\end{align*}
From these, we deduce the congruences:
\begin{align*}
    (d_0 - z_0)c_0 \in \frak{p} A + A \frak{m}\\
    z_0(c_0 - b_0a_0) \in \frak{p} A + A \frak{m} \\
    z_0b_0(a_0 - x_0)\in \frak{p} A + A \frak{m}\\
    z_0(b_0 - y_0)x_0 \in \frak{p} A + A \frak{m}\\
    (z_0y_0 - w_0)x_0 \in \frak{p} A + A \frak{m}
\end{align*}
Thus we have the associativity of composition:
$$(\delta \circ (\beta \circ \alpha))_{\frak{m},\frak{p}} = d_0c_0 +\frak{p} A + A \frak{m} = w_0x_0 +\frak{p} A + A \frak{m} = ((\delta \circ \beta) \circ \alpha)_{\frak{m},\frak{p}}$$ for each $\frak{m} \in \fwords{B}$ and $\frak{p} \in \fwords{P}$.
\end{proof}

\bibliographystyle{siam}
\bibliography{refs}

\end{document}